\definecolor{myred}{HTML}{c20014}
\definecolor{mygreen}{HTML}{008000}
\setlist{noitemsep}
\crefname{equation}{}{}
\Crefname{equation}{Equation}{Equations}
\numberwithin{equation}{section}
\newtheorem{theo}{Theorem}[section]
\newtheorem{prop}[theo]{Proposition}
\newtheorem{coro}[theo]{Corollary}
\newtheorem{lemma}[theo]{Lemma}
\theoremstyle{definition}
\newtheorem{defi}[theo]{Definition}
\theoremstyle{remark}
\newtheorem{rem}[theo]{Remark}
\newtheorem{as}[theo]{Assumption}
\crefname{theo}{Theorem}{Theorems}
\crefname{claim}{Claim}{Claims}
\crefname{defi}{Definition}{Definitions}
\crefname{enumi}{}{}
\Crefname{enumi}{Item}{Items}
\crefname{rem}{Remark}{Remarks}
\crefname{prop}{Proposition}{Propositions}
\newcommand{\dt}[1]{\frac{\mathrm{d}#1}{\mathrm{d}t}}
\newcommand{\dl}[2]{\frac{\partial#1}{\partial#2}}
\newcommand{\ddl}[2]{\frac{\partial^2#1}{\partial#2^2}}
\newcommand{\dtt}[1]{\frac{\mathrm{d}^2#1}{\mathrm{d} t^2}}
\newcommand{\D}{\mathcal{D}}
\newcommand{\A}{\mathcal{A}}
\newcommand{\B}{\mathcal{B}}
\renewcommand{\d}{\mathrm{d}}
\renewcommand{\H}{\mathcal{H}}
\renewcommand{\L}{\mathcal{L}}
\renewcommand{\S}{\mathcal{S}}
\newcommand{\R}{\mathbb{R}}
\newcommand{\N}{\mathbb{N}}
\newcommand{\C}{\mathcal{C}}
\newcommand{\Z}{\mathbb{Z}}
\renewcommand{\i}{\mathrm{i}}
\DeclareMathOperator{\re}{Re}
\DeclareMathOperator{\im}{Im}
\DeclareMathOperator{\dive}{div}
\newcommand{\Co}{\mathbb{C}}
\newcommand{\F}{\mathcal{F}}
\newcommand{\dom}{\operatorname{dom}}
\DeclareMathOperator{\supp}{supp}
\newcommand{\Chi}{\mathrm{X}}
\newcommand{\Oph}{\operatorname{Op}_h}
\newcommand{\Opht}{\operatorname{Op}_h^\tau
}
\newcommand{\diff}{\operatorname{Diff}}
\newcommand{\cha}{\operatorname{char}}
\newcommand{\uD}{\mathrm{1D}}
\providecommand{\keywords}[1]
{
  \small	
  \textbf{{Keywords.}} #1
}
\providecommand{\AMS}[1]
{
  \small	
  \textbf{{AMS subject classifications.}} #1
}
\renewcommand{\leq}{\leqslant}
\renewcommand{\geq}{\geqslant}
\renewcommand{\epsilon}{\varepsilon}
\newcommand{\eps}{\varepsilon}
\title{Optimal stabilization rate for the wave equation with hyperbolic boundary condition\thanks{This work was supported by the Research Council of Finland Grant number 349002 and Agence Nationale de la Recherche, QuBiCCS project ANR-24-CE40-3008. The authors would like to thank Lassi Paunonen for arranging a visit of the first author at Tampere University in June 2025.}}
\author{Hugo Parada\thanks{Universit\'e de Lorraine, CNRS, Inria, IECL, F-54000 Nancy, France. Email: \nolinkurl{hugo.parada@inria.fr}.} \and Nicolas Vanspranghe\thanks{Université Paris-Saclay, CNRS, CentraleSupélec, Inria, Laboratoire des signaux et systèmes, 91190 Gif-sur-Yvette, France. Email: \nolinkurl{nicolas.vanspranghe@centralesupelec.fr}.}\footnote{Corresponding author.}}
\begin{document}

\maketitle

\abstract{
We show that the energy of classical solutions to the wave equation with  hyperbolic boundary condition (i.e., dynamic Wentzell boundary condition) and damping on the boundary decays like $1/t$. In fact we allow mixed boundary conditions: a possibly empty, disjoint part of the boundary may be kept at rest provided that the dynamic part satisfies the geometric control condition. We also prove that this decay rate is sharp.
Our results follow from resolvent estimates, which we establish by studying high-frequency quasimodes.
}

\keywords{Wave equation, dynamic boundary conditions, stabilization, non-uniform stability, semiclassical analysis.}

\AMS{35L05, 35L20, 35B35, 35B40, 93C20.}

\tableofcontents

\section{Introduction}

\subsection{Main result}

\label{sec:intro}
Let $\Omega$ be a smooth bounded domain in $\R^d$, $d \geq 2$. Consider a nonempty relatively open subset $\Gamma_0$ of $\Gamma$ such that, letting $\Gamma_1 \triangleq \Gamma \setminus \Gamma_0$,
$
\overline{\Gamma_0} \cap \overline{\Gamma_1} = \emptyset
$. In other words, either  $\Gamma_1$ is empty, or $\Gamma$ is made of two disjoint components $\Gamma_0$ and $\Gamma_1$.
In this paper we are interested in the initial boundary value problem
\begin{subequations} 
\label{eq:IBVP}
\begin{align}
\label{eq:wave-equation}
&(\partial_t^2 - \Delta)u = 0&&\mbox{in}~\Omega \times (0, +\infty), \\
\label{eq:HBC}
&(\partial_t^2 + \partial_t - \Delta_\Gamma)u = - \partial_\nu u &&\mbox{on}~\Gamma_0 \times (0, +\infty), \\
\label{eq:dirichlet}
&u = 0 &&\mbox{on}~\Gamma_1 \times (0, +\infty), \\
&(u, \partial_t u)|_{t = 0} = (u_0, u_1) &&\mbox{in}~{\Omega},
\end{align}
\end{subequations}
where $\partial_\nu$ is the outward normal derivative, $\Delta$ is the usual Laplacian, and $\Delta_\Gamma$ is the Laplace--Beltrami operator on $\Gamma$ equipped with the canonical Riemannian metric induced by $\R^d$; see \cref{sec:laplace-beltrami} for a definition. \Cref{eq:HBC} represents a \emph{hyperbolic} boundary condition, or \emph{dynamic} {Wentzell}\footnote{Also spelled ``Ventcel'', ``Venttsel'{''}, etc.} boundary condition, subject to viscous damping or dissipative feedback action. The \emph{energy} of solutions $u$ to \cref{eq:IBVP} is given by
\begin{equation}
\label{eq:energy}
E(u, t) \triangleq \frac{1}{2}\int_\Omega |\nabla u|^2 + |\partial_t u|^2 \, \d x + \frac{1}{2} \int_{\Gamma_0} |\nabla_\Gamma u|^2 + |\partial_t u|^2 \, \d \sigma, \quad t \geq 0,
\end{equation}
where $\d x$ denotes the standard Lebesgue measure on $\R^d$, $\d \sigma$ denotes the induced surface measure on $\Gamma$, and $\nabla_\Gamma$ is the Riemannian gradient on $\Gamma$; see \cref{sec:laplace-beltrami} for further details. Solutions $u$ to \cref{eq:IBVP} satisfy, at least formally, the \emph{energy identity}
\begin{equation}
\label{eq:energy-id}
E(u,t) \big|_0^T = - \iint_{\Gamma_0 \times (0, T)} |\partial_t u|^2 \, \d \sigma \, \d t, \quad T \geq 0,
\end{equation}
and in particular the energy is nonincreasing. In fact, a standard argument based on unique continuation will show that $E(u,t) \to 0$ as $t \to + \infty$ for all finite-energy solutions $u$ to \cref{eq:IBVP}; see \cref{sec:wp-as}. We are interested in \emph{quantified} rates of decay, and our main result reads as follows.
\begin{as}[Dynamic hypotheses] 
\label{as:dyn}
No \emph{generalized bicharacteristic} (see \cref{sec:gcc}) has contact of infinite order with $\Gamma \times (0, +\infty)$, as stated in \Cref{ass:finite-contact}, and $\Gamma_0$ satisfies the \emph{geometric control condition} \cite{BarLeb92}, as in \cref{def:gcc}.
\end{as}
\begin{theo}[Energy decay rate]
\label{th:intro} Suppose that \cref{as:dyn} holds.
Let $(u_0, u_1)\in H^{2}(\Omega)\times H^{1}(\Omega)$ be such that $(u_{0},u_{1})|_{\Gamma_0} \in H^{2}(\Gamma_0)\times H^{1}(\Gamma_0)$ and $(u_0, u_1)|_{\Gamma_1} = 0$, and let $u$ the unique solution of \cref{eq:IBVP} with initial data $(u_0, u_1)$. Then, 
\begin{equation}
E(u,t)=o(t^{-1}), \quad t\to +\infty.
\end{equation}
\end{theo}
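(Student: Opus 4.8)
The plan is to deduce the $o(t^{-1})$ decay from a resolvent estimate via the Borichev--Tomilov theorem, which is the standard bridge between non-uniform resolvent bounds and polynomial (or little-$o$ polynomial) energy decay for semigroups. Concretely, I would first recast \cref{eq:IBVP} as an abstract Cauchy problem $\dot{U} = \A U$ on an energy Hilbert space $\H$ encoding the states $(u, \partial_t u, u|_{\Gamma_0}, \partial_t u|_{\Gamma_0})$ with the norm whose square is $2E$; the well-posedness and generation of a contraction $C_0$-semigroup $\sg{e^{t\A}}$ are presumably established in \cref{sec:wp-as}, together with $i\R \subset \rho(\A)$ (the latter following from the unique continuation / strong stability argument already announced). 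The domain $\dom(\A)$ is exactly the regularity class $(u_0,u_1)$ in the statement, so it suffices to show $E(u,t) = o(t^{-1})$ for data in $\dom(\A)$.

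Given that framework, the key analytic input is a high-frequency resolvent estimate of the form
\begin{equation}
\label{eq:resolvent-plan}
\|(i\lambda - \A)^{-1}\|_{\L(\H)} = O(|\lambda|), \quad |\lambda| \to +\infty,
\end{equation}
or more precisely $\|(i\lambda - \A)^{-1}\|_{\L(\H)} = o(|\lambda|)$ uniformly as $|\lambda| \to \infty$ is what yields the little-$o$ refinement. By Borichev--Tomilov, the bound $\|(i\lambda-\A)^{-1}\| = O(|\lambda|^{\alpha})$ on a bounded-resolvent semigroup gives $\|e^{t\A}U_0\|_{\H} = O(t^{-1/\alpha}\|U_0\|_{\dom(\A)})$; here $\alpha = 1$ produces $t^{-1}$ in norm-squared energy, hence the $o(t^{-1})$ once \eqref{eq:resolvent-plan} is upgraded to little-$o$. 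I would establish \eqref{eq:resolvent-plan} by contradiction: assume there exist $\lambda_n \to \infty$ and $U_n = (u_n, v_n, \ldots) \in \dom(\A)$ with $\|U_n\|_{\H} = 1$ and $\|(i\lambda_n - \A)U_n\|_{\H} = o(|\lambda_n|^{-1})$, i.e.\ $\lambda_n (i\lambda_n - \A) U_n \to 0$. Writing out the components, this says $u_n$ is an approximate quasimode for $-\Delta$ at frequency $\lambda_n^2$ satisfying an approximate hyperbolic boundary condition on $\Gamma_0$ and Dirichlet on $\Gamma_1$, with the dissipation term $\lambda_n \|\partial_t u_n\|_{L^2(\Gamma_0)}^2 \to 0$ extracted from $\re\langle \lambda_n(i\lambda_n-\A)U_n, U_n\rangle$. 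Semiclassical defect measure / microlocal propagation arguments — propagation along generalized bicharacteristics, which is legitimate by the finite-contact assumption in \cref{as:dyn} — then force the defect measure of $(u_n)$ to vanish near $\Gamma_0$, and the geometric control condition propagates this to the whole of $\overline{\Omega}$, contradicting $\|U_n\|_{\H} = 1$. One must be careful to also control the boundary components $u_n|_{\Gamma_0}$ and $\partial_t u_n|_{\Gamma_0}$; the hyperbolic boundary condition couples them to the interior trace via $-\partial_\nu u_n$, so their smallness should follow once the interior mass is shown to concentrate away from $\Gamma$ and then to vanish.

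The main obstacle is the resolvent estimate \eqref{eq:resolvent-plan}, and within it the delicate interplay at the boundary $\Gamma_0$ where the dynamic Wentzell condition lives: unlike the classical damped wave equation, the ``damping'' here acts on a boundary that itself carries a wave-type dynamics (the $\partial_t^2 - \Delta_\Gamma$ operator), so the usual boundary-damping machinery does not apply verbatim. One has to understand the microlocal structure of solutions near $\Gamma_0$ — in particular distinguishing hyperbolic, glancing, and elliptic points, and checking that the tangential operator $\Delta_\Gamma$ together with the factor $\lambda_n$ in the boundary condition produces genuine absorption at exactly the rate needed to lose only one power of $\lambda$ (this is what makes the $t^{-1}$ rate, rather than a faster or slower one, the natural one — and the sharpness claimed in the abstract confirms the exponent cannot be improved). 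I would expect the bulk of the technical work, carried out via the semiclassical quasimode analysis announced in the abstract, to go into: (i) the trace and elliptic-regularity estimates showing the boundary energy is controlled by the interior near elliptic/glancing points; (ii) the defect-measure propagation through $\Gamma_0$ using \cref{ass:finite-contact}; and (iii) closing the contradiction using \cref{def:gcc}. Finally, the upgrade from $O(|\lambda|)$ to $o(|\lambda|)$ — needed for the little-$o$ in time — is itself a compactness argument on the (already proven) bound, of the Borichev--Tomilov ``$o$-version'' type, and should be routine once the $O$-estimate is in hand.
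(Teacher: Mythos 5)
Your overall architecture — recast \cref{eq:IBVP} as a semigroup problem on an energy space, deduce the $o(t^{-1})$ decay from a high-frequency resolvent estimate via Borichev--Tomilov, and prove the resolvent bound by a quasimode contradiction under \cref{as:dyn} — is indeed the paper's. Two specific points of your plan are off, one minor and one substantial.

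Minor: the little-$o$ decay for classical data does \emph{not} require upgrading the resolvent bound from $O(|\lambda|)$ to $o(|\lambda|)$. Borichev--Tomilov (\cite[Theorem~2.4]{BorTom10}) already equates $\|(\i\lambda - \A)^{-1}\| = O(|\lambda|)$ with both the uniform $O(t^{-1})$ decay of $\|e^{-t\A}\A^{-1}\|$ and the orbitwise $o(t^{-1})$ decay of $\|e^{-t\A}\A^{-1}x\|$ for each fixed $x$. The paper proves only the $O(|\lambda|)$ bound (\cref{prop:res}) and reads off the $o$-improvement directly (see the proof of \cref{th:poly-stab}). In fact the $o(|\lambda|)$ resolvent bound is \emph{false} here — \cref{coro:qm-bessel} exhibits quasimodes saturating the $O(|\lambda|)$ rate — so the ``routine upgrade'' you suggest is unavailable.

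Substantial: the mechanism by which you propose to close the quasimode contradiction — semiclassical defect measures propagating through the Wentzell boundary $\Gamma_0$ — is precisely what the paper deliberately avoids and what you do not supply. There is no off-the-shelf reflection or propagation theory for defect measures at a boundary carrying a dynamic Wentzell condition $\partial_t^2 - \Delta_\Gamma$, and your sketch leaves this as an unsubstantiated assertion. The paper instead sidesteps the issue via a \emph{decoupling argument} (\cref{sec:decoupling}). It first controls the Dirichlet and Neumann traces of the quasimode on $\Gamma_0$ in two tangential-frequency regimes using tangential pseudodifferential calculus and a G{\aa}rding inequality (\cref{prop:HF,prop:LF,prop:improved-neumann}), not defect measures. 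It then writes $u = u_1 + u_2$, where $u_1$ lifts the Dirichlet data $u|_\Gamma$ by solving a damped Helmholtz problem (\cref{lem:dirichlet-pb,prop:diri-neu}), so that $u_2 = u - u_1$ has homogeneous Dirichlet trace. Only at this point is \cref{as:dyn} invoked, through the standard Bardos--Lebeau--Rauch observability estimate in Hautus-test form applied to $u_2$ (\cref{lem:hautus,prop:boundary-qm}), used as a black box for the homogeneous Dirichlet problem. This decoupling is the key structural idea that makes the geometric control condition exploitable; without it — or a genuine propagation result through Wentzell boundaries, which you would have to prove — your contradiction argument does not close.
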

(The geometric control condition is automatically satisfied when $\Gamma_1$ is empty; on the other hand the finite-order contact condition is always satisfied when $\Gamma$ is locally real-analytic.) The decay rate of \cref{th:intro}, which is restated in more detail as \cref{th:poly-stab} below, cannot be improved, at least in the polynomial scale. 
\begin{theo}[Lower bound on decay,  disk case]\label{th:intro-sharp} 
Suppose that $\Omega$ is the open unit disk $\mathbb{D} \triangleq \{x \in \R^2 : |x| < 1\}$, in which case $\Gamma = \Gamma_0 = \partial \mathbb{D}$. Then,
for all $\eps >0$ there exist  initial data $(u_0, u_1)\in H^{2}(\Omega)\times H^{1}(\Omega)$ with $(u_{0},u_{1})|_{\Gamma} \in H^{2}(\Gamma)\times H^{1}(\Gamma)$ 
such that the corresponding solution $u$ to \cref{eq:IBVP} satisfies
\begin{equation}
\label{eq:op-sup}
\sup_{t \geq 0} t^{1+\eps} E(u, t) = + \infty.
\end{equation}
\end{theo}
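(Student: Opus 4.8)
The strategy is the standard one for establishing sharpness of a polynomial decay rate: construct an explicit family of approximate eigenfunctions (quasimodes) concentrating at high frequency on the boundary, and feed them into the necessity half of the Borichev--Tomilov characterization of polynomial stability. Concretely, in the disk we separate variables: look for solutions of the stationary problem of the form $u_n(r,\theta) = f_n(r) e^{in\theta}$, where $f_n$ solves a Bessel-type ODE on $(0,1)$, and the hyperbolic boundary condition~\eqref{eq:HBC} — after the substitution $\partial_t \mapsto i\lambda_n$ — becomes the dispersion relation coupling $\lambda_n$, $n$, and the normal derivative $f_n'(1)$. I would work in the regime where $|\lambda_n|$ and $n$ are comparable and both large (the "whispering gallery" regime), so that $f_n$ is exponentially small in the interior and concentrated in an $O(n^{-1/3})$-neighborhood of $\Gamma$; this is exactly the geometric-optics obstruction that the Wentzell condition introduces.

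The key computation is to show that one can choose $\lambda_n \in \mathbb{C}$ with $\re \lambda_n \asymp n$, $|\im \lambda_n| \asymp 1/\re\lambda_n$ (so the damping produces only an $O(1/\lambda)$ imaginary part, not better), and an associated genuine eigenfunction — or at worst an $O(\lambda_n^{-2})$-quasimode — $U_n$ of the generator $\A$ of the semigroup, normalized so that $\|U_n\|_{\mathcal{H}} \asymp 1$ while $\|(\i\lambda_n - \A)U_n\|_{\mathcal{H}} = o(\lambda_n^{-1})$, or dually such that the resolvent satisfies $\|(\i\lambda_n - \A)^{-1}\|_{\mathcal{L}(\mathcal{H})} \gtrsim \lambda_n^{1-\delta}$ for every $\delta>0$ along the sequence. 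Since $\Omega=\mathbb{D}$, all of this is explicit: $f_n$ is (a rescaling of) the Bessel function $J_n$, the eigenvalue condition is $\lambda J_n'(\lambda)/J_n(\lambda) = -(\lambda^2 - \i\lambda - n^2)$ (with the Laplace--Beltrami eigenvalue $-n^2$ on $\Gamma=\mathbb{S}^1$), and one uses the classical Airy-type asymptotics of $J_n$ and $J_n'$ near their turning point $r=n/\lambda$ to solve this relation asymptotically. One then checks by direct integration that $\|U_n\|_{\mathcal{H}}^2$ is dominated by the boundary contribution $\int_{\Gamma_0}|\nabla_\Gamma u_n|^2 + \lambda_n^2|u_n|^2 \asymp n^2|f_n(1)|^2$, while the interior $L^2$-mass is smaller, giving the claimed normalization and showing the damping term $\int_{\Gamma_0}|\partial_t u_n|^2 \asymp |\im\lambda_n|\cdot\|U_n\|^2$ is of the right (small) size.

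From such a sequence, the conclusion follows from a now-classical argument: if one had $E(u,t) = O(t^{-(1+\eps)})$ for all admissible data, then by interpolation and the Borichev--Tomilov theorem the resolvent would satisfy $\|(\i s - \A)^{-1}\| = O(|s|^{2/(1+\eps)})$ as $|s|\to\infty$, with $2/(1+\eps) < 2$; but the quasimode sequence forces $\|(\i\lambda_n-\A)^{-1}\|\gtrsim |\lambda_n|^{2-\delta}$ for all $\delta$, a contradiction once $\delta$ is chosen with $2-\delta > 2/(1+\eps)$. A standard uniform-boundedness / resonance-of-sequences argument then upgrades "no uniform rate $t^{-(1+\eps)}$" to the existence of a \emph{single} solution violating $\sup_t t^{1+\eps}E(u,t) < \infty$, i.e.\ to~\eqref{eq:op-sup}, after checking that a quasimode can be promoted to data in the required regularity class $H^2(\Omega)\times H^1(\Omega)$ with $H^2(\Gamma)\times H^1(\Gamma)$ traces (the explicit Bessel profiles are smooth, so this is just bookkeeping of norms).

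The main obstacle is the asymptotic analysis of the dispersion relation: one must track the Airy asymptotics of $J_n$, $J_n'$ at the turning point precisely enough to pin down $\re\lambda_n \asymp n$ together with the \emph{sharp size} $|\im\lambda_n|\asymp (\re\lambda_n)^{-1}$ of the imaginary part — a cruder estimate would only yield a weaker lower bound. A secondary technical point is justifying that the approximate eigenfunctions are close enough to true eigenfunctions (or directly estimating the resolvent against them) and that the boundary energy genuinely dominates, so that the normalization constants behave as claimed; this requires care with the $r\to 0$ behavior of $f_n$ and with the precise weights in the energy space $\mathcal{H}$.
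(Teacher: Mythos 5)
Your overall framework is the right one and matches the paper: seek eigenvalues $z=\i\lambda$ of the quadratic pencil $P$ in the disk via separation of variables, reduce to a dispersion relation for $J_n$ and $J_n'$, and feed a sequence of near-imaginary eigenvalues into the converse direction of Borichev--Tomilov / Anantharaman--L\'eautaud. The Rouch\'e-type perturbation argument you sketch for solving the dispersion relation is also how the paper proceeds. However, the \emph{regime} you pick is wrong, and this is not a detail but the crux of the matter.

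You propose to work with whispering-gallery modes, $n\asymp|\lambda_n|$, concentrated near $\Gamma$. The paper instead works with \emph{focusing} modes, $n=0$ and $\lambda_k$ near the $k$-th zero of $J_0$, which oscillate radially and concentrate near the origin; see Lemma~3.10 and Proposition~3.11. There the unperturbed balance is $\lambda^2 J_0(\lambda)\approx\lambda J_0'(\lambda)$, i.e.\ $J_0(\lambda)\approx 0$, and the $\i\lambda$ damping term produces a shift $\eps_k\approx 1/(\lambda_k-\i)$, giving
\begin{equation*}
\re\lambda_k\sim k\pi,\qquad \im\lambda_k\asymp(\re\lambda_k)^{-2}.
\end{equation*}
This $(\re\lambda)^{-2}$ imaginary part is exactly what is needed: via Corollary~3.12 it yields $\|P(\i\omega_k)^{-1}\|\gtrsim\omega_k$, which contradicts the $O(|\omega|^{1-\eps})$ bound that would follow from $\sup_t t^{1+\eps}E<\infty$ for \emph{every} $\eps>0$. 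Your claimed scaling $|\im\lambda_n|\asymp 1/\re\lambda_n$ is one power short: it gives only $\|P(\i\omega_n)^{-1}\|\gtrsim 1$, which does not contradict a resolvent bound $O(|\omega|^{1-\eps})$ for small $\eps$; you would at best rule out $\eps>1$, not all $\eps>0$. Moreover, a back-of-the-envelope analysis of the whispering-gallery regime (where $\lambda^2-n^2\asymp n^{4/3}$ dominates and the mode is confined to a boundary shell of thickness $n^{-2/3}$) suggests $\im\lambda_n\asymp n^{-2/3}$ there, which is even worse: those modes are comparatively well damped, precisely because they carry a large tangential frequency and hence a large boundary trace relative to their interior mass. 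This matches the paper's microlocal discussion (\cref{sec:microlocal}), which identifies the low-tangential-frequency regime---not the glancing regime---as the stability bottleneck. So the Airy turning-point analysis you plan is not the one to do; the relevant asymptotics are the plain large-argument Hankel/McMahon expansions of $J_0$, and the key is extracting the $\lambda^{-2}$-sized imaginary part from the shift $1/(\lambda-\i)$.

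A smaller point: you describe the modes as ``quasimodes'' for $\A$ and speak of $O(\lambda^{-2})$-accuracy; the paper in fact exhibits \emph{exact} eigenvalues of $P$ (Lemma~3.10) and then views them as $O(\omega^{-1})$-quasimodes of $P(\i\omega)$ on the imaginary axis (Corollary~3.12), which is cleaner and avoids any admissibility or closeness-to-spectrum argument. Your remark about promoting a quasimode to regular initial data is unnecessary once one works with genuine eigenvectors, which are smooth Bessel profiles.
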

In particular \cref{th:intro-sharp} shows that the stability of \cref{eq:IBVP} is \emph{non-uniform} in general,\footnote{We
conjecture that the conclusion of \cref{th:intro-sharp} holds regardless of the geometry.} since any rate that is valid for \emph{all} finite-energy solutions  would lead to exponential decay of the energy. Note that, in the setting of \cref{th:intro-sharp}, \cref{as:dyn} is satisfied: here, the obstruction to uniform stabilization lies within the \emph{coupling} between interior and boundary dynamics rather than a failure of the geometric control condition.
This stands in contrast with many works dealing with non-uniform energy decay of waves, for instance \cite{Leb96proceedings,AnaLea14}, and constitutes the key feature which motivates our analysis.

\subsection{Background and literature review}

Generally speaking,  dynamic (or kinetic) boundary conditions such as \cref{eq:HBC} arise in problems where the boundary carries its own momentum and may model displacements of, e.g., point masses at the endpoints of one-dimensional media or thick membranes in the multidimensional setting.

Notably,  \emph{acoustic} boundary conditions for wave equations give rise to coupled systems that are quite close to \cref{eq:IBVP}. Such boundary conditions were introduced
(in a time-harmonic form) by physicists Morse and Ingard in \cite{MorIng86book} for small, irrotational perturbations of acoustic flows. Later,  Beale and Rosencrans gave in \cite{BeaRos74,Bea76} a more precise formulation of  the associated initial boundary value problems: in a bounded medium delimited by an impenetrable (non-porous) membrane, the velocity potential $\phi$ solves the wave equation $(\partial_t^2 - c^2 \Delta)\phi = 0$ and is subject to the boundary condition $\partial_t \delta = \partial_\nu \phi$, where the normal displacement $\delta$ satisfies $(m \partial_t^2 + d\partial_t + k)\delta = \rho \partial_t \phi$ if  tangential oscillations are neglected (``{locally reacting}'' surface in the terminology of \cite{MorIng86book}) or $(m \partial_t^2 + d\partial_t + k \Delta_\Gamma) \delta = \rho \partial_t \phi$ if transverse tension is taken into account. Here $c$ is the speed of sound in the medium,  $m$, $d$ and $k$ are surface mass, resistivity and stiffness, and $\rho$ is the density of the gas at rest. 
Spectral properties of the corresponding semigroup generators were investigated in \cite{Bea76}, whereas local energy decay results along with a scattering theory for a related {exterior} problem were given in \cite{Bea77}. Especially relevant to our study are the findings of Littman and Liu in \cite{Liu98}. For the locally reacting acoustic model posed in an infinite strip, they proved that the energy of solutions \emph{cannot} decay uniformly and established polynomial decay rates for smoother solutions. Their approach relies on a careful examination of the spectrum of the damped generator combined with a suitable spectral decomposition of solutions. Still in the locally reacting case  (that is, without the Laplace--Beltrami operator in the boundary dynamics) but for bounded domains,
 polynomial stabilization rates were established in \cite{MunQin03,AbbNic15}, either under \emph{a priori} knowledge of a resolvent estimate for the static Neumann feedback problem,  or under differential multiplier (``Lions'') geometric conditions; see also \cite{Tou13thesis,NicPau25}. In contrast, when the damping acts in the interior \emph{and} on the boundary, uniform stabilization is possible: in the locally reacting case, see for instance \cite{GaoLia18} (damping region covering the boundary) or \cite{CavMor20} (geometric control condition); in the ``non-locally reacting'' case,
 see also \cite{VicFro16} (differential multiplier condition).


 In the one-dimensional setting, early investigations of dynamic boundary conditions mostly concern  Euler--Bernoulli beam models coupled with rigid body ordinary differential equations \cite{LitMar88b,LitMar88}, for instance  when the beam is clamped at one end and attached to a tip mass at the other end \cite{Rao98,ConMor98}; see also \cite{ZhaWei14,FkiPau25} for related later works. Similarly as for acoustic models and while mere \emph{asymptotic} stability is preserved, it was observed that the addition of such boundary dynamics to an otherwise uniformly exponentially stable system may result in the existence of arbitrary slow finite-energy solutions. 
 On the other hand, one-dimensional wave equations with dynamic boundary conditions have been studied in different contexts, including control of
 gantry crane systems \cite{ConMid98,AndCor20} and reduction of drilling torsional vibrations \cite{RomBre18,VanFer22}; see also \cite{ChiNgu24}, the references therein, and the recent work \cite{DahChi25arxiv} which has the particularity of considering stability in 
$L^p$-based state spaces.


Motivated by diffusion processes, Wentzell introduced in \cite{Ven59}  (non-dynamic) boundary conditions
of the form $- \Delta_\Gamma u + \mbox{``lower-order'' terms} = - \partial_\nu u$. These also arise as approximations in transmission problems involving a thin layer around the domain, in the small thinness limit; see for instance \cite{BenLem96} and more specifically \cite[Appendix A]{Buffe17}.
Elliptic and parabolic problems with such boundary conditions were investigated in \cite{FavGol02,FabGol10,BonDam10} to cite only a few; see also the survey paper \cite{ApuNaz00}. As for hyperbolic problems, there has been abundant research on the wave equation with static (i.e., without second-order \emph{time} derivatives) or dynamic (such as \cref{eq:HBC}) Wentzell boundary conditions. It was shown in \cite{Hem00} by means of spectral analysis that the wave equation posed in a square with damped, {static} Wentzell boundary condition on two adjacent sides (and homogeneous Dirichlet boundary condition elsewhere) is \emph{not} uniformly stable on its natural energy space. Most interestingly, here the lack of uniform stability is not related to a violation of the geometric control condition, nor does it come from boundary dynamics. A similar configuration was considered in \cite{NicLao10}, with the difference that only one  side of the square is subject to the Wentzell boundary condition: there, it was proved that the energy of classical solutions decays like $1/t$,
although in that case it is not clear what conclusions to draw since geometric control fails. Localized interior damping, however, can achieve uniform stabilization of the model with static Wentzell boundary condition even in the absence of boundary damping: see \cite{CavCav09,CavLas12}, with the caveat that both references are restricted to geometries where  multiplier techniques are effective. Concerning waves with dynamic Wentzell boundary as in \cref{eq:IBVP}, the aforementioned negative results constitute strong evidence that one cannot expect uniform stability via damping acting on the boundary only, which is confirmed by our \cref{th:intro-sharp}.

In \cite{Buffe17}, Buffe considered the wave equation with both static and dynamic Wentzell boundary condition and \emph{localized} damping on the boundary; that is, the damping is of the form $b(x) \partial_t u$, the control function $b$ being nonnegative and sufficiently smooth. When the damping region (i.e., the support of $b$) is nontrivial,
 he proved by means of Carleman estimates that the energy of classical solutions decays like $1/\log(2 +t)^2$.
 This result generalizes to Wentzell boundary conditions the \emph{a priori}, ``worst-case scenario'' logarithmic decay rates of \cite{Leb96proceedings,LebRob97}, which were established for interior damping and Neumann boundary damping; see also the logarithmic local energy decay rate of \cite{Bur98} for exterior problems. Our setup is slightly different since we consider mixed boundary conditions and our damping function is constant, still the result of \cite{Buffe17} is applicable when $\Gamma_1 = \emptyset$ and yields logarithmic energy decay for classical solutions to \cref{eq:IBVP}. 
 
 In the present paper we complement Buffe's analysis of the problem with hyperbolic boundary condition by exhibiting the best possible energy decay rate in general domains under
 additional dynamical hypotheses, namely the geometric control condition.
To arrive at these results there are a number of difficulties that differentiate our work from the literature discussed above and warrant new technical developments, some of which we highlight here.
\begin{itemize}
    \item 
    We tackle the stabilization problem in the presence of both time and tangential second-order derivatives on the boundary, which is arguably the most difficult case. To capture the coupling between interior and boundary dynamics, we need precise normal and tangential trace estimates, which we will establish by means of microlocal techniques (\cref{sec:microlocal}).
    \item In order to exploit the geometric control condition, we implement a special decoupling argument (\cref{sec:decoupling}) which makes it possible to use, as a black box, Bardos, Lebeau and Rauch's boundary observability estimate for waves with homogeneous Dirichlet data \cite{BarLeb92}. In particular we  avoid a delicate propagation analysis for waves with second-order boundary conditions.
    \item Our investigation of  traces on the dynamic boundary hints at the presence of a stability bottleneck in the microlocal regime where tangential derivatives are negligible. This motivates, in the special case of the disk, a spectral  investigation of \emph{focusing modes}, which exhibit high-frequency oscillations in the radial direction, in order to assess the sharpness of our decay rate (\cref{sec:focusing}).  
\end{itemize}

We should also point out that the energy decay rate $1/t$ for \cref{eq:IBVP} was claimed in the recent work \cite{LiLianXiao23} but only under \emph{ad-hoc} differential multiplier conditions that severely restricts the geometry of the domain, even when the whole boundary is damped; furthermore, there seem to be gaps in their proof, namely the use of incorrect trace inequalities for Sobolev spaces of half-integer order.

Let us conclude this section with some additional references that are farther away from the topics of  stability and stabilization, but are nonetheless relevant to our work. The recent work \cite{baudouin2022unified}, which we discuss in more depth in \cref{sec:num} below, deals with observability of waves posed in an two-dimensional annulus, where the inner boundary is subject to a hyperbolic boundary condition and the outer boundary is left at rest.
In \cite{Vit17}, an extensive study of well-posedness is carried out for a nonlinear model generalizing \cref{eq:IBVP}; there, nonlinear (but \emph{subcritical})  potential terms are considered,
in addition to nonlinear, monotone damping. Related questions were also explored in \cite{GraSai12}. In \cite{GraLas14}, the authors investigate smoothing properties of waves with a hyperbolic boundary condition and \emph{strong} (Kelvin--Voigt) interior or boundary damping. They show that the resulting semigroup is analytic when the strong damping acts in the interior and the boundary, and that it is  of Gevrey class (which is a weaker property) if only  the interior is damped. Finally, a physical derivation for waves with dynamic boundary conditions (without tangential terms) can be found in \cite{Gol06}.





\section{Preliminaries and operator model}
\label{sec:pre}

\subsection{The Laplace--Beltrami operator on the boundary}
\label{sec:laplace-beltrami}

In this preliminary section, we recall for the reader's convenience some basic elements of differential geometry and
give the proper definition of the Laplace--Beltrami operator $\Delta_\Gamma$. We furthermore  set up, near $\Gamma_0$, \emph{normal geodesic coordinates} that will be useful in the microlocal analysis of \cref{sec:microlocal}. Our notation and presentation borrow  from \cite{RouLeb22a,RouLeb22b}, which we refer the reader to for additional details.



\subsubsection{Elements of differential geometry}

The boundary $\Gamma$ of $\Omega$ is a smooth $(d-1)$-dimensional submanifold of the Euclidean space $\R^{d}$. Given $x \in \Gamma$ and a \emph{local chart} $(O, \kappa)$ around $x$, the tangent space $T_x \Gamma$ is the set of linear maps $v : \C^\infty(\Gamma) \to \R$ of the form
\begin{equation}
\label{eq:basis-tx}
v(f) = \sum_{i=1}^{d-1} a^{i} \frac{\partial(f \circ \kappa^{-1})}{\partial y_i}, \quad f \in \C^\infty(\Gamma), \quad \quad (a^1, \dots, a^{d-1}) \in \R^{d-1}.
\end{equation}
Each $T_x\Gamma$ is a $(d-1)$-dimensional vector space that does not depend on the particular choice of local chart around $x$.
Let $n \in \N$. We shall identify each $T_x \R^{n}$ with $\R^n$.
Given a smooth function $F = (F_1, \dots, F_n)$ from $\Gamma$ to $\R^{n}$, the tangent map (or differential) of $F$ at $x \in \Gamma$ is the linear map $T_x F: T_x \Gamma \to \R^{n}$ defined by
\begin{equation}
T_x F(v) = (v(F_1), \dots, v(F_n)), \quad v \in  T_x \Gamma.
\end{equation}
Note that for $f \in \C^\infty(\Gamma)$, $T_xf(v) = v(f)$.
Since $\Gamma$ is a submanifold of $\R^{d}$,
the tangent map $T_x i$ of the inclusion $ i : \Gamma \to \R^{d}$ is injective for $x \in \Gamma$, and we may canonically identify $T_x \Gamma$ with its image under $T_x i$.
Then, $\Gamma$ is equipped with the (smooth) Riemannian \emph{metric} induced by the scalar product of $\R^{d}$:
\begin{equation}
g_x(u, v) \triangleq T_x i(v) \cdot T_x i(w), \quad v, w \in T_x \Gamma, \quad x \in \Gamma. 
\end{equation}
Let $x \in \Gamma$ and $C = (O, \kappa)$ be a local chart around $x$. We write $y = \kappa(x)$. Since $g_x$ is a scalar product on $T_x \Gamma$ and using the basis given by \cref{eq:basis-tx}, there exists a unique symmetric positive definite matrix $(g_{ij}^C(y))_{1\leq i,j \leq d-1}$ such that
\begin{equation}
g_x(v, w) = \sum_{i,j=1}^{d-1} g_{ij}^C(y) v^i w^j, \quad v, w \in T_x \Gamma,
\end{equation}
where $w^i$ and $v^i$, $i = 1, \dots, d-1$, are the coefficients of $w$ and $v$ in \cref{eq:basis-tx}, respectively. The components $y \in \kappa(O) \mapsto g_{ij}^C(y) \in \R$ are all smooth. The {Riemannian} \emph{gradient} $\nabla_g f(x)$ at $x \in \Gamma$ of a function $f \in \C^\infty(\Gamma)$ is the unique element of $T_x \Gamma$ satisfying
\begin{equation}
g_x(\nabla_g f(x), v) = v(f), \quad v \in T_x \Gamma.
\end{equation}
The Riemannian gradient $\nabla_g f$ defines a smooth vector field on $\Gamma$. If we choose a local chart $C = (O, \kappa)$ around $x = \kappa^{-1}(y) \in \Gamma$, then the coordinates of $\nabla_gf(x)$ in the associated basis \cref{eq:basis-tx} are 
\begin{equation}
(\nabla_g f)^{C,i}(y) = \sum_{j = 1}^{d-1} g^{C,ij}(y) \frac{\partial(f \circ \kappa^{-1})}{\partial y_j}\bigg|_{y},
\end{equation}
where $(g^{C,ij}(y))_{1\leq i,j\leq d-1}$ is the inverse of the matrix $(g^C_{ij}(y))_{1\leq i,j\leq d-1}$ introduced above. 

Since $\Gamma$ is the boundary of the smooth bounded domain $\Omega$, $\Gamma$ can be oriented by means of an outward-pointing vector field \cite[Section 16.1.3]{RouLeb22b}, and there exists a canonical {Riemannian} \emph{volume form} $\d V_g$; see, e.g., \cite[Section 17.1.4]{RouLeb22b}. For our purpose, it is enough to define the \emph{integral} of functions $f \in \C(\Gamma)$ with respect to $\d V_g$. The manifold $\Gamma$ is compact and thus we may find local  charts 
$C_n = (O_n, \kappa_n)$, $n = 1, \dots, N$  such that the $O_n$ constitute an open covering of $\Gamma$. Let $\theta_1, \dots, \theta_N$ form a \emph{partition of unity} subordinate to that covering (see, e.g., \cite[Section 15.2.3]{RouLeb22b}, \cite[Lemma 9.3]{Bre11book}, and also the next subsection). Then,
\begin{equation}
\label{eq:vol-form}
\int_\Gamma f \, \d V_g = \sum_{n = 1}^{N} \int_{\kappa_n(O_n)} \theta_n(\kappa_n^{-1}(y))f(\kappa_n^{-1}(y)){\det(g^{C_n}(y))}^{1/2} \, \d y, \quad f \in \C(\Gamma),
\end{equation}
where $\d y$ denotes the Lebesgue measure on $\R^{d-1}$. The integral in \cref{eq:vol-form} does not depend on the choice of local charts.
With that in hand, we define the \emph{divergence} $\dive_g u$ of a smooth vector field $u$ on $\Gamma$ as the unique element in $\C^{\infty}(\Gamma)$ satisfying
\begin{equation}
\int_\Gamma \varphi \dive_g u \, \d V_g = - \int_\Gamma g(\nabla_g \varphi, u) \, \d V_g, \quad \varphi \in \C^\infty(\Gamma).
\end{equation}
Finally, the {Laplace--Beltrami} operator $\Delta_g$ on $\Gamma$ is defined by
\begin{equation}
\Delta_g f  =\dive_g \nabla_g f, \quad f \in \C^\infty(\Gamma).
\end{equation}
Given a local chart $C = (O, \kappa)$ around $x \in \Gamma$ with $y = \kappa(x)$, we have
\begin{equation}
\label{eq:pullback-beltrami}
\Delta_g u(x) = \det(g^C(y))^{-1/2} \sum_{i,j=1}^{d-1} \mleft.\frac{\partial}{\partial y_i} \mleft( \det(g^C(y))^{1/2} g^{C,ij}(y) \frac{\partial(u\circ \kappa^{-1})}{\partial y_j} \mright)\mright|_{y}.
\end{equation}
Here, the differential operators $\nabla_g$, $\dive_g$ and $\Delta_g$ act on smooth functions and vector fields. 

By Riesz' representation theorem, \cref{eq:vol-form} defines a unique Radon measure on $\Gamma$, the Riemannian measure. In fact, as $\Gamma$ is the boundary of $\Omega$, this is precisely the \emph{surface measure} $\d \sigma$ induced by the Lebesgue measure; see also \cite[Proposition 17.2]{RouLeb22b}. The Riemannian measure gives a natural meaning to $L^p$-spaces on $\Gamma$, and in particular the (complex\footnote{We consider suitable complexifications of the previous objects.}) Hilbert space $L^2(\Gamma)$ is the space of all (equivalence classes of) measurable functions $f : \Gamma \to \Co$ such that
\begin{equation}
\label{eq:L2-gamma}
\int_\Gamma |f|^2 \, \d V_g = \int_\Gamma |f|^2 \, \d \sigma < + \infty;
\end{equation}
equivalently, it is the completion of the space of smooth functions on $\Gamma$ with respect to the norm defined by \cref{eq:L2-gamma}. Similarly, the Sobolev space $H^1(\Gamma)$ is the completion of smooth functions with respect to
\begin{equation}
\label{eq:H1-gamma}
\|f\|^2_{H^1(\Gamma)} \triangleq \int_\Gamma g(\nabla_g f, \overline{\nabla_g f}) \, \d V_g.
\end{equation}
At each $x \in \Gamma$, the map $T_x i$ embeds the tangent space $T_x \Gamma$ into $\R^{d}$, and by letting $\nabla_\Gamma f(x) \triangleq T_xi(\nabla_gf(x))$ we may simply rewrite \cref{eq:H1-gamma} as
\begin{equation}
\|f\|^2_{H^1(\Gamma)} = \int_\Gamma \nabla_\Gamma f \cdot \overline{\nabla_\Gamma f} \, \d \sigma = \int_\Gamma |\nabla_\Gamma f|^2 \, \d \sigma.
\end{equation}
In the context of \cref{eq:IBVP}, it is particularly important to note that, if $f$ is the trace on $\Gamma$ of a sufficiently smooth function $u$ defined in $\Omega$, then $\nabla_\Gamma f$ coincides with the \emph{tangential gradient} (or surface gradient) of $u$: in other words,
if $x \in \Gamma$ and $\nu(x)$ is the unit outward normal vector at $x$, then $\nabla_\Gamma f(x)$ is tangent to $\Gamma$ at $x$ and
\begin{equation}
\nabla u(x) = \nabla_\Gamma f(x) + (\nabla u(x) \cdot \nu(x)) \nu(x).
\end{equation}
With that in mind and when there is no risk of confusion, we will directly write $\nabla_\Gamma u$ instead of $\nabla_\Gamma f$ or $\nabla_\Gamma (u|_\Gamma)$.
We shall also use fractional Sobolev spaces $H^s(\Gamma)$, $s \in \R$, which are modeled after $H^s(\R^{d-1})$ via
local charts; see for instance \cite[Chapter 4, Section 3]{Tay96book}. Since $\Gamma$ is a manifold without boundary, note that $H^s(\Gamma)$ and $H^{-s}(\Gamma)$ are dual. Similarly as in the Euclidian case,  differential operators on $\Gamma$ have natural extensions to Sobolev spaces and even \emph{distributions}; see, e.g., \cite[Sections 16.3.3 and 18.2.1]{RouLeb22b}. In particular, $\Delta_g$, which we will now write $\Delta_\Gamma$, extends to a bounded linear operator from $H^2(\Gamma)$ into $L^2(\Gamma)$ and from $H^1(\Gamma)$ into $H^{-1}(\Gamma)$; see also \cite[Section 17.3]{RouLeb22b}.

\subsubsection{Local setting near the boundary}
\label{sec:normal-geodesic}
Throughout the manuscript we will use the following notation:
\begin{equation}
D_{y_i} = - \i \partial_{y_i}, \quad i  =1, \dots, d.
\end{equation}
Let $x^0 \in \Gamma$.
By \cite[Theorem 9.7]{RouLeb22a} there exists a local chart $C = (O, \kappa)$ (in $\R^d$) around $x^0$ such that, writing $\kappa = (\kappa_1, \dots, \kappa_d)$, $y = \kappa(x) = (\kappa_1(x), \dots, \kappa_d(x)) = (y_1, \dots, y_d)$:
\begin{enumerate}
    \item  $\kappa(x^0) = 0$ and
    \begin{equation}
    \label{eq:para-gamma}
    \kappa(\Omega \cap O) = \{y \in \kappa(O) : y_d \geq 0\}, \quad \kappa(\Gamma \cap O) = \{ y \in \kappa(O) : y_d = 0 \};
    \end{equation}
    \item There exist smooth coefficients $a_{ij} : \kappa(O) \to \R$, $1 \leq i,j \leq d-1$, such that
    \begin{equation}
    \label{eq:pullback-laplacian}
    - \Delta^C \triangleq - \Delta (\kappa^{-1})^\ast    = D^2_{y_d}  + \sum_{i,j =1}^{d-1} a_{ij}(y) D_{y_i} D_{y_j} \quad \mbox{in}~\kappa(O)
    \end{equation}
    (with pullback notation, $(\kappa^{-1})^\ast f = f \circ \kappa^{-1}$);
    \item 
    There exists a constant $c > 0$ such that
    \begin{equation}
    \label{eq:elliptic}
    \sum_{i,j = 1}^{d-1} a_{ij}(y) \xi_i \xi_j \geq c |\xi_\tau|^2, \quad \xi_\tau = (\xi_1, \dots, \xi_{d-1}) \in \R^{d-1}, \quad y \in \kappa(O).
    \end{equation}
\end{enumerate}
Such local coordinates are referred to as \emph{normal geodesic coordinates}. In fact, it is possible to choose such a local chart  in a way that guarantees that the pullback of the normal derivative $\partial_\nu = \nabla \cdot \nu $ coincides, on the boundary, with the derivative with respect to the normal variable $y_d$:
\begin{equation}
\partial_\nu^C \triangleq \partial_\nu (\kappa^{-1})^\ast = \partial_{y_d} = \i D_{y_d};
\end{equation}
see the proof of \cite[Theorem 9.7]{RouLeb22a} and in particular \cite[Equation (9.4.5)]{RouLeb22a}.

Since $\Gamma_0$ is compact, there exists an open covering of $\Gamma_0$ made of such charts $C_n = (O_n, \kappa_n)$, $n = 1, \dots, N$. We use a partition of unity: by, e.g., \cite[Lemma 9.3]{Bre11book} there exist smooth functions $\theta_n : \R^{d} \to [0, 1]$, $n = 0, \dots, N$ such that:
\begin{enumerate}
    \item 
\begin{equation}
\sum_{n=0}^N\theta_n(x) = 1, \quad x \in \R^d;
\end{equation}
    \item For $n = 1, \dots, N$, $\theta_n|_{O_n} \in \D(O_n)$;
    \item The support of $\theta_0$ is contained in $\R^{d} \setminus \Gamma_0$ and $\theta_0|_{\Omega} \in \D(\Omega)$.
\end{enumerate}
Therefore, for each $n = 1,\dots N$, there exist smooth coefficients  $a_{ij}^n : \kappa_n(O_n) \to \R$, $1 \leq i,j \leq d-1$ such that $-\Delta^{C_n}$ has the form \cref{eq:pullback-laplacian}. Now, pick $\theta'_n \in \D(O_n)$ such that $\theta'_n = 1$ near the support of $\theta_n$, and let
\begin{equation}
\tilde{a}_{ij}^n \triangleq (\theta'_n \circ \kappa_n^{-1}) a_{ij}^n, \quad 1 \leq i,j\leq d-1;
\end{equation}
then the $\tilde{a}_{ij}^n$ trivially extend to smooth coefficients in $\R^{d}$ that are compactly supported and coincide with $a^n_{ij}$ near $\kappa_n(\supp(\theta_n))$, the image of the support of $\theta_n$ under $\kappa_n$. The differential operator on $\R^{d}$ defined by \cref{eq:pullback-laplacian} with $a^n_{ij}$ replaced by $\tilde{a}^n_{ij}$ is written \smash{$-\tilde{\Delta}^{C_n}$.}
On the other hand, by restricting each $\kappa_n$ to $O_n \cap \Gamma_0$ we see that $C_n$ also defines a local chart in $\Gamma_0$ as a $(d-1)$-dimensional submanifold of $\R^{d}$, and \cref{eq:pullback-beltrami} provides smooth coefficients $b^n_{ij} : \kappa_n(\Gamma_0 \cap O_n) \to \R$ such that
\begin{equation}
-\Delta_\Gamma^{C_n} = \sum_{i,j=1}^{d-1} b^n_{ij}(y_\tau) D_{y_i}D_{y_j} \quad \mbox{in}~ \kappa_n(\Gamma_0 \cap O_n),
\end{equation}
with, for some $c> 0$, the ellipticity condition
\begin{equation}
\label{eq:ellip-belt}
\sum_{i,j}^{d-1} b_{ij}^n(y_\tau) \xi_i \xi_j \geq c|\xi_\tau|^2, \quad y_\tau \in \kappa(O_n \cap \Gamma_0), \quad \xi_\tau = (\xi_1, \dots, \xi_{d-1})\in\R^{d-1}.
\end{equation}
Exactly as before, using the cutoffs $\theta'_n$ we define localized coefficients $\tilde{b}_{ij}^n$ and differential operators $-\tilde{\Delta}_\Gamma^{C_n}$ on $\R^{d-1}$. Since $\Gamma_0$ is covered by a finite number of those charts $C_n$, we may choose an ellipticity constant $c > 0$ in \cref{eq:elliptic,eq:ellip-belt} that does not depend on $n$. 



\subsection{Well-posedness and asymptotic stability}
\label{sec:wp-as}

In this section, we establish well-posedness of \cref{eq:IBVP} and asymptotic convergence to zero of the energy of solutions. What we present here is either standard or relatively straightforward, and there might be some overlap with, e.g., \cite{Buffe17,Vit17}. Still, we go over most details for the sake of clarity and self-containedness.

\subsubsection{Functional setting and semigroup generation}
\label{sec:sg-generation}

By multiplying the wave equation \cref{eq:wave-equation} by a smooth  function $\varphi$ of the space variable and using Green's formula along with the boundary conditions \cref{eq:HBC,eq:dirichlet}, we (formally) derive a  weak formulation of \cref{eq:IBVP}: for all sufficiently smooth $\varphi$ with $\varphi|_{\Gamma_1} = 0$, solutions $u$ to \cref{eq:IBVP} satisfy
\begin{equation}
\label{eq:weak-form}
\dtt{} \mleft (\int_\Omega u \varphi \, \d x + \int_{\Gamma_0} u \varphi \, \d \sigma\mright)
+ \dt{} \int_{\Gamma_0} u \varphi \, \d \sigma
+ \int_\Omega \nabla u \cdot \nabla \varphi \, \d x + \int_{\Gamma_0} \nabla_\Gamma u \cdot \nabla_\Gamma \varphi \, \d \sigma = 0.
\end{equation}
\Cref{eq:weak-form} suggests working with the following spaces:
\begin{subequations}
\begin{align}
&H \triangleq L^2(\Omega) \times L^2(\Gamma_0), \\
\label{eq:V} &V \triangleq \{ (u, w) \in H^1(\Omega) \times H^1(\Gamma_0) : u|_{\Gamma_0} = w \} \simeq \{ u \in H^1(\Omega) : u|_{\Gamma_0} \in H^1(\Gamma_0)\}.
\end{align}
\end{subequations}
Both spaces are Hilbert spaces if equipped with their natural product structures, but whenever $\Gamma_1 \not = \emptyset$ we equip  $V$ with the (equivalent, due to a standard Poincar\'e-type inequality for functions vanishing on $\Gamma_1$) norm $\|\cdot \|^2_V \triangleq \int_\Omega |\nabla\cdot|^2 \, \d x + \int_{\Gamma_0} |\nabla_\Gamma \cdot|^2 \, \d \sigma$.
The identification in \cref{eq:V} is straightforward, and we shall use it without further comment. 
Furthermore,  $V$ as a subspace of $H$ is dense, the embedding of $V$ into $H$ is clearly continuous. As a result, we may consider a standard pivot duality
\begin{equation}
\label{eq:pivot}
V \hookrightarrow H \hookrightarrow V^\ast,
\end{equation}
where $V^\ast$ stands for the topological antidual of $V$ endowed with the dual norm, and $H$ is identified with its antidual by means of Riesz' theorem. Recall also that on smooth compact manifolds the Rellich--Kondrachov theorem provides compact Sobolev embeddings \cite[Chapter 3, Proposition 4.4]{Tay11book}, and thus the embeddings in \cref{eq:pivot} are in fact {compact}. Next, we  define a duality map $A \in \L(V, V^\ast)$ by
\begin{equation}
\label{eq:A-dual}
\langle Au, v \rangle_{V^\ast, V}  \triangleq \int_\Omega \nabla u \cdot \overline{\nabla v} \,   \d x + \int_{\Gamma_0} \nabla_\Gamma u \cdot \overline{\nabla_\Gamma v} \, \d \sigma, \quad u, v \in V,
\end{equation}
as well as a control operator $B \in \L(L^2(\Gamma_0), H)$ by
$
Be = (0, e)
$ for all $e \in L^2(\Gamma_0)$. Then, \cref{eq:IBVP} takes the form of the abstract evolution equation
\begin{equation}
\label{eq:IBVP-abstract}
u'' + BB^\ast u' + Au = 0,
\end{equation}
where the adjoint $B^\ast \in \L(H, L^2(\Gamma_0))$ of $B$ is given by $B^\ast(v,w) = w$ for $(v, w) \in H$; in particular, $B^\ast v = v|_{\Gamma_0}$ whenever $v \in V$.  Now, in view of \cref{eq:pivot} we may see $A$ as an unbounded operator on $H$ by letting
\begin{equation}
\label{eq:H-real-A}
\dom(A) \triangleq \{ u \in V : Au \in H \},
\end{equation}
so that $\langle Au, v\rangle_{V^\ast,V} = \langle Au, v\rangle_H$ for all $u \in \dom(A)$ and $v \in V$. This $H$-realization of $A$ is a nonnegative (strictly positive if $\Gamma_1 \not = \emptyset$) self-adjoint operator with compact resolvent, and here we are able to give an explicit characterization of its domain.
\begin{prop}[Characterization of the domain]
\label{prop:carac-dom-A}
We have
\begin{subequations}
\begin{align}
\label{eq:carac-dom-A}
&\dom(A) = \mleft \{ u \in H^2(\Omega) : u|_{\Gamma_0} \in H^2(\Gamma_0),~ u|_{\Gamma_1} = 0 \mright \}, \\
\label{eq:A-explicit}
& Au =  (-\Delta u, -\Delta_\Gamma u + \partial_\nu u), \quad u \in \dom(A),
\end{align}
\end{subequations}
and the norm $\|\cdot\|_{H^2(\Omega)} + \|\cdot\|_{H^2(\Gamma_0)}$ is equivalent to the graph norm of $A$. 
\end{prop}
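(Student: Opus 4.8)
The plan is to unwind the definition \cref{eq:H-real-A} of the $H$-realization of $A$ into a variational identity and then run an elliptic bootstrap. By \cref{eq:A-dual,eq:H-real-A}, an element $u$ belongs to $\dom(A)$ precisely when $u \in V$ and there is a pair $(f,g) \in H = L^2(\Omega)\times L^2(\Gamma_0)$ such that
\begin{equation*}
\int_\Omega \nabla u \cdot \overline{\nabla v}\,\d x + \int_{\Gamma_0} \nabla_\Gamma u \cdot \overline{\nabla_\Gamma v}\,\d\sigma = \int_\Omega f\,\bar v\,\d x + \int_{\Gamma_0} g\,\overline{v|_{\Gamma_0}}\,\d\sigma, \quad v \in V,
\end{equation*}
and in this case $Au = (f,g)$. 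I would prove the two inclusions in \cref{eq:carac-dom-A} separately, reading off the norm equivalence from the harder one.

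The inclusion ``$\supseteq$'' is a direct computation. If $u \in H^2(\Omega)$ with $u|_{\Gamma_0}\in H^2(\Gamma_0)$ and $u|_{\Gamma_1}=0$, then $u \in V$, and two integrations by parts --- Green's formula in $\Omega$, where the condition $v|_{\Gamma_1}=0$ kills the $\Gamma_1$-part of the boundary term, and Green's formula on the compact boundaryless manifold $\Gamma_0$ --- rewrite the left-hand side above as $\int_\Omega(-\Delta u)\,\bar v\,\d x + \int_{\Gamma_0}(-\Delta_\Gamma u + \partial_\nu u)\,\overline{v|_{\Gamma_0}}\,\d\sigma$. Since $-\Delta u \in L^2(\Omega)$ and $-\Delta_\Gamma u + \partial_\nu u \in L^2(\Gamma_0)$ --- because $\Delta_\Gamma$ maps $H^2(\Gamma_0)$ boundedly into $L^2(\Gamma_0)$ and $\partial_\nu$ maps $H^2(\Omega)$ boundedly into $H^{1/2}(\Gamma)\hookrightarrow L^2(\Gamma_0)$ --- this shows $u\in\dom(A)$ together with \cref{eq:A-explicit}, and bounds the graph norm of $u$ by $\|u\|_{H^2(\Omega)}+\|u|_{\Gamma_0}\|_{H^2(\Gamma_0)}$.

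For ``$\subseteq$'', let $u\in\dom(A)$ and set $(f,g)=Au$. Testing the identity with $v\in\D(\Omega)$ gives $-\Delta u = f$ in $\D'(\Omega)$, so $\Delta u \in L^2(\Omega)$ and the weak normal trace $\partial_\nu u$ is a well-defined element of $H^{-1/2}(\Gamma_0)$. As the trace map $V \ni v\mapsto v|_{\Gamma_0}$ is onto $H^1(\Gamma_0)$, Green's formula in $\Omega$ recasts the identity as the Wentzell relation $-\Delta_\Gamma(u|_{\Gamma_0}) + \partial_\nu u = g$, which holds a priori in $H^{-1}(\Gamma_0)$. I would then bootstrap in two rounds. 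First, $-\Delta_\Gamma(u|_{\Gamma_0}) = g - \partial_\nu u \in H^{-1/2}(\Gamma_0)$, so ellipticity of $-\Delta_\Gamma$ on the closed manifold $\Gamma_0$ gives $u|_{\Gamma_0}\in H^{3/2}(\Gamma_0)$; since $u$ solves the Dirichlet problem $-\Delta u = f\in L^2(\Omega)$ with boundary data $u|_{\Gamma_0}\in H^{3/2}(\Gamma_0)$ and $u|_{\Gamma_1}=0$, elliptic regularity for the Laplacian on the smooth domain $\Omega$ gives $u\in H^2(\Omega)$, whence $\partial_\nu u\in H^{1/2}(\Gamma_0)\hookrightarrow L^2(\Gamma_0)$. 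Plugging this back into the Wentzell relation, $-\Delta_\Gamma(u|_{\Gamma_0}) = g - \partial_\nu u \in L^2(\Gamma_0)$, hence $u|_{\Gamma_0}\in H^2(\Gamma_0)$, which establishes the inclusion and \cref{eq:A-explicit}. Keeping track of the constants in each elliptic estimate, and using the identity $\langle Au,u\rangle_H + \|u\|_H^2 = \|u\|_{H^1(\Omega)}^2 + \|u|_{\Gamma_0}\|_{H^1(\Gamma_0)}^2$ together with Cauchy--Schwarz to control the lower-order terms, one obtains $\|u\|_{H^2(\Omega)} + \|u|_{\Gamma_0}\|_{H^2(\Gamma_0)}\lesssim \|u\|_H + \|Au\|_H$, which finishes the norm equivalence.

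The step I expect to be the main obstacle is exactly this regularity bootstrap near and on $\Gamma_0$: one must make rigorous sense of the weak normal trace $\partial_\nu u \in H^{-1/2}(\Gamma_0)$ and of the Wentzell relation at this low level of regularity, and then iterate elliptic regularity for the mixed Dirichlet problem in $\Omega$ and for $-\Delta_\Gamma$ on $\Gamma_0$ in the correct order. The assumption $\overline{\Gamma_0}\cap\overline{\Gamma_1}=\emptyset$ is what keeps this clean, since it decouples the traces on $\Gamma_0$ and $\Gamma_1$ and allows $\Gamma_0$ to be treated as a boundaryless manifold. Alternatively, one could localize the whole argument in the normal geodesic coordinates of \cref{sec:normal-geodesic}, using tangential difference quotients to gain tangential regularity and then trading it for normal regularity via the factorization \cref{eq:pullback-laplacian} of $-\Delta$.
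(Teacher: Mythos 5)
Your proposal follows the same route as the paper's own proof: the inclusion $\supseteq$ by Green's formula; the inclusion $\subseteq$ by testing against $\D(\Omega)$ to get $-\Delta u=f\in L^2$, reading off $\partial_\nu u\in H^{-1/2}$, deriving the Wentzell relation on $\Gamma_0$, and then the two-step bootstrap (elliptic regularity for $\Delta_\Gamma$ to get $u|_{\Gamma_0}\in H^{3/2}$, the Dirichlet problem in $\Omega$ to get $u\in H^2(\Omega)$, hence $\partial_\nu u\in L^2$, and once more $\Delta_\Gamma$-regularity to get $u|_{\Gamma_0}\in H^2$), with the norm equivalence read off from the same chain of estimates. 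The only cosmetic difference is that you justify the Wentzell relation via surjectivity of the trace $V\to H^1(\Gamma_0)$, whereas the paper argues via test functions $\rho v$ with $v\in H^2(\Omega)$ and a cutoff $\rho$ supported near $\Gamma_0$; both rely on $\overline{\Gamma_0}\cap\overline{\Gamma_1}=\emptyset$ and give the same conclusion.
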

\begin{proof}
Let $u \in V$. Then $u \in \dom(A)$ if and only if there exists $(f, e) \in H$ such that
\begin{equation}
\label{eq:weak-A-dom}
\int_\Omega \nabla u \cdot \overline{\nabla v} \,   \d x + \int_{\Gamma_0} \nabla_\Gamma u \cdot \overline{\nabla_\Gamma v} \, \d \sigma = \int_\Omega f \overline{v} \, \d x + \int_{\Gamma_0} e \overline{v} \, \d \sigma, \quad v \in V.
\end{equation}
If $u$ belongs to the set at the right-hand side of \cref{prop:carac-dom-A} then we see, using Green's formula, that \cref{eq:weak-A-dom} holds with $(f,e) = (-\Delta u, -\Delta_\Gamma u + \partial_\nu u) \in H$. Conversely, if $u \in \dom(A)$ then \cref{eq:weak-A-dom} with $v$ taken in $\D(\Omega)$ shows that $-\Delta u = f \in L^2(\Omega)$, where $-\Delta u$ was defined \emph{a priori} in $\D(\Omega)$. Since $u \in H^1(\Omega)$ and $\Delta u \in L^2(\Omega)$, recall that $\partial_\nu u$ is (continuously) well-defined in $H^{-1/2}(\Gamma)$ by
\begin{equation}
\label{eq:weak-norm}
\langle \partial_\nu u, \psi\rangle = \int_\Omega \nabla u \cdot \nabla \varphi + \Delta u \varphi \, \d x = \int_\Omega \nabla u \cdot \nabla \varphi - f  \varphi \, \d x , \quad \psi \in H^{1/2}(\Gamma), 
\end{equation}
where $\varphi$ is any element of $H^1(\Omega)$ such that $\varphi|_{\Gamma} = \psi$ (such a function always exists), and the brackets denote the bilinear (as opposed to sesquilinear in, e.g., \cref{eq:A-dual}) duality pairing between $H^{-1/2}(\Gamma)$ and $H^{1/2}(\Gamma)$. Plugging \cref{eq:weak-norm} into \cref{eq:weak-A-dom} with $\varphi = \overline{v}$, we then see that
\begin{equation}
\label{eq:weak-bpde}
\int_{\Gamma_0} \nabla_\Gamma u \cdot \overline{\nabla_\Gamma v} \, \d \sigma = - 
\langle \partial_\nu u, \overline{v}\rangle + \int_{\Gamma_0}e \overline{v} \, \d \sigma, \quad v \in V.
\end{equation}
In the case $\Gamma_1 = \emptyset$ we  infer from \cref{eq:weak-bpde} that $-\Delta_\Gamma u = -\partial_\nu u + e$ in the sense of distributions on $\Gamma = \Gamma_0$; indeed, $V$ contains (say) $H^2(\Omega)$, so that the image of $V$ under the trace map contains the entirety of $H^{3/2}(\Gamma)$, including $\D(\Gamma)$. If $\Gamma_1 \not = \emptyset$, since anyway, $\overline{\Gamma_0} \cap \overline{\Gamma_1} = \emptyset$ we can find a function $\rho \in \C^\infty(\overline{\Omega})$ such that $\rho = 0$ near $\Gamma_1$ and $\rho = 1$ near $\Gamma_0$. Then, $\{ \rho v, v \in H^2(\Omega)\} \subset V$ and similarly we arrive at the conclusion that, in the sense of distributions,  $-\Delta_\Gamma u = - \partial_\nu u +e$ on $\Gamma_0$. In any case,  elliptic regularity for the Laplace--Beltrami operator on the boundaryless manifold $\Gamma_0$ leads to $u|_{\Gamma_0} \in H^{3/2}(\Gamma_0)$; use for instance \cite[Chapter 5, Theorem 1.3]{Tay11book} and interpolation. Because $\overline{\Gamma_0} \cap \overline{\Gamma_1} = \emptyset$ and $u = 0$ on $\Gamma_1$, it  follows in fact that $u|_{\Gamma} \in H^{3/2}(\Gamma)$. Now, recalling that $\Delta u \in L^2(\Omega)$, by elliptic regularity for the Laplacian with non-homogeneous boundary data we deduce that $u \in H^2(\Omega)$; see, e.g., \cite[Preface or Chapter 2, Theorem 5.2]{LioMag68book}. Then trace theory yields $\partial_\nu u \in H^{1/2}(\Gamma) \subset L^2(\Gamma)$ and using again elliptic regularity for $\Delta_\Gamma$ leads to $u|_{\Gamma_0} \in H^2(\Gamma_0)$, which completes the proof of the inclusion from left to right in \cref{eq:carac-dom-A}. \Cref{eq:A-explicit} follows from an application of Green's formula in \cref{eq:weak-A-dom}. We omit the detailed proof of the last statement of \cref{prop:carac-dom-A}: simply keep track of the norms in the above arguments.
\end{proof}

Note that the special second-order structure \cref{eq:IBVP-abstract} is now standard in the literature and in particular corresponds to the setting of \cite[Section 3]{AnaLea14}; see also, e.g., \cite{AmmTuc01,AmmNic14book,ChiPau23}. As we know, the natural phase space for \cref{eq:IBVP} is
\begin{equation}
\H \triangleq V \times H
\end{equation}
equipped with its product Hilbertian structure. We define an unbounded operator $\A$ on $\H$ by
\begin{equation}
\A \triangleq \begin{pmatrix} 0 & - 1 \\ A & BB^\ast \end{pmatrix}, \quad \dom(\A) \triangleq \dom(A) \times V.
\end{equation}
With \cref{prop:carac-dom-A} we immediately see that
\begin{equation}
\dom(\A) = \left \{ (u_0, u_1) \in H^2(\Omega) \times H^1(\Omega) :
 (u_0, u_1)|_{\Gamma_0} \in H^2(\Gamma_0) \times H^1(\Gamma_0),~ (u_0, u_1)|_{\Gamma_1} = 0\right \}
\end{equation}
Well-posedness of \cref{eq:IBVP}, through its abstract formulation \cref{eq:IBVP-abstract}, is guaranteed by the following proposition.
\begin{prop}[Semigroup generation]
\label{prop:wp}
The operator $-\A$ is the infinitesimal generator of a strongly continuous semigroup $\{e^{-t\A} \}_{t \geq 0}$ of linear bounded operators on $\H$. If $\Gamma_1 \not = \emptyset$ then $\{e^{-t\A} \}_{t \geq 0}$ is in fact a contraction semigroup.
\end{prop}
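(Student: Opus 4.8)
The plan is to apply the Lumer--Phillips theorem (or a variant thereof, e.g.\ the characterization via dissipativity plus range condition) to the operator $-\A$ on $\H = V \times H$.

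\textbf{Dissipativity.} First I would compute, for $(u, v) \in \dom(\A)$, the real part of $\langle \A(u,v), (u,v)\rangle_\H$. Writing $\A(u,v) = (-v, Au + BB^\ast v)$ and using the inner product $\langle (u_1,v_1),(u_2,v_2)\rangle_\H = \langle u_1, u_2\rangle_V + \langle v_1, v_2\rangle_H$, we get
\begin{equation}
\langle \A(u,v), (u,v)\rangle_\H = - \langle v, u\rangle_V + \langle Au, v\rangle_H + \langle BB^\ast v, v\rangle_H.
\end{equation}
Since $\langle Au, v\rangle_H = \langle Au, v\rangle_{V^\ast, V}$ for $u \in \dom(A)$, $v \in V$, and this duality pairing is (by \cref{eq:A-dual}) the sesquilinear form whose ``diagonal'' is $\|\cdot\|_V^2$, the terms $-\langle v, u\rangle_V$ and $\langle Au, v\rangle_H$ are complex conjugates of one another, so their sum is purely imaginary. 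Hence $\re \langle \A(u,v),(u,v)\rangle_\H = \re\langle BB^\ast v, v\rangle_H = \|B^\ast v\|_{L^2(\Gamma_0)}^2 = \int_{\Gamma_0}|v|_{\Gamma_0}|^2 \, \d\sigma \geq 0$, which is exactly dissipativity of $-\A$. Note this is precisely the infinitesimal form of the energy identity \cref{eq:energy-id}.

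\textbf{Range condition.} Next I would show that $\lambda + \A$ is surjective onto $\H$ for some $\lambda > 0$ (in fact $\lambda = 1$, or any $\lambda > 0$ if $\Gamma_1 \neq \emptyset$, and I may take $\lambda = 0$ when $\Gamma_1 \neq \emptyset$ to get the contraction statement directly — but it is cleanest to just do $\lambda > 0$). Given $(f, g) \in V \times H$, solving $(\lambda + \A)(u,v) = (f,g)$ means $-v + \lambda u = f$ and $Au + BB^\ast v + \lambda v = g$; eliminating $v = \lambda u - f$ gives the elliptic problem $Au + \lambda BB^\ast u + \lambda^2 u = g + BB^\ast f + \lambda f$ in $V^\ast$. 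The left-hand side is the operator associated with the bounded, coercive (on $V$, using that $A$ is nonnegative self-adjoint, $BB^\ast \geq 0$ and $\lambda^2 > 0$ controls the $H$-norm, together with the fact that $\|\cdot\|_V^2 + \|\cdot\|_H^2$ is an equivalent norm on $V$ when $\Gamma_1 = \emptyset$, and $\|\cdot\|_V$ alone is equivalent when $\Gamma_1 \neq \emptyset$ by Poincaré) sesquilinear form $a(u,w) = \langle Au, w\rangle_{V^\ast,V} + \lambda \langle B^\ast u, B^\ast w\rangle_{L^2(\Gamma_0)} + \lambda^2 \langle u, w\rangle_H$ on $V \times V$; the right-hand side defines an element of $V^\ast$. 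By Lax--Milgram there is a unique $u \in V$; then $v = \lambda u - f \in V$, and reading off the equation $Au = g + BB^\ast f + \lambda f - \lambda BB^\ast u - \lambda^2 u \in H$ shows $u \in \dom(A)$, hence $(u,v) \in \dom(\A)$. So $\ran(\lambda + \A) = \H$.

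\textbf{Conclusion.} Since $-\A$ is densely defined ($\dom(\A) \supset \dom(A) \times V$ is dense because $\dom(A)$ is dense in $V$ — it contains e.g.\ smooth functions vanishing near $\Gamma_1$ — and $V$ is dense in $H$), dissipative, and $\lambda + \A$ is surjective for some $\lambda > 0$, the Lumer--Phillips theorem yields that $-\A$ generates a $C_0$-semigroup of contractions with respect to an equivalent norm, and in general a $C_0$-semigroup satisfying $\|e^{-t\A}\| \leq e^{\omega t}$. When $\Gamma_1 \neq \emptyset$, the norm $\|\cdot\|_V$ is already the coercive one and the range condition holds with $\lambda = 0$, so $-\A$ is $m$-dissipative and $\{e^{-t\A}\}_{t\geq 0}$ is a genuine contraction semigroup on $\H$.

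\textbf{Main obstacle.} The only real subtlety — and the step I would be most careful about — is the bookkeeping around the duality structure: verifying cleanly that $\langle Au, v\rangle_H + \overline{\langle v, u\rangle_V}$ vanishes (equivalently, that $\langle Au,v\rangle_H = \langle u, v\rangle_V$ as a consequence of $\langle Au,\cdot\rangle_H$ agreeing with $\langle Au,\cdot\rangle_{V^\ast,V}$ on $V$, which is built into the definition \cref{eq:H-real-A}), and making sure the coercivity of $a(\cdot,\cdot)$ is argued with the correct norm in each of the two cases $\Gamma_1 = \emptyset$ and $\Gamma_1 \neq \emptyset$. The PDE/elliptic content is entirely routine once \cref{prop:carac-dom-A} is in hand.
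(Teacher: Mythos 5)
Your overall strategy — Lumer--Phillips on $\H = V \times H$, with a Lax--Milgram argument for the range condition — is the paper's approach, and your range/surjectivity step is fine. The gap is in the dissipativity computation, and it sits exactly where you flag as the ``main obstacle.'' You assert that $\langle Au, v\rangle_H$ and $\overline{\langle v,u\rangle_V}$ coincide because the diagonal of $\langle A\cdot,\cdot\rangle_{V^\ast,V}$ is $\|\cdot\|_V^2$; by the paper's normalization this holds \emph{only} when $\Gamma_1 \neq \emptyset$, where indeed $\|u\|_V^2 = \int_\Omega |\nabla u|^2 + \int_{\Gamma_0}|\nabla_\Gamma u|^2 = \langle Au,u\rangle_{V^\ast,V}$. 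When $\Gamma_1 = \emptyset$ that quantity is only a seminorm (it vanishes on constants), and the paper instead equips $V$ with the full $H^1(\Omega)\times H^1(\Gamma_0)$ inner product, for which $\langle u,v\rangle_V = \langle Au,v\rangle_{V^\ast,V} + \langle u,v\rangle_H$. Redoing the computation with this inner product gives
\begin{equation}
\re\langle \A(u,v),(u,v)\rangle_\H = -\re\langle u,v\rangle_H + \|B^\ast v\|^2_{L^2(\Gamma_0)},
\end{equation}
whose first term is not sign-definite, so $-\A$ is \emph{not} dissipative on $\H$ in that case. This is precisely why the paper's one-line proof says ``up to a bounded perturbation if $\Gamma_1 = \emptyset$'': one shows $m$-dissipativity for the operator $\A_1$ obtained by replacing $A$ with $A+1$ (so that $\langle (A+1)u,u\rangle_{V^\ast,V}$ \emph{is} the $V$-norm), then recovers $\A = \A_1 + (\text{bounded})$ via the bounded perturbation theorem. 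Alternatively, one absorbs the extra term by Cauchy--Schwarz to get quasi-dissipativity $\re\langle\A z,z\rangle_\H \geq -\tfrac12\|z\|_\H^2$, which combined with your range condition still yields generation, with $\|e^{-t\A}\|\leq e^{t/2}$ rather than contractivity. Your closing sentence gestures toward this distinction, but the dissipativity claim as written is false when $\Gamma_1 = \emptyset$ and needs one of these fixes to become a proof.
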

\begin{proof}
Up to a bounded perturbation if $\Gamma_1 = \emptyset$, this is a standard application of the Lumer--Phillips theorem on contraction semigroups: use the special structure of \cref{eq:IBVP-abstract} along with, for instance, \cite[Chapter II, Corollary 3.20 and Proposition 3.23]{EngNag00book} and \cite[Chapter III, Theorem 1.3]{EngNag00book}.
\end{proof}
\begin{rem}
In fact, $\{e^{-t\A}\}_{t\geq0}$ is a strongly continuous \emph{group}.
\end{rem}
Therefore, the unique \emph{weak} solution $u$ (in the sense of \cref{eq:weak-form,eq:IBVP-abstract})  to \cref{eq:IBVP} with initial data $(u_0, u_1) \in \H$ is given by $(u(\cdot, t), \partial_tu(\cdot, t)) = e^{-t\A}(u_0, u_1)$ for all $t\geq 0$, so that \begin{equation}
u \in \C([0, +\infty), V) \cap \C^1([0, +\infty), H).
\end{equation}
In addition, when $(u_0, u_1) \in \dom(\A) = \dom(A) \times V$, then $u \in \C([0, +\infty), \dom(A)) \cap \C^1([0, +\infty), V)$ is a \emph{classical} solution in the semigroup sense and satisfies \cref{eq:IBVP} in a pointwise a.e.\ fashion.
We conclude this section with an easy but important fact concerning the operator $\A$.

\begin{lemma}
\label{lem:A-eigen}
The operator $\A$ has compact resolvent and in particular its spectrum $\sigma(\A)$ is made entirely of isolated eigenvalues with finite multiplicity. Also, $\ker(\A) = \ker(A) \times \{0\}$.
\end{lemma}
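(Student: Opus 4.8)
The plan is to establish the three assertions in turn, using the block structure of $\A$ and the already-established properties of $A$. First, for the compactness of the resolvent: I would show that some point of the complex plane belongs to the resolvent set of $\A$ and that $(\lambda - \A)^{-1}$ is compact there. The cleanest route is to work at $\lambda = 0$ when $\Gamma_1 \neq \emptyset$ (so that $A$ is boundedly invertible from $H$ onto $\dom(A)$), or more generally to pick $\lambda$ in the resolvent set guaranteed by \cref{prop:wp} (by Hille--Yosida, all sufficiently large real $\lambda$ work, or the whole left half-plane minus a strip if the semigroup is a contraction). For such $\lambda$, solving $(\lambda - \A)(u, v) = (f, g)$ amounts to $v = \lambda u - f$ and $(\lambda^2 + \lambda BB^\ast + A)u = g + (\lambda + BB^\ast)f$, and since $A$ is self-adjoint with compact resolvent, $\lambda^2 + \lambda BB^\ast + A$ is a compact perturbation of $\lambda^2 + A$, hence Fredholm of index zero; injectivity (which follows from $\lambda \in \rho(\A)$) then gives invertibility, and the inverse maps $H$ into $\dom(A)$, which embeds compactly into $V$ by the Rellich--Kondrachov remark following \cref{eq:pivot}. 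Tracking the components shows $(\lambda - \A)^{-1}$ maps $\H$ into $\dom(A) \times V$, which embeds compactly into $\H = V \times H$. Once one point of $\rho(\A)$ yields a compact resolvent, the resolvent identity propagates compactness to all of $\rho(\A)$, and the analytic Fredholm theorem (or the standard spectral theory of operators with compact resolvent) gives that $\sigma(\A)$ consists of isolated eigenvalues of finite algebraic multiplicity.

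Next, the kernel computation. If $(u, v) \in \ker(\A)$ with $(u,v) \in \dom(A) \times V$, then by definition $\A(u,v) = (-v, Au + BB^\ast v) = (0,0)$, so $v = 0$ and $Au = 0$, i.e.\ $u \in \ker(A)$; conversely $u \in \ker(A)$ gives $(u, 0) \in \dom(A) \times V = \dom(\A)$ with $\A(u,0) = (0, 0)$. Hence $\ker(\A) = \ker(A) \times \{0\}$. (One may additionally note, though it is not asked, that $\ker(A) = \{0\}$ when $\Gamma_1 \neq \emptyset$ by strict positivity, and $\ker(A)$ consists of constants when $\Gamma_1 = \emptyset$, by \cref{eq:A-dual}: $\langle Au, u\rangle = \|\nabla u\|_{L^2(\Omega)}^2 + \|\nabla_\Gamma u\|_{L^2(\Gamma_0)}^2 = 0$ forces $u$ constant.)

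I do not expect any serious obstacle here; the statement is essentially bookkeeping on top of \cref{prop:carac-dom-A} and \cref{prop:wp}. The one point requiring a little care is making sure that the resolvent of $\A$ genuinely lands in a space compactly embedded in $\H$: one must verify that \emph{both} components are controlled, the first in $\dom(A) \hookrightarrow\hookrightarrow V$ and the second in $V \hookrightarrow\hookrightarrow H$, rather than only the first. This is immediate from the formula $v = \lambda u - f$ together with $u \in \dom(A) \subset V$ and $f \in V$, but it is the place where a hasty argument could slip. Everything else — Fredholmness of the quadratic pencil via compact perturbation, propagation of compactness along the resolvent set, and the discreteness of the spectrum — is standard.
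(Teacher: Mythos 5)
Your proposal is correct and follows the same essential idea as the paper: use the compactness of the resolvent of $A$ to derive compactness of the resolvent of $\A$, then invoke the standard spectral theory of operators with compact resolvent. The paper's proof is essentially a one-line citation of the Riesz--Schauder theory (it does not spell out how compactness passes from $A$ to $\A$), whereas you supply the intermediate bookkeeping explicitly; both are fine.

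Two small remarks. First, there is a sign slip in the notation: you write $(\lambda - \A)(u,v) = (f,g)$ but your relations $v = \lambda u - f$ and $(A + \lambda BB^\ast + \lambda^2)u = g + (\lambda + BB^\ast)f$ are in fact what one gets from $(\lambda + \A)(u,v) = (f,g)$, which is the natural object here since the generator is $-\A$. The subsequent computation is internally consistent, so this is cosmetic. Second, the Fredholm detour is unnecessary: once you have taken $\lambda$ in the resolvent set of $-\A$, invertibility of the quadratic pencil $P(\lambda) = A + \lambda BB^\ast + \lambda^2$ on $\dom(A)$ already follows from the explicit reduction you wrote down, so there is no need to argue index zero plus injectivity. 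The crucial and correct observation is the one you single out at the end: $P(\lambda)^{-1}$ maps $H$ boundedly into $\dom(A)$ with the graph norm, so $(\lambda+\A)^{-1}$ maps $\H$ into $\dom(A)\times V$, and since $\dom(A)\hookrightarrow V$ and $V\hookrightarrow H$ are both compact embeddings (\cref{prop:carac-dom-A} and the Rellich--Kondrachov remark after \cref{eq:pivot}), the resolvent is compact on $\H$. The kernel computation is immediate and matches the paper.
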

\begin{proof}
The first statement readily follows from compactness of the resolvent of $A$ and the Riesz--Schauder theory for compact operators; see, e.g., \cite[Section X.5, Theorems 1 and 2]{Yos65book} or \cite[Chapter IV, Corollary 1.19]{EngNag00book}. The second statement is clear.
\end{proof}

\subsubsection{Energy and stability}
\label{sec:ener-stab}

Clearly, the energy $E$ defined in \cref{eq:energy} as well as the energy identity \cref{eq:energy-id} make sense for all weak solutions to \cref{eq:IBVP}. In the case $\Gamma_1 \not = \emptyset$, recalling our normalization of $V$, for all solutions $u$ to \cref{eq:IBVP} with initial data $(u_0, u_1) \in \H$ we have
\begin{equation}
\label{eq:energy-sg}
E(u, t) = \frac{1}{2}\|(u(\cdot, t), \partial_t u(\cdot, t))\|^2_\H = \frac{1}{2}\|e^{-t\A}(u_0, u_1)\|^2_\H, \quad t \geq 0.
\end{equation}
On the other hand, if $\Gamma_1 = \emptyset$ then the energy only defines a seminorm on $\H$. Following \cite[Section 4]{AnaLea14} we  consider a Riesz projector $\Pi \in \L(\H)$ defined by
\begin{equation}
\Pi = \frac{1}{2\i \pi} \int_\gamma (\A + z)^{-1} \, \d z,
\end{equation}
where $\gamma$ is a positively oriented circle in $\Co$ centered on $0$ and of radius small enough so that by \cref{lem:A-eigen} the intersection of its interior with the spectrum of $\A$ is either reduced to $\{0\}$ if $\Gamma_1$ is empty, or else  empty, in which case $\Pi = 0$. Then, $\Pi \H = \ker(\A)$, $\dot{\H} \triangleq (1 - \Pi) \H$ equipped with the norm
\begin{equation}
\label{eq:norm-energy}
\|(u_0, u_1)\|^2_{\dot{\H}} \triangleq \langle Au_0, u_0\rangle_{V^\ast,V} + \|u_1\|^2_H, \quad (u_0, u_1) \in \dot{\H},
\end{equation}
is a Hilbert space and, as a closed subspace of $\H$, is in topological direct sum with $\ker(\A)$. Furthermore, the operator $\dot{\A} \triangleq \A|_{\dot{\H}}$ with domain $\dom(\dot{\A}) \triangleq \dom(\A) \cap \dot{\H}$ generates a contraction semigroup \smash{$\{e^{-t\dot{\A}}\}_{t\geq 0}$} on $\dot{\H}$ and
\begin{equation}
\label{eq:dec-sg}
e^{-t\A}(u_0, u_1) =  e^{-t\dot{\A}}(1 - \Pi)(u_0, u_1) + \Pi(u_0, u_1), \quad t \geq 0, \quad (u_0, u_1) \in \H;
\end{equation}
see \cite[Lemma 4.3]{AnaLea14}. Note that $\sigma(\dot{\A}) = \sigma(\A) \setminus \{0\}$. As a consequence of \cref{eq:norm-energy,eq:dec-sg},  for all solutions $u$ to \cref{eq:IBVP} with initial data $(u_0, u_1) \in \H$,
\begin{equation}
\label{eq:energy-quotient-sg}
E(u, t) = \frac{1}{2} \|e^{-t\dot{\A}}(1 - \Pi)(u_0, u_1)\|^2_{\dot{\H}} = \frac{1}{2}\|(1 - \Pi)e^{-t\A}(u_0, u_1)\|^2_{\dot{\H}}, \quad t \geq 0.
\end{equation}
(In the case $\Gamma_1 \not = \emptyset$, \cref{eq:energy-sg,eq:energy-quotient-sg} are the same.) It is also convenient to define for smoother solutions a higher-order energy $E_1$ as follows: for (classical) solutions $u$ to \cref{eq:IBVP} with initial data $(u_0, u_1) \in \dom(A) \times V$, and for $t \geq 0$,
\begin{equation}
\label{eq:high-order-e}
E_1(u, t) \triangleq \frac{1}{2} \int_\Omega |\nabla u|^2 + |\Delta u|^2 + |\partial_t u|^2 + |\nabla \partial_t u|^2 \, \d x  +  \frac{1}{2} \int_{\Gamma_0} |\Delta_\Gamma u|^2 + |\partial_t u|^2 + |\nabla_\Gamma \partial_t u|^2 \, \d \sigma.
\end{equation}
Note that \cref{eq:high-order-e} makes sense for such solutions due to \cref{prop:carac-dom-A}; in fact, $E_1(u, \cdot)$ is a continuous and bounded function of the time variable. The following lemmas relate $E_1$ to the graph norm of $\dot{\A}$

\begin{lemma}\label{lem:poinc} There exists $K > 0$ such that
\begin{equation}
\label{eq:poinc}
\frac{1}{2K} \|Au\|^2_H \leq  \mleft( \int_\Omega |\Delta u|^2  + |\nabla u|^2 \, \d x +  \int_{\Gamma_0} |\Delta_\Gamma u|^2  \, \d \sigma \mright) \leq \frac{K}{2}\|Au\|^2_H, \quad u \in \dom(A).
\end{equation}
\end{lemma}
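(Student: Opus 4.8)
The plan is to establish \cref{eq:poinc} by recalling that, by \cref{prop:carac-dom-A}, $Au = (-\Delta u, -\Delta_\Gamma u + \partial_\nu u)$ for $u \in \dom(A)$, so that
\begin{equation*}
\|Au\|_H^2 = \int_\Omega |\Delta u|^2 \, \d x + \int_{\Gamma_0} |{-\Delta_\Gamma u + \partial_\nu u}|^2 \, \d \sigma.
\end{equation*}
The only genuine discrepancy between $\|Au\|_H^2$ and the middle quantity in \cref{eq:poinc} is therefore the interior term $\int_\Omega |\nabla u|^2 \, \d x$ versus the boundary term $\int_{\Gamma_0} |\partial_\nu u|^2 \, \d \sigma$ (and the cross term $\int_{\Gamma_0} \re(\Delta_\Gamma u \, \overline{\partial_\nu u}) \, \d \sigma$). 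So the crux is to control $\int_\Omega |\nabla u|^2$ and $\int_{\Gamma_0} |\partial_\nu u|^2$ by the right-hand side of \cref{eq:poinc}, after which both inequalities follow by elementary manipulations (triangle inequality, Young's inequality, and the equivalence of the graph norm of $A$ with $\|\cdot\|_{H^2(\Omega)} + \|\cdot\|_{H^2(\Gamma_0)}$ from \cref{prop:carac-dom-A}).

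First I would handle the gradient term. Using Green's formula with $u \in \dom(A)$, $\int_\Omega |\nabla u|^2 \, \d x = -\int_\Omega \Delta u \, \overline{u} \, \d x + \int_\Gamma \partial_\nu u \, \overline{u} \, \d \sigma$; on $\Gamma_1$ the boundary term vanishes since $u|_{\Gamma_1} = 0$, while on $\Gamma_0$ we rewrite $\partial_\nu u = (-\Delta_\Gamma u + \partial_\nu u) + \Delta_\Gamma u$ and integrate the $\Delta_\Gamma u$ contribution by parts on the closed manifold $\Gamma_0$, picking up $\int_{\Gamma_0} \nabla_\Gamma u \cdot \overline{\nabla_\Gamma u} \, \d \sigma = \int_{\Gamma_0}|\nabla_\Gamma u|^2 \, \d\sigma$, which is itself controlled by $\|Au\|_H \|u\|_H$ via another integration by parts (or directly by the definition of $\dom(A)$). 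Combining and using Cauchy--Schwarz together with a Poincaré-type inequality valid on $\dom(A)$ — namely $\|u\|_H \lesssim \|Au\|_H$, which holds because $A$ is self-adjoint with compact resolvent and, when $\Gamma_1 = \emptyset$, one restricts to the complement of $\ker A$; but here we only need the crude bound on all of $\dom(A)$, which is fine since the statement's constant $K$ is allowed to depend only on $\Omega$ — one gets $\int_\Omega |\nabla u|^2 \, \d x \lesssim \|Au\|_H^2$. (If one wants to avoid any issue with $\ker A$ when $\Gamma_1 = \emptyset$, observe that the inequality \cref{eq:poinc} is trivially true on $\ker A$, both sides being zero, so it suffices to prove it on a complement.)

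Next I would bound the normal trace. Since $u \in H^2(\Omega)$ by \cref{prop:carac-dom-A}, the trace theorem gives $\|\partial_\nu u\|_{H^{1/2}(\Gamma)} \lesssim \|u\|_{H^2(\Omega)}$, hence $\|\partial_\nu u\|_{L^2(\Gamma_0)}^2 \lesssim \|u\|_{H^2(\Omega)}^2$; and $\|u\|_{H^2(\Omega)}^2 \lesssim \|Au\|_H^2$ by the norm equivalence in \cref{prop:carac-dom-A} (again, trivially or after quotienting by $\ker A$). With $\int_\Omega|\nabla u|^2$ and $\int_{\Gamma_0}|\partial_\nu u|^2$ now both bounded by $\|Au\|_H^2$, the right-hand inequality in \cref{eq:poinc} follows by writing $-\Delta_\Gamma u = (-\Delta_\Gamma u + \partial_\nu u) - \partial_\nu u$ and using the triangle inequality in $L^2(\Gamma_0)$, together with the trivial bound $\int_\Omega |\Delta u|^2 \leq \|Au\|_H^2$. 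For the left-hand inequality, expand $\|Au\|_H^2 = \int_\Omega |\Delta u|^2 + \int_{\Gamma_0}|\Delta_\Gamma u|^2 - 2\re\int_{\Gamma_0} \Delta_\Gamma u \, \overline{\partial_\nu u} + \int_{\Gamma_0}|\partial_\nu u|^2$, then bound the cross term and the $|\partial_\nu u|^2$ term by $C(\int_\Omega|\nabla u|^2 + \int_\Omega|\Delta u|^2 + \int_{\Gamma_0}|\Delta_\Gamma u|^2)$ using the normal-trace estimate above and Young's inequality, absorbing a small multiple of $\int_{\Gamma_0}|\Delta_\Gamma u|^2$ into the positive term if necessary. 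I expect the main obstacle to be purely bookkeeping: keeping the normal-trace estimate and the ensuing constants uniform, and making sure the $\ker A$ case (when $\Gamma_1 = \emptyset$) is dispatched cleanly — but conceptually this is routine elliptic-trace material, so there is no deep difficulty.
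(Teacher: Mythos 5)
Your treatment of the left-hand inequality $\|Au\|_H^2 \lesssim \text{middle}$ is circular as written, and this is a genuine gap. The normal-trace bound you establish is $\int_{\Gamma_0}|\partial_\nu u|^2\,\d\sigma \lesssim \|u\|_{H^2(\Omega)}^2 \lesssim \|Au\|_H^2$, i.e.\ a bound in terms of $\|Au\|_H^2$, not in terms of the middle quantity of \cref{eq:poinc}. When you expand $\|Au\|_H^2$ and try to absorb the $\int_{\Gamma_0}|\partial_\nu u|^2$ and cross terms, this estimate gives you only $\|Au\|_H^2 \leq 2\,(\text{middle}) + 2C\|Au\|_H^2$ with a fixed constant $C$ that has no reason to be smaller than $1/2$, so nothing can be concluded. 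Put differently, to close the loop you would need $\|Au\|_H^2 \lesssim \text{middle}$, which is exactly the inequality you are proving. (Your Green's-formula argument for $\int_\Omega|\nabla u|^2 \lesssim \|Au\|_H^2$, your reduction to a complement of $\ker A$ when $\Gamma_1=\emptyset$, and the right-hand inequality via $-\Delta_\Gamma u = (-\Delta_\Gamma u + \partial_\nu u) - \partial_\nu u$ are all fine; and the mid-paragraph remark that the Poincaré bound holds ``on all of $\dom(A)$'' should simply be deleted, since you correct it a sentence later.)

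There are two natural repairs. The paper's route avoids the normal trace altogether: it uses \emph{interior} elliptic regularity, $\|\tilde u\|_{H^2(\Omega)} \lesssim \|\Delta \tilde u\|_{L^2(\Omega)} + \|\tilde u\|_{L^2(\Omega)} + \|\tilde u\|_{H^{3/2}(\Gamma_0)}$, together with elliptic regularity on the closed manifold $\Gamma_0$, $\|\tilde u\|_{H^2(\Gamma_0)} \lesssim \|\Delta_\Gamma \tilde u\|_{L^2(\Gamma_0)} + \|\tilde u\|_{L^2(\Gamma_0)}$, and Poincaré(--Wirtinger) plus trace continuity to dominate the low-order terms by $\|\nabla u\|_{L^2(\Omega)}$. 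Here $\tilde u$ is $u$ minus its mean over $\Omega$ when $\Gamma_1 = \emptyset$, which plays the same role as your quotienting by $\ker A$. This bounds $\|\tilde u\|_{H^2(\Omega)} + \|\tilde u\|_{H^2(\Gamma_0)}$ directly by the square root of the middle quantity, and the left-hand inequality then follows from the graph-norm equivalence of \cref{prop:carac-dom-A}. Alternatively, if you want to retain your expansion of $\|Au\|_H^2$, you must sharpen the trace estimate to an interpolation form, e.g.\ $\|\partial_\nu u\|_{L^2(\Gamma_0)}^2 \leq \eps\|u\|_{H^2(\Omega)}^2 + C_\eps\|u\|_{H^1(\Omega)}^2$; combined with $\|u\|_{H^2(\Omega)}^2 \lesssim \|Au\|_H^2$ and $\|u\|_{H^1(\Omega)}^2 \lesssim \text{middle}$, this gives $\int_{\Gamma_0}|\partial_\nu u|^2 \leq \eps'\|Au\|_H^2 + C_{\eps'}\,(\text{middle})$ with $\eps'$ as small as needed, and the absorption then goes through.
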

\begin{proof}
Given $u \in \dom(A)$, define $\tilde{u}$ a.e.\ by
\begin{equation}
\tilde{u} \triangleq \left \{
\begin{aligned}
&u - \frac{1}{|\Omega|} \int_{\Omega}  u \, \d x &&\mbox{if}~\Gamma_1 = \emptyset, \\
&u &&\mbox{if}~\Gamma_1 \not = \emptyset,
\end{aligned}
\right.
\end{equation}
where $|\Omega|$ denotes the Lebesgue measure of $\Omega$; then, $\tilde{u} \in \dom(A)$.
In what follows, $K, K'\dots$ denote positive constants that may change from line to line but  are independent of the choice of $u \in \dom(A)$.
First, by \cref{prop:carac-dom-A},
\begin{equation}
\|Au\|_H = \|A \tilde{u}\|_H\leq K \left(\|\tilde{u}\|_{H^2(\Omega)} +  \|\tilde{u}\|_{H^2(\Gamma_0)}\right) \leq K' \|Au\|_H, \quad u \in \dom(A).
\end{equation}
This already gives the right inequality in \cref{eq:poinc}.
Furthermore,
 elliptic theory \cite{LioMag68book} yields 
\begin{equation}
\|\tilde{u}\|_{H^2(\Omega)} \leq K \mleft ( \|\Delta \tilde{u}\|_{L^2(\Omega)}  + \|\tilde{u}\|_{L^2(\Omega)}+ \|\tilde{u}\|_{H^{3/2}(\Gamma_0)} \mright), \quad u \in \dom(A).
\end{equation}
By the Poincar\'e--Wirtinger  inequality (if $\Gamma_1 = \emptyset$) or the Poincar\'e-type inequality for functions vanishing on $\Gamma_1$ (if $\Gamma_1 \not = \emptyset$),
\begin{equation}
\|\tilde{u}\|_{L^2(\Omega)} \leq K  \|\nabla \tilde{u}\|_{L^2(\Omega)^{d}} = K \|\nabla u\|_{L^2(\Omega)^{d}}, \quad u \in \dom(A).
\end{equation}
On the other hand, with elliptic theory on $\Gamma_0$  \cite{Tay11book} and trace continuity we obtain
\begin{equation}
\|\tilde{u}\|_{H^{2}(\Gamma_0)} \leq K \left ( \|\Delta_\Gamma \tilde{u}\|_{L^2(\Gamma_0)}  + \|\tilde{u}\|_{L^2(\Gamma_0)} \right) \leq K \|\Delta_\Gamma u \|_{L^2(\Gamma_0)} + K' \|\nabla u\|_{L^2(\Omega)^d}, \quad u \in \dom(A).
\end{equation}
The left part of \cref{eq:poinc} follows at once.
\end{proof}

\begin{lemma}
\label{lem:sandwich-hot}
There exists $K > 0$ such that, for all (classical) solutions $u$ to \cref{eq:IBVP} with initial data $(u_0, u_1) \in \dom(A) \times V$,
\begin{equation}
\label{eq:sandwich-hot}
\frac{1}{2K} \| \dot{\A}(1 - \Pi)e^{-t\A}(u_0, u_1)\|^2_{\dot{\H}} \leq E_1(u, t) \leq \frac{K}{2} \| \dot{\A}(1 - \Pi)e^{-t\A}(u_0, u_1)\|^2_{\dot{\H}}, \quad t \geq 0.
\end{equation}
\end{lemma}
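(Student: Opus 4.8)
The plan is to compute the right-hand side of \cref{eq:sandwich-hot} explicitly and then match it term by term with the definition \cref{eq:high-order-e} of $E_1$, using \cref{prop:carac-dom-A}, \cref{lem:poinc}, and elementary Poincaré and trace inequalities. Fix $(u_0,u_1)\in\dom(A)\times V=\dom(\A)$, let $u$ be the corresponding classical solution, so that $(u(t),\partial_t u(t))=e^{-t\A}(u_0,u_1)\in\dom(\A)$ and $(\partial_t u(t),\partial_t^2 u(t))=-\A(u(t),\partial_t u(t))\in\H$ for all $t\geq0$, and let $K,K',\dots$ denote positive constants independent of $(u_0,u_1)$ and $t$, possibly changing from line to line. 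Since $\Pi$ commutes with $\A$ and with $e^{-t\A}$ and fixes $\ker\A$, one has $(1-\Pi)e^{-t\A}(u_0,u_1)=(u(t)-c,\partial_t u(t))\in\dom(\dot\A)$, where $c\in\ker A$ is the (time-independent) first component of $\Pi(u_0,u_1)$ --- with $c=0$ and $\Pi=0$ when $\Gamma_1\neq\emptyset$. Applying $\dot\A=\A|_{\dot\H}$, using $Ac=0$ and the explicit form of $A$ from \cref{prop:carac-dom-A}, and then invoking \cref{eq:wave-equation,eq:HBC}, the ``elliptic'' second component collapses: $Au(t)+BB^\ast\partial_t u(t)=(-\Delta u,\,-\Delta_\Gamma u+\partial_\nu u+\partial_t u)=(-\partial_t^2 u,\,-\partial_t^2 u)=-\partial_t^2 u(t)$ in $H$ (this is just the abstract identity $u''=-(Au+BB^\ast u')$). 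Hence $\dot\A(1-\Pi)e^{-t\A}(u_0,u_1)=(-\partial_t u(t),\,-\partial_t^2 u(t))$, so by \cref{eq:norm-energy,eq:A-dual},
\begin{equation*}
\Phi(t)\triangleq\bigl\|\dot\A(1-\Pi)e^{-t\A}(u_0,u_1)\bigr\|^2_{\dot\H}=\int_\Omega\bigl(|\nabla\partial_t u|^2+|\partial_t^2 u|^2\bigr)\,\d x+\int_{\Gamma_0}\bigl(|\nabla_\Gamma\partial_t u|^2+|\partial_t^2 u|^2\bigr)\,\d\sigma.
\end{equation*}

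Since $\dot\A$ maps $\dom(\dot\A)$ into $\dot\H$, the pair $(\partial_t u(t),\partial_t^2 u(t))$ lies in $\dot\H$; so does $(u(t)-c,\partial_t u(t))$. I would next record a Poincaré-type inequality on $\dot\H$: because $\dot\H$ with the norm \cref{eq:norm-energy} is a Hilbert space that embeds as a closed subspace of $\H$ (cf.\ \cite[Lemma 4.3]{AnaLea14}), the norms $\|\cdot\|_{\dot\H}$ and $\|\cdot\|_\H$ are equivalent on $\dot\H$; combined with the usual Poincaré inequality for functions vanishing on $\Gamma_1$ (if $\Gamma_1\neq\emptyset$) and continuity of the trace $H^1(\Omega)\to L^2(\Gamma)$, this gives $\|v\|^2_{L^2(\Omega)}+\|v|_{\Gamma_0}\|^2_{L^2(\Gamma_0)}\leq K\bigl(\langle Av,v\rangle_{V^\ast,V}+\|w\|^2_H\bigr)$ for all $(v,w)\in\dot\H$. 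Applying this to $(\partial_t u(t),\partial_t^2 u(t))$ and comparing with the formula for $\Phi$ bounds $\|\partial_t u(t)\|^2_{L^2(\Omega)}+\|\partial_t u(t)|_{\Gamma_0}\|^2_{L^2(\Gamma_0)}$ by $K\Phi(t)$. Moreover, by \cref{prop:carac-dom-A} and \cref{eq:wave-equation,eq:HBC} again, $Au(t)=(-\partial_t^2 u,\,-\partial_t^2 u-\partial_t u)$, so $\|Au(t)\|^2_H\leq 2\|\partial_t^2 u\|^2_H+2\|\partial_t u\|^2_{L^2(\Gamma_0)}\leq K\Phi(t)$, whence \cref{lem:poinc} yields $\int_\Omega(|\Delta u|^2+|\nabla u|^2)\,\d x+\int_{\Gamma_0}|\Delta_\Gamma u|^2\,\d\sigma\leq K'\Phi(t)$.

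With these ingredients in hand both inequalities close. For the upper bound, compare \cref{eq:high-order-e} with the formula for $\Phi$: the contributions $\int_\Omega|\nabla\partial_t u|^2$, $\int_{\Gamma_0}|\nabla_\Gamma\partial_t u|^2$ and $\int_\Omega|\Delta u|^2=\int_\Omega|\partial_t^2 u|^2$ to $2E_1$ are directly $\leq\Phi$, while $\int_\Omega|\partial_t u|^2$, $\int_{\Gamma_0}|\partial_t u|^2$, $\int_\Omega|\nabla u|^2$ and $\int_{\Gamma_0}|\Delta_\Gamma u|^2$ are all $\leq K\Phi$ by the previous paragraph; summing gives $E_1(u,t)\leq(K/2)\Phi(t)$. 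For the lower bound, $\int_\Omega|\nabla\partial_t u|^2+\int_{\Gamma_0}|\nabla_\Gamma\partial_t u|^2$ and $\int_\Omega|\partial_t^2 u|^2=\int_\Omega|\Delta u|^2$ are $\leq 2E_1$ outright, and on $\Gamma_0$ the boundary condition gives $\partial_t^2 u=\Delta_\Gamma u-\partial_\nu u-\partial_t u$, so $\int_{\Gamma_0}|\partial_t^2 u|^2\leq K\bigl(\|\Delta_\Gamma u\|^2_{L^2(\Gamma_0)}+\|\partial_t u\|^2_{L^2(\Gamma_0)}+\|\partial_\nu u\|^2_{L^2(\Gamma_0)}\bigr)$; the first two terms are $\leq 2E_1$, and by trace continuity together with \cref{prop:carac-dom-A} and the estimates in the proof of \cref{lem:poinc} (applied to $u(t)-c$, whose trace, gradient and Laplacians agree with those of $u(t)$) the last is $\leq K'\|u(t)-c\|^2_{H^2(\Omega)}\leq K''\|Au(t)\|^2_H\leq K'''\bigl(\int_\Omega(|\Delta u|^2+|\nabla u|^2)+\int_{\Gamma_0}|\Delta_\Gamma u|^2\bigr)\leq K''''E_1(u,t)$. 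Hence $\Phi(t)\leq KE_1(u,t)$, and together with the upper bound this establishes \cref{eq:sandwich-hot}.

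The only genuinely delicate step is the first one: handling the Riesz projection $\Pi$ cleanly (via its commutation with $\A$ and $e^{-t\A}$ and the fact that it fixes $\ker\A$), and recognizing that the elliptic part $Au+BB^\ast\partial_t u$ of $\dot\A(1-\Pi)e^{-t\A}(u_0,u_1)$ simplifies to $-\partial_t^2 u$ through the PDE --- this is precisely what places $(\partial_t u,\partial_t^2 u)$ inside $\dot\H$ and unlocks the Poincaré inequality there. Everything afterwards is routine bookkeeping with \cref{lem:poinc} and elliptic/trace estimates, with constants that are manifestly uniform in $t$ and in the data.
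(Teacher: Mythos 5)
Your proof is correct, but takes a different route from the paper's. The paper reduces to $t=0$ (observing that $(u(t),\partial_t u(t))$ is again an element of $\dom(A)\times V$ that can serve as initial data) and then works purely at the operator level: it computes $\|\dot\A(1-\Pi)(u_0,u_1)\|^2_{\dot\H}=\langle Au_1,u_1\rangle_{V^\ast,V}+\|Au_0+BB^\ast u_1\|^2_H$ without ever invoking the PDE, absorbs the $BB^\ast u_1$ term using boundedness of $BB^\ast$ together with the coercivity of the $\dot\H$-norm, and concludes with a single application of \cref{lem:poinc} to $u_0$. You instead stay at time $t$ and use \cref{eq:wave-equation,eq:HBC} to identify $Au+BB^\ast\partial_t u=-\partial_t^2 u$, which makes $\dot\A(1-\Pi)e^{-t\A}(u_0,u_1)=(-\partial_t u,-\partial_t^2 u)$ fully explicit, and then match $\Phi(t)$ against $E_1(u,t)$ term by term. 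The two intermediate quantities coincide after unwinding the PDE, so the arguments are parallel in substance; the paper's version is shorter because it stays abstract and never has to re-derive the trace bound on $\partial_\nu u$, which in your lower-bound step is handled separately via trace continuity, \cref{prop:carac-dom-A}, and the estimates inside the proof of \cref{lem:poinc}. One thing your write-up makes explicit that the paper leaves implicit is the need for the equivalence of $\|\cdot\|_{\dot\H}$ and $\|\cdot\|_\H$ on $\dot\H$ (from \cite[Lemma 4.3]{AnaLea14}) in the case $\Gamma_1=\emptyset$; the paper's phrase ``since $BB^\ast\in\L(H)$'' is not by itself sufficient to control the term $\|u_1\|^2_H$, so spelling out the Poincaré-type inequality on $\dot\H$ as you do is a genuine clarity improvement.

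Two small remarks. First, when you invoke ``$\Pi$ commutes with $\A$ and with $e^{-t\A}$ and fixes $\ker\A$,'' this is indeed what justifies writing $(1-\Pi)e^{-t\A}(u_0,u_1)=(u(t)-c,\partial_t u(t))$ with $c$ time-independent, and it is worth noting that the same commutation also gives $(1-\Pi)\A(u_0,u_1)=\A(u_0,u_1)$ since $\Pi\A(u_0,u_1)=\A\Pi(u_0,u_1)=0$ — the identity the paper uses. Second, your estimate $\|\partial_\nu u\|_{L^2(\Gamma_0)}^2\leq K\|u-c\|^2_{H^2(\Omega)}\leq K'\|Au\|^2_H$ silently relies on the equivalence of $\|\cdot\|_{H^2(\Omega)}+\|\cdot\|_{H^2(\Gamma_0)}$ with the graph norm of $A$ stated at the end of \cref{prop:carac-dom-A}; this is available, but deserves a citation since it is where the harder elliptic-regularity content is hidden.
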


\begin{proof}
Recall that solutions to \cref{eq:IBVP} with data in $\dom(A)\times V$ remain in $\dom(A) \times V$; as a result it suffices to prove \cref{eq:sandwich-hot} for $t = 0$. For all $(u_0, u_1) \in \dom(A) \times V$,
 we have
\begin{equation}
\label{eq:A-pi-A}
\begin{aligned}
\frac{1}{2} \|\dot{\A}(1 - \Pi)(u_0, u_1)\|^2_{\dot{\H}} &=  \frac{1}{2} \|(1 - \Pi) \A (u_0, u_1)\|^2_{\dot{\H}} \\ 
&= \frac{1}{2} \langle Au_1, u_1 \rangle_{V^\ast,V} + \frac{1}{2} \|Au_0 + BB^\ast u_1\|^2_H.
\end{aligned}
\end{equation}
Since $BB^\ast \in \L(H)$, we readily deduce from \cref{eq:A-pi-A} that
there exists a constant $K > 0$ such that
\begin{equation}
\frac{1}{K} \|\dot{\A}(1 - \Pi)(u_0, u_1)\|^2_{\dot{\H}} \leq \langle Au_1, u_1 \rangle_{V^\ast,V} + \|u_1\|^2_H + \|Au_0\|^2_H \leq K \|\dot{\A}(1 - \Pi)(u_0, u_1)\|^2_{\dot{\H}}
\end{equation}
for all $(u_0, u_1) \in \dom(A) \times V$.
We complete the proof by applying \cref{lem:poinc} to $u_0$.
\end{proof}

Finally, we introduce a quadratic operator pencil $P$ defined by
\begin{equation}
\label{eq:pencil}
P(z) \triangleq A + zBB^\ast + z^2, \quad z \in \Co,
\end{equation}
which is related to the Laplace transform of the abstract evolution equation \cref{eq:IBVP-abstract}. For $z \in \Co$, $P(z)$ is well-defined as a map in $\L(V, V^\ast)$ but also as an unbounded operator on $H$ with $\dom(P(z)) = \dom(A)$. 
We are now ready to state and prove a first stability result.

\begin{prop}[Semi-uniform stability]
\label{prop:s-u-stab}
We have $\sigma(\dot{\A}) \cap \i \R = \emptyset$ and
\begin{equation}
\label{eq:s-u-stab}
\|e^{-t\dot{\A}}\dot{\A}^{-1}\|_{\L(\dot{\H})} \to 0, \quad t \to + \infty.
\end{equation}
Thus, there exists a rate $r : [0, +\infty) \to [0, +\infty)$ with $r(t) \to 0$ as $t \to + \infty$ such that, for (classical) solutions $u$ to \cref{eq:IBVP} with initial data $(u_0, u_1) \in \dom(A) \times V$,
\begin{equation}
\label{eq:s-u-energy}
E(u, t) \leq r(t) E_1(u, 0), \quad t \geq 0.
\end{equation}
Furthermore, for (weak) solutions $u$ to \cref{eq:IBVP} with initial data $(u_0, u_1) \in \H$,
\begin{equation}
\label{eq:as-stab-energy}
E(u, t) \to 0, \quad t \to + \infty.
\end{equation}
\end{prop}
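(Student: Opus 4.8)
The plan is to derive everything from an abstract semi-uniform stability theorem. Once we know that $\dot{\A}$ has no spectrum on the imaginary axis, the operator-norm decay \cref{eq:s-u-stab} will follow from a general result about bounded $C_0$-semigroups, and the energy estimates \cref{eq:s-u-energy} and \cref{eq:as-stab-energy} will then be extracted from the identity \cref{eq:energy-quotient-sg}, the sandwich estimate \cref{eq:sandwich-hot}, and a density argument.

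\emph{Absence of imaginary spectrum.} Since $\A$ has compact resolvent by \cref{lem:A-eigen}, $\sigma(\dot{\A}) = \sigma(\A)\setminus\{0\}$ consists of isolated eigenvalues, so it suffices to rule out eigenvalues of the form $\i\beta$ with $\beta\in\R\setminus\{0\}$. If $\A(u_0,u_1) = \i\beta(u_0,u_1)$ with $(u_0,u_1)\in\dom(\A)$, the block structure of $\A$ forces $u_1 = -\i\beta u_0$ and $P(-\i\beta)u_0 = 0$, where $P$ is the pencil \cref{eq:pencil}. Pairing this identity with $u_0$ in the $V^\ast$--$V$ duality and taking imaginary parts annihilates the real quantities $\langle Au_0, u_0\rangle_{V^\ast,V}$ and $\beta^2\|u_0\|_H^2$ and leaves $\beta\|u_0|_{\Gamma_0}\|^2_{L^2(\Gamma_0)} = 0$, so $u_0$ --- and hence also $u_1$ --- vanishes on $\Gamma_0$. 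Inserting this into the second component of $P(-\i\beta)u_0 = 0$ and using \cref{prop:carac-dom-A} gives $\partial_\nu u_0 = 0$ on $\Gamma_0$, while the first component says that $u_0$ solves $-\Delta u_0 = \beta^2 u_0$ in $\Omega$. Thus $u_0$ has vanishing Cauchy data on the nonempty relatively open set $\Gamma_0\subset\Gamma$; by Holmgren's uniqueness theorem and the standard unique continuation principle for $-\Delta$ with bounded potential, $u_0\equiv 0$, hence $u_1\equiv 0$, a contradiction. Therefore $\i\R\cap\sigma(\A)\subseteq\{0\}$, so $\i\R\cap\sigma(\dot{\A}) = \emptyset$ and in particular $0\in\rho(\dot{\A})$, which makes $\dot{\A}^{-1}\in\L(\dot{\H})$ well-defined.

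\emph{From the spectrum to the energy.} The semigroup $\{e^{-t\dot{\A}}\}_{t\geq0}$ is a contraction semigroup on $\dot{\H}$ and, by the previous step, $\i\R\subset\rho(\dot{\A})$; the Batty--Duyckaerts characterization of semi-uniform stability for bounded semigroups then yields exactly \cref{eq:s-u-stab}. For a classical solution $u$ with data $(u_0,u_1)\in\dom(\A)$, put $v\triangleq(1-\Pi)(u_0,u_1)$; since $\Pi\H = \ker(\A)\subset\dom(\A)$ we have $v\in\dom(\dot{\A})$, so $v = \dot{\A}^{-1}\dot{\A}v$ and, combining \cref{eq:energy-quotient-sg} with \cref{eq:sandwich-hot} at $t = 0$,
\[
2E(u,t) = \|e^{-t\dot{\A}}v\|^2_{\dot{\H}} \leq \|e^{-t\dot{\A}}\dot{\A}^{-1}\|^2_{\L(\dot{\H})}\,\|\dot{\A}v\|^2_{\dot{\H}} \leq 2K\,\|e^{-t\dot{\A}}\dot{\A}^{-1}\|^2_{\L(\dot{\H})}\,E_1(u,0),
\]
which proves \cref{eq:s-u-energy} with $r(t) \triangleq K\|e^{-t\dot{\A}}\dot{\A}^{-1}\|^2_{\L(\dot{\H})}\to 0$. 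For a weak solution with data $(u_0,u_1)\in\H$, set $w\triangleq(1-\Pi)(u_0,u_1)\in\dot{\H}$; given $\eps > 0$, choose $w_\eps\in\dom(\dot{\A})$ with $\|w - w_\eps\|_{\dot{\H}} < \eps$ (generators are densely defined), and use $\|e^{-t\dot{\A}}\|_{\L(\dot{\H})}\leq 1$ together with \cref{eq:energy-quotient-sg} to get
\[
\sqrt{2E(u,t)} = \|e^{-t\dot{\A}}w\|_{\dot{\H}} \leq \eps + \|e^{-t\dot{\A}}\dot{\A}^{-1}\|_{\L(\dot{\H})}\,\|\dot{\A}w_\eps\|_{\dot{\H}}.
\]
Letting $t\to+\infty$ and then $\eps\to 0$ gives \cref{eq:as-stab-energy}.

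\emph{Main obstacle.} There is no real difficulty: the argument rests on just two external inputs --- the unique continuation principle in the first step and the abstract semi-uniform stability theorem in the second --- and the remaining work is bookkeeping on the quotient space $\dot{\H}$; the only points deserving care are that $\dot{\A}^{-1}$ is bounded and that $1-\Pi$ maps $\dom(\A)$ into $\dom(\dot{\A})$, both of which are immediate once the imaginary axis has been cleared of spectrum and one recalls $\Pi\H\subset\dom(\A)$.
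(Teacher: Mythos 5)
Your proof follows the same route as the paper's: reduce to the absence of nonzero imaginary eigenvalues via compactness of the resolvent, take the imaginary part of a pairing to deduce $u=0$ on $\Gamma_0$, read off $\partial_\nu u=0$ from the boundary equation, invoke Holmgren's uniqueness theorem to conclude $u\equiv 0$, then convert the Batty--Duyckaerts operator-norm decay into energy decay via \cref{eq:energy-quotient-sg}, \cref{lem:sandwich-hot}, and a density argument. The only cosmetic differences are that you pair in $V^\ast$--$V$ duality rather than in $H$ and you write out the density step explicitly; both are equivalent to what the paper does.
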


\begin{proof}
Recall that the semigroup $\{e^{-t \dot{\A}}\}_{t \geq 0}$ is contractive and in particular uniformly bounded. By virtue of \cite[Theorem 1]{BatDuy08}, the property that $\sigma(\dot{\A}) \cap \i \R = \emptyset$ and \cref{eq:s-u-stab} are actually equivalent, so it suffices to prove the latter. Keeping in mind that the spectrum of $\dot{\A}$ is made of eigenvalues only, it is easy to see that, given $\lambda \in \R \setminus \{0\}$, $\i \lambda \in \sigma(\dot{\A})$ if and only if there exists $u \in \dom(A)$, $u \not = 0$, such that $P(\i \lambda)u = 0$; see for instance the proof of \cite[Lemma 4.2]{AnaLea14}.
For such $u$,
\begin{equation}
0 = \langle P(\i \lambda)u, u \rangle_{H} = \langle Au, u\rangle_H + \i \lambda \|u\|_{L^2(\Gamma_0)}^2 - \lambda^2 \|u\|^2_H,
\end{equation}
which leads to $u = 0$  on $\Gamma_0$ after taking the imaginary part ($A$ is self-adjoint). 
On the other hand, $u$ solves the stationary boundary value problem
\begin{subequations}
    \label{eq:im-mode-sys}
    \begin{align}
    \label{eq:im-mode}
    &(-\Delta - \lambda^2)u = 0 &&\mbox{in}~\Omega, \\ \label{eq:im-mode-bc}
    &(-\Delta_\Gamma + \i \lambda - \lambda^2)u  = - \partial_\nu u&&\mbox{on}~\Gamma_0,
    \end{align}
\end{subequations}
and \cref{eq:im-mode-bc} reduces to $\partial_\nu u = 0$ on $\Gamma_0$. Therefore, $u$ solves \cref{eq:im-mode}, an elliptic equation with analytic coefficients, and has vanishing Cauchy data on $\Gamma_0$; an application of the John--Holmgren theorem on unique continuation across non-characteristic hypersurfaces (see, e.g.,\cite[Uniqueness Theorem, Section 5]{Joh82book}) finally yields $u = 0$, which 
proves the desired spectral condition.
Now we must turn the operator convergence property \cref{eq:s-u-stab} into energy decay \cref{eq:s-u-energy}. For all $(u_0, u_1) \in \dom(A) \times V = \dom(\A)$, $(1 - \Pi)(u_0, u_1) \in \dom(\dot{\A})$, i.e., $\dot{\A}(1 - \Pi)(u_0, u_1) \in \dot{\H}$: denoting by $u$ the solution to \cref{eq:IBVP} originating from $(u_0, u_1)$, with \cref{eq:energy-quotient-sg} we have
\begin{equation}
E(u, t) = \frac{1}{2} \|e^{-t\dot{\A}} \dot{\A}^{-1} \dot{\A}(1 - \Pi)(u_0, u_1)\|^2_{\dot{\H}} \leq \frac{1}{2} \|e^{-t \dot{\A}} \dot{\A}^{-1}\|_{\L(\dot{\H})}^2 \|\dot{\A}(1 - \Pi)(u_0, u_1)\|^2_{\dot{\H}}, \quad t \geq 0.
\end{equation}
By \cref{lem:sandwich-hot},
\begin{equation}
E(u, t) \leq K \|e^{-t\dot{\A}}\dot{\A}^{-1}\|^2_{\L(\dot{\H})} E_1(u, 0), \quad t \geq 0,
\end{equation}
so that letting
\begin{equation}
r(t) \triangleq K \|e^{-t\dot{\A}}\dot{\A}^{-1}\|^2_{\L(\dot{\H})} \to 0, \quad t \to + \infty,
\end{equation}
gives \cref{eq:s-u-energy}. In particular, $E(u, t) \to 0$ for a set of initial data that is dense in $\H$, and
thus \cref{eq:as-stab-energy} follows from a standard density argument along with \cref{eq:energy-quotient-sg}.
\end{proof}

\begin{rem}[\emph{A priori} logarithmic decay rate]
As mentioned in the introduction, the results, and analysis of \cite{Buffe17} suggest that, regardless of geometry and dynamical hypotheses, the rate $r$ in \cref{prop:s-u-stab} can be taken as
\begin{equation}
\label{eq:buffe-log}
r(t) = \frac{K}{\log(2 +t)^2}, \quad t \geq 0,
\end{equation}
for some positive constant $K$.  In fact, if $\Gamma_1 = \emptyset$ then \cref{eq:buffe-log} is precisely given by \cite[Theorem 1.2]{Buffe17}.
\end{rem}

\begin{rem}
From \cref{prop:s-u-stab} and its proof we see that $P(\i \lambda)^{-1}$ is well-defined in $\L(H)$ for all $\lambda \in \R \setminus \{0\}$.
\end{rem}

\section{Energy decay rate and analysis of quasimodes}

\subsection{Statement of the main result}

We now state the full version of \cref{th:intro}.

\begin{theo}[Polynomial stability]\label{th:poly-stab} Suppose that \cref{as:dyn} holds. Then,
\begin{equation}
\label{eq:poly-stab}
\|e^{-t\dot{\A}} \dot{\A}^{-1}\|_{\L(\dot{\H})} = O(t^{-1/2}), \quad t \to + \infty.
\end{equation}
Thus, there exists $K > 0$ such that, for all (classical) solutions $u$ to \cref{eq:IBVP} with initial data $(u_0, u_1) \in \dom(A) \times V$,
\begin{equation}
\label{eq:poly-energy}
E(u, t) \leq K (1 + t)^{-1} E_1(u, 0), \quad t \geq 0.
\end{equation}
Furthermore, for \emph{fixed} data $(u_0, u_1) \in \dom(A) \times V$,
\begin{equation}
\label{eq:o-energy}
E(u, t) = o(t^{-1}), \quad t \to +\infty.
\end{equation}
\end{theo}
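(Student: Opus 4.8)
The strategy is the standard one for polynomial (non-uniform) stability of semigroups, namely to reduce everything to a resolvent estimate on the imaginary axis and then quote the Borichev--Tomilov theorem. Concretely, the plan is to show that there exist $C, \lambda_0 > 0$ such that
\begin{equation}
\label{eq:target-resolvent}
\|(\i \lambda - \dot{\A})^{-1}\|_{\L(\dot{\H})} \leq C \lambda^{2}, \qquad |\lambda| \geq \lambda_0.
\end{equation}
By \cref{prop:s-u-stab} we already know $\sigma(\dot{\A}) \cap \i \R = \emptyset$, so the resolvent is defined on all of $\i \R$ and is continuous there; hence \cref{eq:target-resolvent} on $|\lambda|\geq \lambda_0$ together with compactness on $|\lambda| \leq \lambda_0$ gives a uniform bound $\|(\i\lambda - \dot\A)^{-1}\| \lesssim 1 + \lambda^2$ on the whole imaginary axis. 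The Borichev--Tomilov characterization of polynomial decay (for bounded $C_0$-semigroups with $\i\R \subset \rho(\dot\A)$) then yields $\|e^{-t\dot\A}\dot\A^{-1}\|_{\L(\dot\H)} = O(t^{-1/2})$, which is \cref{eq:poly-stab}; combining this with \cref{lem:sandwich-hot} and \cref{eq:energy-quotient-sg} exactly as in the proof of \cref{prop:s-u-stab} gives the energy estimate \cref{eq:poly-energy}. Finally, for the improved conclusion $E(u,t) = o(t^{-1})$ for fixed data, one invokes the refinement of the Borichev--Tomilov theorem (see e.g.\ \cite{BatChi16}, or the argument via the fact that $\dot\A^{-1}$ is compact): when the generator has compact resolvent and the resolvent bound $O(\lambda^2)$ holds, the decay of $\|e^{-t\dot\A}\dot\A^{-1}x\|$ is actually $o(t^{-1/2})$ for every fixed $x \in \dot\H$, which transfers through \cref{lem:sandwich-hot} to $E(u,t) = o(t^{-1})$ along the dense set $\dom(A)\times V$; since $E_1$ controls the relevant norm this is exactly the statement for classical solutions.

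The heart of the matter — and the step I expect to be the main obstacle — is proving \cref{eq:target-resolvent}. Since $\dot\A$ is block-structured, a standard computation (as in \cite[Lemma 4.2]{AnaLea14}) reduces the resolvent bound to a bound on the operator pencil: \cref{eq:target-resolvent} is equivalent to
\begin{equation}
\|P(\i\lambda)^{-1}\|_{\L(H)} \leq C, \qquad \|P(\i\lambda)^{-1}\|_{\L(H, V)} \leq C|\lambda|, \qquad |\lambda| \geq \lambda_0,
\end{equation}
or equivalently to the contradiction/quasimode formulation: there is no sequence $\lambda_n \to \infty$ in $\R$ and $u_n \in \dom(A)$ with $\|u_n\|_V \sim |\lambda_n|$ (equivalently with suitably normalized energy) such that $P(\i\lambda_n) u_n = f_n$ with $\|f_n\|_H = o(|\lambda_n|^{-1})\cdot(\text{normalization})$, i.e.\ $\|f_n\|_H = o(1)$ after rescaling. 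Writing $h = 1/\lambda$ this becomes a semiclassical problem: $-h^2\Delta u = u + o(1)$ in $\Omega$, with the boundary relation $-h^2\Delta_\Gamma u + \i h u - u = -h^2\partial_\nu u + o(1)$ on $\Gamma_0$ and $u = 0$ on $\Gamma_1$, and one must derive a contradiction with the normalization. This is where the microlocal machinery of \cref{sec:microlocal} enters: one needs precise normal and tangential trace estimates to understand how energy sits near $\Gamma_0$, then a decoupling argument (\cref{sec:decoupling}) to separate the interior wave dynamics from the boundary dynamics so that the geometric control condition — via Bardos--Lebeau--Rauch boundary observability for the Dirichlet wave — can be applied as a black box, using the damping term $\i h u$ on $\Gamma_0$ (whose contribution to the imaginary part of $\langle P(\i\lambda)u,u\rangle$ controls $\|u\|_{L^2(\Gamma_0)}$) to kill the quasimode. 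The finite-order contact hypothesis in \cref{as:dyn} is what makes the generalized bicharacteristic flow well-behaved enough for propagation/observability to hold.

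I would organize \cref{sec:microlocal}--\cref{sec:decoupling} so that they culminate in precisely the pencil estimate above, then devote a short final subsection to: (i) the block computation relating $\|(\i\lambda-\dot\A)^{-1}\|$ to $\|P(\i\lambda)^{-1}\|$, (ii) the citation of Borichev--Tomilov for \cref{eq:poly-stab} and its $o$-refinement for \cref{eq:o-energy}, and (iii) the translation to energies via \cref{lem:sandwich-hot,eq:energy-quotient-sg}. Steps (i)–(iii) are routine; the genuinely new work — and the only place real analysis happens — is the semiclassical quasimode analysis establishing the $O(\lambda^2)$ resolvent growth, and within that, the main difficulty is handling the second-order \emph{tangential} derivative $\Delta_\Gamma$ together with the second-order \emph{time} derivative on the boundary, which is why a direct propagation analysis for waves with Wentzell boundary conditions is avoided in favour of the decoupling trick.
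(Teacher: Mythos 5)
Your overall strategy is the same as the paper's: reduce \cref{eq:poly-stab} to a frequency-domain estimate via the second-order structure \cref{eq:IBVP-abstract} and Borichev--Tomilov, then establish the frequency-domain estimate by ruling out semiclassical quasimodes using microlocal trace estimates, a decoupling argument, and the Bardos--Lebeau--Rauch observability inequality through a Hautus test. The $o$-improvement for fixed data is indeed automatic from \cite[Theorem 2.4]{BorTom10}; you do not need to invoke compactness of the resolvent.

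However, your stated target pencil estimate is wrong, and wrong in a way that matters. You claim that \cref{eq:target-resolvent}, i.e.\ $\|(\i\lambda - \dot\A)^{-1}\|_{\L(\dot\H)} = O(\lambda^2)$, is equivalent to $\|P(\i\lambda)^{-1}\|_{\L(H)} \leq C$ (a \emph{uniform} bound). This is off by one power of $|\lambda|$. The correct reduction (which is exactly \cref{prop:res} and \cite[Proposition 2.4]{AnaLea14}) is
\begin{equation*}
\|(\i\lambda - \dot\A)^{-1}\|_{\L(\dot\H)} = O(|\lambda|^2)
\ \Longleftrightarrow\
\|P(\i\lambda)^{-1}\|_{\L(H)} = O(|\lambda|), \quad \lambda \to \pm\infty.
\end{equation*}
Your version $\|P(\i\lambda)^{-1}\|_{\L(H)} = O(1)$ would instead give $\|(\i\lambda - \dot\A)^{-1}\| = O(|\lambda|)$ and hence $\|e^{-t\dot\A}\dot\A^{-1}\| = O(t^{-1})$, i.e.\ $E(u,t) = O(t^{-2})E_1(u,0)$ — which is strictly better than the theorem you are trying to prove, and is in fact impossible: \cref{th:intro-sharp} produces smooth data in the disk with $\sup_{t} t^{1+\eps}E(u,t) = +\infty$ for every $\eps>0$, so $\|P(\i\lambda)^{-1}\|_{\L(H)}$ genuinely grows like $|\lambda|$ along the focusing modes of \cref{sec:focusing} and is not bounded. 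The same mismatch propagates into your quasimode reduction, where the right-hand errors need to be nailed down as $o(h^3)$ (not just ``$o(1)$ after rescaling''): writing $h = 1/\lambda$, the contradiction hypothesis $\|P(\i\lambda_n)^{-1}\| > n|\lambda_n|$ leads, after normalization and multiplication by $h_n^2$, to $\|(h_n^2 A + \i h_n BB^* - 1)u_n\|_H = o(h_n^3)$, and it is precisely these $o(h^3)$-quasimodes that \cref{sec:microlocal} and \cref{sec:decoupling} rule out.
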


As it is now standard, the asymptotic theory of strongly continuous semigroups on Hilbert spaces \cite{BorTom10} and the special structure \cref{eq:IBVP-abstract} of \cref{eq:IBVP} \cite{AnaLea14} will allow us to deduce \cref{th:poly-stab} from the following resolvent estimate.
\begin{prop}[Second-order resolvent estimate]\label{prop:res} Under \cref{as:dyn},
\begin{equation}
\label{eq:second-order-res-estimate}
\|P(\i \lambda)^{-1}\|_{\L(H)} = O(|\lambda|), \quad \lambda \to \pm \infty.
\end{equation}
\end{prop}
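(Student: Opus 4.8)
The plan is to argue by contradiction, reducing \cref{prop:res} to a quasimode statement, and then to \emph{decouple} the hyperbolic boundary condition into a homogeneous Dirichlet condition so that the Bardos--Lebeau--Rauch boundary observability estimate can be invoked off the shelf (this is the decoupling of \cref{sec:decoupling}). Since $A$ and $BB^\ast$ are real operators, $\overline{P(\i\lambda)u}=P(-\i\lambda)\overline u$, so $\|P(-\i\lambda)^{-1}\|_{\L(H)}=\|P(\i\lambda)^{-1}\|_{\L(H)}$ and it suffices to treat $\lambda\to+\infty$. If \cref{eq:second-order-res-estimate} fails there are $\lambda_n\to+\infty$ and, using that $P(\i\lambda_n)^{-1}\in\L(H)$ is valued in $\dom(A)$ (as recorded after \cref{prop:s-u-stab}), functions $u_n\in\dom(A)$ with $\|u_n\|_H=1$ such that $f_n\triangleq P(\i\lambda_n)u_n$ satisfies $\lambda_n\|f_n\|_H\to0$. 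Writing $f_n=(f_n^\Omega,f_n^{\Gamma_0})$ and using \cref{prop:carac-dom-A}, $u_n\in H^2(\Omega)$ solves $(-\Delta-\lambda_n^2)u_n=f_n^\Omega$ in $\Omega$, $u_n=0$ on $\Gamma_1$, and $-\Delta_\Gamma u_n+\partial_\nu u_n+(\i\lambda_n-\lambda_n^2)u_n=f_n^{\Gamma_0}$ on $\Gamma_0$. Pairing the equation with $u_n$ in $H$ and taking imaginary parts gives $\lambda_n\|u_n|_{\Gamma_0}\|_{L^2(\Gamma_0)}^2=\im\langle f_n,u_n\rangle_H$, hence $\|u_n|_{\Gamma_0}\|_{L^2(\Gamma_0)}=o(\lambda_n^{-1})$; combined with $\|u_n\|_{L^2(\Omega)}^2+\|u_n|_{\Gamma_0}\|_{L^2(\Gamma_0)}^2=1$ this forces $\|u_n\|_{L^2(\Omega)}\to1$. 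The smallness of the Dirichlet trace on $\Gamma_0$ at the scale $\lambda_n^{-1}$ is the one a priori bound that will really matter.

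Next I decouple. Let $z_n$ be the harmonic extension to $\Omega$ of $u_n|_\Gamma$ (so $z_n=0$ on $\Gamma_1$ and $z_n|_{\Gamma_0}=u_n|_{\Gamma_0}$), and set $w_n\triangleq u_n-z_n\in H^2(\Omega)\cap H^1_0(\Omega)$. Then $(-\Delta-\lambda_n^2)w_n=f_n^\Omega+\lambda_n^2 z_n=:r_n$ and, since the harmonic extension is bounded from $H^{-1/2}(\Gamma)$ into $L^2(\Omega)$, $\|z_n\|_{L^2(\Omega)}\lesssim\|u_n|_\Gamma\|_{L^2(\Gamma)}=o(\lambda_n^{-1})$; hence $\lambda_n^{-1}\|r_n\|_{L^2(\Omega)}\to0$ while $\|w_n\|_{L^2(\Omega)}\to1$. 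Everything then reduces to proving $\lambda_n^{-1}\|\partial_\nu w_n\|_{L^2(\Gamma_0)}\to0$.

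This is where the normal and tangential trace estimates of \cref{sec:microlocal} come in. Split $\partial_\nu w_n|_{\Gamma_0}=\Psi_n\partial_\nu w_n+(1-\Psi_n)\partial_\nu w_n$ with the spectral cutoff $\Psi_n\triangleq\psi(-\lambda_n^{-2}\Delta_\Gamma)$, where $\psi$ is smooth, $\psi\equiv1$ on $[0,1+\delta]$ for a fixed small $\delta>0$, and $\supp\psi\subset[0,4)$; thus $\Psi_n$ localizes to tangential frequencies $|\xi_\tau|_g\leq2\lambda_n$ and $1-\Psi_n$ to the elliptic region $|\xi_\tau|_g^2\geq(1+\delta)\lambda_n^2$. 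The operators $\Psi_n\Delta_\Gamma$, $\lambda_n^2\Psi_n$, $\lambda_n\Psi_n$ and $\Psi_n\Lambda$ — $\Lambda$ being the order-one pseudodifferential restriction to $\Gamma_0$ of the harmonic-extension Dirichlet--Neumann map — all have $\L(L^2(\Gamma_0))$-norm $O(\lambda_n^2)$, so expressing $\partial_\nu w_n|_{\Gamma_0}$ through the boundary condition and through $\partial_\nu z_n|_{\Gamma_0}=\Lambda(u_n|_{\Gamma_0})$ (modulo an $O(\lambda_n^{-\infty})$ term from $\Gamma_1$) yields $\|\Psi_n\partial_\nu w_n\|_{L^2(\Gamma_0)}\lesssim\lambda_n^2\|u_n|_{\Gamma_0}\|_{L^2(\Gamma_0)}+\|f_n^{\Gamma_0}\|_{L^2(\Gamma_0)}=o(\lambda_n)$, using only $\|u_n|_{\Gamma_0}\|_{L^2(\Gamma_0)}=o(\lambda_n^{-1})$. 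In the elliptic region I instead build a tangential-microlocal parametrix for the interior equation near $\Gamma_0$: because $\overline{\Gamma_0}\cap\overline{\Gamma_1}=\emptyset$ the influence of $\Gamma_1$ on $\Gamma_0$ is $O(\lambda_n^{-\infty})$, and as $\|f_n^\Omega\|_{L^2}$ is negligible, $\partial_\nu u_n$ and $\partial_\nu z_n$ coincide on $\Gamma_0$, modulo $o(\lambda_n^{-1})$, with the action on $u_n|_{\Gamma_0}$ of the elliptic Dirichlet--Neumann operators of $-\Delta-\lambda_n^2$ and of $-\Delta$, of principal symbols $\sqrt{|\xi_\tau|_g^2-\lambda_n^2}$ and $|\xi_\tau|_g$ respectively; their difference, restricted to $\ran(1-\Psi_n)$, has norm $O(\lambda_n^2/|\xi_\tau|_g)=O(\lambda_n)$, whence $\|(1-\Psi_n)\partial_\nu w_n\|_{L^2(\Gamma_0)}\lesssim\lambda_n\|u_n|_{\Gamma_0}\|_{L^2(\Gamma_0)}+o(\lambda_n^{-1})=o(1)$. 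Altogether $\lambda_n^{-1}\|\partial_\nu w_n\|_{L^2(\Gamma_0)}\to0$.

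To finish: under \cref{as:dyn} the finite-order contact condition makes the propagation machinery behind \cite{BarLeb92} applicable and $\Gamma_0$ satisfies the geometric control condition, so the Bardos--Lebeau--Rauch boundary observability estimate, in its stationary (resolvent) form, provides $C>0$ with $\|v\|_{L^2(\Omega)}\leq C\lambda^{-1}\bigl(\|(-\Delta-\lambda^2)v\|_{L^2(\Omega)}+\|\partial_\nu v\|_{L^2(\Gamma_0)}\bigr)$ for all large $\lambda$ and all $v\in H^2(\Omega)\cap H^1_0(\Omega)$; applied to $v=w_n$, $\lambda=\lambda_n$, the previous two paragraphs force $\|w_n\|_{L^2(\Omega)}\to0$, contradicting $\|w_n\|_{L^2(\Omega)}\to1$. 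The main obstacle is the elliptic-region half of the third paragraph: a priori $\partial_\nu u_n$ on $\Gamma_0$ may be as large as $\|u_n|_{\Gamma_0}\|_{H^2(\Gamma_0)}=O(\lambda_n^2)$ through the $\Delta_\Gamma u_n$ term, and one must show that the harmonic correction $z_n$ trims this down to $o(\lambda_n)$; this requires the careful trace analysis of \cref{sec:microlocal} together with the precise matching of the two elliptic Dirichlet--Neumann operators, and it is exactly the transition regime $|\xi_\tau|_g\approx\lambda_n$ — controlled only to $o(\lambda_n)$, no better — that both drives the argument and underlies the sharpness statement \cref{th:intro-sharp}.
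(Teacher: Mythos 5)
Your overall architecture matches the paper's: argue by contradiction to produce a sequence of quasimodes, use the dissipative pairing to extract the crucial a priori bound $\|u_n|_{\Gamma_0}\|_{L^2}=o(\lambda_n^{-1})$, split $u_n$ by subtracting a Dirichlet extension of its trace so that what remains lives in $H^1_0$, and invoke the Bardos--Lebeau--Rauch observability estimate (Hautus form, \cref{lem:hautus}) to drive the contradiction. The two routes diverge in the mechanics. First, the paper does not use the harmonic extension $\Delta z_n=0$; it extends via the \emph{modified Helmholtz} operator $(-h^2\Delta+2\i h-1)w=0$ (\cref{lem:dirichlet-pb,prop:diri-neu}), which is rigged to give well-posedness directly through the resolvent of $-\Delta_D$ and to make $(-h^2\Delta-1)u_1=-2\i h u_1$ manifestly small; you get the analogous smallness from $\|z_n\|_{L^2(\Omega)}=o(\lambda_n^{-1})$ via $H^{-1/2}$-continuity of the harmonic extension, which is equally valid. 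Second, and more substantially, your microlocal core differs: you split the tangential frequencies with the spectral cutoff $\psi(-\lambda_n^{-2}\Delta_\Gamma)$ and treat the elliptic zone $|\xi_\tau|\gtrsim(1+\delta)^{1/2}\lambda_n$ by comparing the two Dirichlet--to--Neumann symbols $\sqrt{|\xi_\tau|_g^2-\lambda_n^2}$ and $|\xi_\tau|_g$ via an interior elliptic parametrix, whereas the paper (\cref{prop:HF,prop:LF}) works with $h$-pseudodifferential cutoffs in normal geodesic local charts and proves the elliptic-zone estimate with a tangential G\aa rding inequality plus a Rellich-type multiplier on the half-space, avoiding any explicit DN parametrix. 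An attractive feature of your version is that the elliptic-zone estimate uses only the $L^2$ bound on $u_n|_{\Gamma_0}$, while the paper's \cref{prop:HF} additionally needs the $H^1_h(\Gamma_0)$ trace control of \cref{lem:quasim-H1-gamma}; the trade-off is that your parametrix claims (the $o(\lambda_n^{-1})$ source contribution to $\partial_\nu u_n$, the $\L(L^2)$ operator bounds for the DN symbol difference on $\operatorname{ran}(1-\Psi_n)$, and the commutator errors near the transition shell $|\xi_\tau|\approx\lambda_n$) are asserted rather than proved, and filling them in would require roughly the same amount of microlocal bookkeeping the paper carries out in \cref{sec:microlocal}. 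You correctly identify the transition regime $|\xi_\tau|_g\approx\lambda_n$ as the bottleneck and the source of non-uniformity, which is the right diagnosis and matches the discussion underlying \cref{th:intro-sharp}.
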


\begin{proof}[Proof of \cref{th:poly-stab} assuming \cref{prop:res}] To deduce \cref{eq:poly-stab} from \cref{eq:second-order-res-estimate}, use, e.g., \cite[Proposition 2.4]{AnaLea14}. Passing from \cref{eq:poly-stab} to \cref{eq:poly-energy} is done exactly as in the proof of \cref{prop:s-u-stab}. Finally, the $o$-improvement \cref{eq:o-energy}  for individual orbits is contained in \cite[Theorem 2.4]{BorTom10}, which we apply to the contraction semigroup generated by $- \dot{\A}$.
\end{proof}

\Cref{prop:res} is in fact the main achievement of this paper, and is proved in \cref{sec:quasimodes} below.


\subsection{Analysis of quasimodes}

\label{sec:quasimodes}

After \emph{reductio ad absurdum}, establishing energy decay rates for \cref{eq:IBVP} through high-frequency
asymptotics for $P(\i \lambda)^{-1}$ amounts to analyzing the limit behavior of so-called \emph{quasimodes} at certain semiclassical scales.
Here we give a definition of quasimodes for \cref{eq:IBVP} inspired by that of \cite{AnaLea14}.
\begin{defi}[Quasimodes]\label{def:qm} Let $\delta \geq 0$.
A sequence $\{(h_n, u_n)\}_{n \in \N}$ of positive reals $h_n$ with $h_n \to 0$, $n \to + \infty$, and elements $u_n \in \dom(A)$ such that
\begin{subequations}
\begin{align}
&\|u_n\|_H = 1, &&n \in \N, \\
&\|(h_n^2 A + \i h_n BB^\ast -1)u_n\|_H = o(h_n^{1 + \delta}), &&n \to + \infty,
\end{align}
\end{subequations}
is called a sequence of (normalized) $o(h_n^{1+\delta})$-\emph{quasimodes}.
\end{defi}


In the sequel, subscripts indicating the dependence on $n$ and $h_n$ are omitted: with a slight abuse in notation we will speak of $o(h^{1+\delta})$-quasimodes $u$ and consider the limit $h \to 0$. Furthermore, $\delta$ will always denote a nonnegative parameter and, when considering quasimodes $u$, we will write
\begin{equation}
(f, e) \triangleq (h^2 A + \i h BB^\ast - 1)u \in H.
\end{equation}
With that in mind, we see that
$o(h^{1+\delta})$-quasimodes $u$ solve the stationary boundary value problem
\begin{subequations}
\label{eq:quasim-PDE}
\begin{align}
&(-h^2\Delta - 1)u = f&&\mbox{in}~\Omega, \\
\label{eq:quasim-gamma}
&(-h^2 \Delta_\Gamma + \i h - 1)u = - h^2 \partial_\nu u + e &&\mbox{on}~\Gamma_0, \\
&u = 0&&\mbox{on}~\Gamma_1,
\end{align}
\end{subequations}
with
\begin{equation}
\label{eq:base-qm}
\|u\|^2_{L^2(\Omega)} + \|u\|_{L^2(\Gamma_0)}^2 = 1, \quad 
\|f\|_{L^2(\Omega)} = o(h^{1+\delta}), \quad \|e\|_{L^2(\Gamma_0)} = o(h^{1+\delta}), \quad h \to 0.
\end{equation}
(Compare \cref{eq:quasim-PDE} with \cref{eq:im-mode-sys}, which we studied to rule out purely imaginary modes for \cref{eq:IBVP}.)
\begin{rem}
Since we aim at establishing a non-uniform energy decay rate $o(t^{-1})$, it suffices to study $o(h^3)$-quasimodes, which ultimately we prove \emph{do not} exist under \cref{as:dyn}.
When possible, we nevertheless consider more general $o(h^{1+\delta})$-quasimodes in an attempt to better highlight the obstruction to uniform stabilization  and the role of the geometric control condition. In this regard, we point out that uniform (exponential) energy decay would correspond to the case $\delta = 0$.
\end{rem}




\subsubsection{Preliminary estimates}

Our first estimates are  immediate consequences of the abstract dissipative second-order structure \cref{eq:IBVP-abstract}.
\begin{lemma}
\label{lem:V-dissip-h}
For $o(h^{1+\delta})$-quasimodes $u$,
\begin{equation}
\label{eq:V-dissip-h}
 h \|u\|_V = O(1), \quad  \|u\|_{L^2(\Gamma_0)} = o(h^{\delta/2}), \quad h \to 0.
\end{equation}
\end{lemma}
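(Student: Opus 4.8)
The plan is to test the quasimode equation against $u$ itself, exploiting the dissipative structure exactly as in the proof of \cref{prop:s-u-stab} but now keeping track of the semiclassical parameter $h$. Precisely, I would compute
\begin{equation}
\langle (h^2 A + \i h BB^\ast - 1)u, u\rangle_H = h^2 \langle Au, u\rangle_H + \i h \|B^\ast u\|^2_{L^2(\Gamma_0)} - \|u\|^2_H = \langle (f,e), u\rangle_H,
\end{equation}
recalling that $\langle Au, u\rangle_H = \|u\|_V^2$ (with the normalization of $V$, or up to the kernel when $\Gamma_1 = \emptyset$), that $B^\ast u = u|_{\Gamma_0}$, and that $\|u\|_H = 1$.

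Taking the imaginary part kills the two real terms $h^2\|u\|_V^2$ and $\|u\|_H^2$ and leaves
\begin{equation}
h \|u\|^2_{L^2(\Gamma_0)} = \im \langle (f,e), u\rangle_H \leq \|(f,e)\|_H \|u\|_H = \|(f,e)\|_H = o(h^{1+\delta}),
\end{equation}
by \cref{eq:base-qm} and Cauchy--Schwarz; dividing by $h$ gives $\|u\|^2_{L^2(\Gamma_0)} = o(h^{\delta})$, i.e.\ the second estimate in \cref{eq:V-dissip-h}. For the first estimate, take the real part: $h^2 \|u\|^2_V = \|u\|^2_H + \re\langle (f,e),u\rangle_H \leq 1 + \|(f,e)\|_H = 1 + o(h^{1+\delta}) = O(1)$, so $h\|u\|_V = O(1)$ after taking square roots. (When $\Gamma_1 = \emptyset$ one works with the quotient norm / the representative orthogonal to constants, exactly as in \cref{lem:poinc}; this changes nothing since $B^\ast$ annihilates constants and the constant part of $u$ contributes only an $O(h^2)$ error that is absorbed.)

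There is essentially no obstacle here: this is a soft energy identity and the only mild point to be careful about is the case $\Gamma_1=\emptyset$, where $\|\cdot\|_V$ is a seminorm and one should either pass to $\dot{\H}$ or note that the estimate $h\|u\|_V = O(1)$ is a statement about the seminorm and hence unaffected. Everything else is Cauchy--Schwarz and splitting into real and imaginary parts.
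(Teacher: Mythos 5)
Your argument is correct and is essentially the paper's own proof: pair the quasimode equation with $u$ in $H$, then split into real and imaginary parts to extract the two estimates. The only very minor inaccuracy is the aside about $\Gamma_1=\emptyset$: one does not need to pass to a quotient, since as the paper notes $\langle A\cdot,\cdot\rangle_{V^\ast,V}+\|\cdot\|_H^2$ is equivalent to $\|\cdot\|_V^2$ in either case, and $\|u\|_H=1$ closes the gap.
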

\begin{proof}
Since $(h^2 A + \i h BB^\ast - 1)u = (f, e)$, taking the scalar product with $u$ in $H$ yields
\begin{equation}
\label{eq:pairing-id}
h^2 \langle Au, u\rangle_{V^\ast,V} + \i h \|u\|^2_{L^2(\Gamma_0)} - \|u\|^2_H = \langle (f,e), u\rangle_H.
\end{equation}
Using the real part of \cref{eq:pairing-id} and the Cauchy--Schwarz inequality leads to
\begin{equation}
\label{eq:pairing-id-real}
h^2\langle Au, u\rangle_{V^\ast,V} = 1 + \re \langle (f,e), u\rangle_H \leq 1 + \|(f, e)\|_H = O(1), \quad h \to 0.
\end{equation}
Since $\langle A\cdot, \cdot\rangle_{V^\ast, V} + \|\cdot\|_H$ is equivalent to the squared $V$-norm, \cref{eq:pairing-id-real} implies the first part of \cref{eq:V-dissip-h}. On the other hand, by taking the imaginary part of \cref{eq:pairing-id} we see that
\begin{equation}
h \|u\|^2_{L^2(\Gamma_0)} = \im \langle (f,e), u\rangle_H \leq \|(f, e)\|_H = o(h^{1+\delta}), \quad h \to 0,
\end{equation}
which completes the proof of \cref{eq:V-dissip-h}.
\end{proof}

Next, we give a differential multiplier identity that will be used at different points of our analysis; see, e.g., \cite[Lemma 3.7]{Spe14} for a proof.
\begin{lemma}[Rellich identity]
\label{lem:rellich}
For all $h > 0$, for all $u \in H^1(\Omega)$ with $\Delta u \in L^2(\Omega)$ $\partial_\nu u \in L^2(\Gamma)$ and $u \in H^1(\Gamma)$, and for all real-valued vector fields $q = (q_1, \dots, q_d) \in \C^1(\overline{\Omega})^d$,
\begin{multline}
\label{eq:rellich}
\int_\Omega  2 \re \mleft ( \overline{q \cdot \nabla u} (h^2 \Delta u +  u) \mright )  - (\dive q) (h^2|\nabla u|^2 - |u|^2) + 2 h^2 \re \sum_{i=1}^d \sum_{j=1}^d \dl{q_j}{x_i} \dl{u}{x_i} \overline{\dl{u}{x_j}} \, \d x \\ =  \int_{\Gamma} 2 h^2  \re \left( \overline{q \cdot \nabla u} \dl{u}{\nu} \right) + (|u|^2 - h^2 |\nabla u|^2)(q \cdot \nu) \, \d \sigma.
\end{multline}
\end{lemma}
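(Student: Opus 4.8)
The plan is to establish \cref{eq:rellich} first for smooth $u$, by recognizing the left-hand integrand as an exact divergence, and then to relax the regularity of $u$ by a density argument.

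First I would take $u \in C^\infty(\overline{\Omega})$ and set $v \triangleq q \cdot \nabla u$, so that $\overline v = q \cdot \nabla \overline u$ since $q$ is real-valued. The core is the pointwise identity
\[
2\re\bigl(\overline v\,(h^2 \Delta u + u)\bigr) - (\dive q)(h^2|\nabla u|^2 - |u|^2) + 2h^2 \re \sum_{i=1}^d\sum_{j=1}^d \dl{q_j}{x_i}\dl{u}{x_i}\overline{\dl{u}{x_j}} = \dive W,
\]
where $W \triangleq 2h^2 \re(\overline v\, \nabla u) - h^2 |\nabla u|^2\, q + |u|^2\, q$. To obtain it, I would start from $2\re(\overline v \Delta u) = \dive\bigl(2\re(\overline v \nabla u)\bigr) - 2\re(\nabla \overline v \cdot \nabla u)$, expand $\partial_i \overline v = \sum_j (\partial_i q_j)\,\partial_j \overline u + \sum_j q_j\, \partial_i\partial_j \overline u$, and use the elementary identities $2\re \sum_{i,j} q_j (\partial_i\partial_j \overline u)(\partial_i u) = q \cdot \nabla(|\nabla u|^2)$ and $2\re(\overline v u) = q \cdot \nabla(|u|^2)$; rewriting $q \cdot \nabla(|\nabla u|^2) = \dive(q|\nabla u|^2) - (\dive q)|\nabla u|^2$, and similarly for $|u|^2$, then collects all terms into the displayed form. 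Integrating over $\Omega$ and applying the divergence theorem gives $\int_\Omega \dive W \, \d x = \int_\Gamma W \cdot \nu \, \d \sigma$; since $\nabla u \cdot \nu = \partial_\nu u$ one has $W \cdot \nu = 2h^2 \re\bigl(\overline{q \cdot \nabla u}\,\partial_\nu u\bigr) + (|u|^2 - h^2|\nabla u|^2)(q \cdot \nu)$, which is precisely the integrand on the right of \cref{eq:rellich}.

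Next I would relax the regularity of $u$ by density. For $u$ in the class of the statement, the decomposition $\nabla u|_\Gamma = \nabla_\Gamma(u|_\Gamma) + (\partial_\nu u)\,\nu$ recalled in \cref{sec:laplace-beltrami} shows $\nabla u|_\Gamma \in L^2(\Gamma)^d$, hence all the integrands in \cref{eq:rellich} lie in $L^1(\Omega)$ or $L^1(\Gamma)$, and each integral depends continuously on $u$ for the mode of convergence in which $u_n \to u$ in $H^1(\Omega)$, $\Delta u_n \to \Delta u$ in $L^2(\Omega)$, $\partial_\nu u_n \to \partial_\nu u$ in $L^2(\Gamma)$ and $u_n|_\Gamma \to u|_\Gamma$ in $H^1(\Gamma)$. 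It then suffices to approximate $u$ by $u_n \in C^\infty(\overline{\Omega})$ in all four senses simultaneously, which can be done by localizing with the boundary charts of \cref{sec:normal-geodesic}, flattening the boundary, and combining interior mollification with tangential mollification and an inward translation in the half-space model. For the uses of \cref{lem:rellich} made later it is in fact enough to treat $u \in \dom(A)$, where $u \in H^2(\Omega)$; then density of $C^\infty(\overline{\Omega})$ in $H^2(\Omega)$, together with the trace continuities $H^2(\Omega) \to H^{3/2}(\Gamma) \hookrightarrow H^1(\Gamma)$ and $H^2(\Omega) \to H^{1/2}(\Gamma) \hookrightarrow L^2(\Gamma)$, automatically provides the boundary convergences.

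The step I expect to be the main obstacle is precisely this simultaneous approximation: the coarse norm $\|u\|_{H^1(\Omega)} + \|\Delta u\|_{L^2(\Omega)}$ controls $\partial_\nu u$ only in $H^{-1/2}(\Gamma)$, so one genuinely needs to approximate in a finer topology retaining the information $\partial_\nu u \in L^2(\Gamma)$ and $u|_\Gamma \in H^1(\Gamma)$; this is a standard but somewhat technical density lemma, and it is the content of the reference \cite{Spe14} cited in the statement. By contrast, the pointwise computation of the first step is routine Rellich bookkeeping.
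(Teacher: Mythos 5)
Your proof is correct and reconstructs the content of the cited reference: the paper's own proof of this lemma is simply the citation to \cite[Lemma~3.7]{Spe14}, and your pointwise divergence identity followed by a density argument is exactly what that reference establishes. One minor aside is slightly off: you claim that $u \in \dom(A)$ suffices for the later uses of the lemma, but in the proof of \cref{prop:diri-neu} the identity is applied to the Dirichlet-problem solution $w$ with $H^1(\Gamma)$ boundary data, which in general lies only in $H^{3/2}(\Omega)$, so $\dom(A)$ regularity is not available there (the paper itself acknowledges this and offers the alternative of first establishing the estimate for smooth functions and concluding by density at the level of the estimate, which renders the remark moot without affecting your main argument).
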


With \cref{lem:rellich} we can give an \emph{a priori} estimate of the Neumann trace of quasimodes on $\Gamma_0$. 

\begin{lemma}\label{lem:a-priori-neumann} For $o(h^{1+\delta})$-quasimodes $u$,
\begin{equation}
\label{eq:a-priori-neumann}
h \|\partial_\nu u\|_{L^2(\Gamma_0)} = O(1), \quad h \to 0.
\end{equation}
\end{lemma}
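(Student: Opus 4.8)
The natural approach is to apply the Rellich identity (Lemma \ref{lem:rellich}) with a well-chosen vector field $q \in \C^1(\overline{\Omega})^d$ and extract the Neumann trace term on the left-hand side. Since $\overline{\Gamma_0} \cap \overline{\Gamma_1} = \emptyset$, I would pick $q$ to be a smooth vector field on $\overline{\Omega}$ that equals $\nu$ (the outward unit normal) near $\Gamma_0$ and vanishes near $\Gamma_1$; such a $q$ exists by a standard partition-of-unity construction extending the normal field off a collar neighborhood of $\Gamma_0$. With this choice, $q\cdot\nu = 1$ on $\Gamma_0$ and $q\cdot\nu = 0$ on $\Gamma_1$, so the boundary integral in \cref{eq:rellich} becomes
\[
\int_{\Gamma_0} 2h^2 \re\mleft(\overline{q\cdot\nabla u}\,\partial_\nu u\mright) + |u|^2 - h^2|\nabla u|^2 \, \d\sigma.
\]
On $\Gamma_0$ we may decompose $\nabla u = \nabla_\Gamma u + (\partial_\nu u)\nu$, so $q\cdot\nabla u = \partial_\nu u$ there (using $q = \nu$), and the first term is $2h^2|\partial_\nu u|^2$; likewise $h^2|\nabla u|^2 = h^2|\nabla_\Gamma u|^2 + h^2|\partial_\nu u|^2$ on $\Gamma_0$. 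Hence the boundary integral equals $\int_{\Gamma_0} h^2|\partial_\nu u|^2 - h^2|\nabla_\Gamma u|^2 + |u|^2\,\d\sigma$, which isolates $h^2\|\partial_\nu u\|^2_{L^2(\Gamma_0)}$ up to terms we can already control.

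The remaining task is then to bound all other quantities in \cref{eq:rellich} by $O(1)$. For the left-hand volume integral: the term $2\re(\overline{q\cdot\nabla u}(h^2\Delta u + u))$ is handled by writing $h^2\Delta u + u = -f$ from \cref{eq:quasim-PDE} and using Cauchy--Schwarz together with $\|h\nabla u\|_{L^2(\Omega)} = O(1)$ (the first part of \cref{eq:V-dissip-h}), $\|u\|_{L^2(\Omega)} \leq 1$, and $\|f\|_{L^2(\Omega)} = o(h^{1+\delta})$; one must be slightly careful to produce $h\,\overline{q\cdot\nabla u}$ paired with $h\Delta u$, i.e. $h^2\int \overline{q\cdot\nabla u}\,\Delta u = \int \overline{(hq\cdot\nabla u)}(h\Delta u)$, bounded by $\|hq\cdot\nabla u\|_{L^2}\cdot\|h\Delta u\|_{L^2}$, and $h^2\|\Delta u\|_{L^2} \leq \|u\|_{L^2} + \|f\|_{L^2} = O(1)$. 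The terms involving $\dive q$ and $\partial q_j/\partial x_i$ are bounded because $q \in \C^1(\overline{\Omega})$ has bounded derivatives and $h^2|\nabla u|^2$, $|u|^2$ are $O(1)$ in $L^1(\Omega)$. On the boundary side, $\int_{\Gamma_0}|u|^2\,\d\sigma \leq \|u\|^2_{L^2(\Gamma_0)} \leq 1$, and $h^2\int_{\Gamma_0}|\nabla_\Gamma u|^2\,\d\sigma \leq \|u\|^2_V \cdot h^2 = O(1)$ again by \cref{eq:V-dissip-h}. Rearranging \cref{eq:rellich} then yields $h^2\|\partial_\nu u\|^2_{L^2(\Gamma_0)} = O(1)$, which is \cref{eq:a-priori-neumann}.

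The main subtlety — though not a deep obstacle — is ensuring the regularity hypotheses of Lemma \ref{lem:rellich} are met: one needs $u \in H^1(\Omega)$ with $\Delta u \in L^2(\Omega)$, $\partial_\nu u \in L^2(\Gamma)$, and $u|_\Gamma \in H^1(\Gamma)$. For $o(h^{1+\delta})$-quasimodes $u \in \dom(A)$, Proposition \ref{prop:carac-dom-A} gives $u \in H^2(\Omega)$ with $u|_{\Gamma_0} \in H^2(\Gamma_0)$ and $u|_{\Gamma_1} = 0$, so all these hold comfortably; the extension of $u$ by trace theory gives $\partial_\nu u \in H^{1/2}(\Gamma) \subset L^2(\Gamma)$. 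So the identity applies directly. The only place requiring attention is confirming that the vector field $q$ can be chosen globally $\C^1$ on $\overline{\Omega}$ with the prescribed behavior near the two disjoint boundary components — this is immediate from the geometric separation $\overline{\Gamma_0} \cap \overline{\Gamma_1} = \emptyset$, using a collar neighborhood of $\Gamma_0$ disjoint from $\Gamma_1$ and a smooth cutoff. Everything else is a bookkeeping exercise in Cauchy--Schwarz once \cref{eq:V-dissip-h} and \cref{eq:base-qm} are invoked.
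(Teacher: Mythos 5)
Your proposal takes essentially the same route as the paper: apply the Rellich identity with a vector field $q$ satisfying $q=\nu$ on $\Gamma_0$ and $q=0$ on $\Gamma_1$ (the paper cites \cite[Lemma~2.1]{Kom94book} for its existence, which matches your collar-neighborhood construction), decompose $|\nabla u|^2 = |\partial_\nu u|^2 + |\nabla_\Gamma u|^2$ on $\Gamma_0$, and bound all remaining terms by $O(1)$ using \cref{eq:base-qm} and \cref{eq:V-dissip-h}. The one small redundancy is your digression about pairing $h\,\overline{q\cdot\nabla u}$ with $h\Delta u$; having already written $h^2\Delta u + u = -f$, the Cauchy--Schwarz bound $\|q\cdot\nabla u\|_{L^2}\|f\|_{L^2} = O(h^{-1})\cdot o(h^{1+\delta}) = O(1)$ finishes that term directly, which is exactly how the paper phrases it.
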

\begin{proof}
Since $\Omega$ is smooth and $\overline{\Gamma_0} \cap \overline{\Gamma_1} = \emptyset$, by \cite[Lemma 2.1]{Kom94book} we may find a smooth vector field $q : \overline{\Omega} \to \R^d$ such that $q = \nu$ on $\Gamma_0$ and $q = 0$ on $\Gamma_1$. With this choice of $q$, the Rellich identity from \cref{lem:rellich} applied to $o(h^{1+\delta})$-quasimodes $u$ yields
\begin{multline}
\label{eq:rellich-trace}
 \int_\Omega - 2 \re \left ( \overline{q \cdot \nabla u} f \right )  - (\dive q) (h^2|\nabla u|^2 - |u|^2) + 2 h^2 \re \sum_{i=1}^d \sum_{j=1}^d \dl{q_j}{x_i} \dl{u}{x_i} \overline{\dl{u}{x_j}} \, \d x \\ =  \int_{\Gamma_0}  h^2   \left| \dl{u}{\nu} \right|^2 + (|u|^2 - h^2 |\nabla_\Gamma u|^2) \, \d \sigma,
\end{multline}
where we used that $|\nabla u|^2 = |\partial_\nu u|^2 + |\nabla_\Gamma u|^2$ on $\Gamma_0$. From \cref{eq:rellich-trace} it is easy to deduce that
\begin{equation}
h^2 \|\partial_\nu u\|^2_{L^2(\Gamma_0)} = O\mleft(\|u\|_{H^1(\Omega)}\|f\|_{L^2(\Omega)} + h^2 \|u\|_{H^1(\Omega)}^2 + \|u\|^2_{L^2(\Gamma_0)} + h^2 \|u\|^2_{H^1(\Gamma_0)} \mright), \quad h \to 0,
\end{equation}
which in turn leads to the desired estimate \cref{eq:a-priori-neumann} due to  \cref{eq:base-qm,eq:V-dissip-h}.
\end{proof}
We can now estimate tangential derivatives of quasimodes on $\Gamma_0$.

\begin{lemma}
\label{lem:quasim-H1-gamma} For $o(h^{1+\delta})$-quasimodes $u$,
\begin{equation}
h \|\nabla_\Gamma u\|_{L^2(\Gamma_0)^{d-1}} = o(h^{\delta/2} + h^{(2 + \delta)/4}), \quad h \to 0.
\end{equation}
\end{lemma}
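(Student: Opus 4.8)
The plan is to extract the quantity $\|\nabla_\Gamma u\|^2_{L^2(\Gamma_0)^{d-1}}$ directly from the boundary equation \cref{eq:quasim-gamma} by pairing it with the trace of $u$ on $\Gamma_0$. Since $u \in \dom(A)$, \cref{prop:carac-dom-A} ensures $u|_{\Gamma_0} \in H^2(\Gamma_0)$, $\Delta_\Gamma u \in L^2(\Gamma_0)$ and $\partial_\nu u \in H^{1/2}(\Gamma) \subset L^2(\Gamma_0)$, so every pairing below is well-defined. Taking the $L^2(\Gamma_0)$ inner product of \cref{eq:quasim-gamma} with $u$ and integrating by parts on the closed manifold $\Gamma_0$ (so that no boundary terms arise) yields
\begin{equation}
h^2 \|\nabla_\Gamma u\|^2_{L^2(\Gamma_0)^{d-1}} + \i h \|u\|^2_{L^2(\Gamma_0)} - \|u\|^2_{L^2(\Gamma_0)} = - h^2 \langle \partial_\nu u, u\rangle_{L^2(\Gamma_0)} + \langle e, u\rangle_{L^2(\Gamma_0)}.
\end{equation}
Taking the real part kills the term $\i h\|u\|^2_{L^2(\Gamma_0)}$ and leaves
\begin{equation}
h^2 \|\nabla_\Gamma u\|^2_{L^2(\Gamma_0)^{d-1}} = \|u\|^2_{L^2(\Gamma_0)} - h^2 \re \langle \partial_\nu u, u\rangle_{L^2(\Gamma_0)} + \re \langle e, u\rangle_{L^2(\Gamma_0)}.
\end{equation}

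Next I would estimate the three terms on the right-hand side using the bounds already proved. By \cref{lem:V-dissip-h}, $\|u\|^2_{L^2(\Gamma_0)} = o(h^{\delta})$. For the middle term, Cauchy--Schwarz combined with \cref{lem:a-priori-neumann} (whence $h^2\|\partial_\nu u\|_{L^2(\Gamma_0)} = O(h)$) and again $\|u\|_{L^2(\Gamma_0)} = o(h^{\delta/2})$ gives $h^2|\langle \partial_\nu u, u\rangle_{L^2(\Gamma_0)}| = o(h^{1+\delta/2})$. For the last term, $|\langle e, u\rangle_{L^2(\Gamma_0)}| \leq \|e\|_{L^2(\Gamma_0)}\|u\|_{L^2(\Gamma_0)} = o(h^{1+\delta})$ by \cref{eq:base-qm}. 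Collecting the three contributions, $h^2\|\nabla_\Gamma u\|^2_{L^2(\Gamma_0)^{d-1}} = o(h^\delta) + o(h^{1+\delta/2})$; equivalently, this quantity is bounded by $\eps(h)(h^\delta + h^{1+\delta/2})$ for some $\eps(h) \to 0$.

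Finally I would take square roots. Using $\sqrt{a+b} \leq \sqrt a + \sqrt b$ for nonnegative $a, b$ and the arithmetic identity $(1+\delta/2)/2 = (2+\delta)/4$, we get $h\|\nabla_\Gamma u\|_{L^2(\Gamma_0)^{d-1}} \leq \sqrt{\eps(h)}\,(h^{\delta/2} + h^{(2+\delta)/4})$, and since $\sqrt{\eps(h)} \to 0$ this is exactly the asserted $o(h^{\delta/2} + h^{(2+\delta)/4})$. The argument is entirely elementary; there is no genuine obstacle, as this lemma is a short corollary of the a priori Neumann trace bound \cref{lem:a-priori-neumann} and the dissipation estimate \cref{lem:V-dissip-h}. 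The only two points that deserve a line of care are the justification of the integration by parts on $\Gamma_0$ (covered by the regularity in \cref{prop:carac-dom-A}) and the bookkeeping of the $o(\cdot)$ symbols when passing to the square root, which is why I would make the combined bound explicit with a vanishing factor $\eps(h)$ rather than manipulate $o(\cdot)$ notation directly.
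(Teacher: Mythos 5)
Your proof is correct and is essentially the same argument as the paper's: multiply the boundary equation \cref{eq:quasim-gamma} by $\overline{u}$, integrate by parts on the closed manifold $\Gamma_0$, take the real part, and then invoke \cref{lem:V-dissip-h,lem:a-priori-neumann}. The only difference is that you spell out the bookkeeping of the $o(\cdot)$ terms and the square-root step, which the paper leaves implicit.
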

\begin{proof}
Multiply \cref{eq:quasim-gamma} by $\overline{u}$ and integrate by parts over $\Gamma_0$ to see that, after taking  real parts,
\begin{equation}
\label{eq:boundary-pairing}
h^2 \int_{\Gamma_0} |\nabla_\Gamma u|^2 \, \d \sigma = \int_{\Gamma_0} |u|^2 \, \d \sigma - \re \int_{\Gamma_0}  h^2 \partial_\nu u \overline{u} - e\overline{u} \, \d \sigma. 
\end{equation}
Apply \cref{lem:V-dissip-h,lem:a-priori-neumann} to complete the proof.

\end{proof}

\subsubsection{Microlocalized estimates near the boundary}

\label{sec:microlocal}

The purpose of this section is to investigate the behavior of quasimodes $u$ near $\Gamma_0$ and, specifically for $o(h^3)$-quasimodes, improve the normal derivative estimate of \cref{lem:a-priori-neumann}. To do so, we will carry out a more involved analysis in two separate microlocal regimes, namely, at low and high \emph{tangential} frequencies.

We shall use the local setting of \cref{sec:normal-geodesic}.
First, let us recall and introduce some notation. Let $\R_+^{d} \triangleq \R^{d-1} \times (0, +\infty)$ be the open half-space made of points $y = (y_1,\dots,y_{d-1}, y_d) = (y_\tau, y_d) \in \R^{d}$ with $y_d > 0$. We will refer to $y_\tau$ and $y_d$ as the tangential and normal components of $y \in \R^{d}$, respectively. Similarly, $\xi_\tau = (\xi_1, \dots,\xi_{d-1}) \in \R^{d-1}$ indicates the tangential Fourier variable. We recall that
\begin{equation}
D_{y_i} = - \i \partial_{y_i}, \quad i  =1, \dots, d-1.
\end{equation}
In the subsequent analysis of quasimodes, it will be convenient to use semiclassical norms for $H^1$-spaces. If $X$ denotes $\Omega$, $\R^d_+$ or $\R^{d-1}$, then for $h > 0$ we let $H^1_h(X)$ be the space $H^1(X)$ equipped with the $h$-dependent norm
\begin{equation}
\label{eq:sc-norm}
\|w\|^2_{H^1_h(X)} \triangleq \int_X h^2 |\nabla w|^2 + |w|^2 \, \d x, \quad w \in H^1(X),
\end{equation}
where $\nabla$ and $\d x$ are the ambient gradient and Lebesgue measure. Similarly, if $M$ is $\Gamma$ or $\Gamma_0$, $H^1_h(M)$ is $H^1(M)$ with the norm
\begin{equation}
\|w\|_{H^1_h(M)}^2 \triangleq \int_M h^2 |\nabla_\Gamma w|^2 + |w|^2 \, \d \sigma, \quad w \in H^1(M).
\end{equation}
Now, given a  function $w$ of $\Omega$, we will write
\begin{equation}
w^{C_n} \triangleq (\kappa_n^{-1})^\ast w = w \circ \kappa_n^{-1}, \quad \tilde{w}^{C_n} \triangleq (\kappa_n^{-1})^\ast (\theta_nw), \quad n = 0, \dots, N,
\end{equation}
so that $w^{C_n}$, defined in $\kappa_n(O_n \cap \Omega)$, is the pullback of $w$ by $\kappa_n$, whereas $\tilde{w}^{C_n}$ is the pullback of its localized version $\theta_n w$ and trivially extends to a function of $\R^{d}_+$ if $n = 1, \dots N$ or of $\R^{d}$ if $n = 0$. On the other hand, if $n = 1, \dots, N$ and $w$ is instead a function of $\Gamma_0$, we use the same notation, so that the localized pullback $\tilde{w}^{C_n}$ becomes a function of $\R^{d-1}$.  In what follows we will not write the dependence on $n = 1, \dots, N$, and now $C=(O, \kappa)$ is \emph{any} of the $N$ local charts  defined in \cref{sec:normal-geodesic}.





Consider now $o(h^{1+\delta})$-quasimodes $u$. Then, localizing \cref{eq:quasim-PDE} via multiplication by $\theta$ and  taking pullbacks by $\kappa$, we see that \smash{$\tilde{u}^C$} solves 
\begin{subequations}
\label{eq:utilde}
\begin{align}
&(-h^{2}\Delta^{C}- 1 ) \Tilde{u}^C = \tilde{f}^C-[\theta,h^{2}\Delta]^Cu^{C}&& \mbox{in}~ \kappa(O\cap \Omega),\\
&(-h^{2}\Delta_{\Gamma}^{C}+ \i h - 1 ) \Tilde{u}^C = -\i h^2 \theta^C D_{y_d}u^C + \tilde{e}^C -[\theta,h^{2}\Delta_{\Gamma}]^Cu^{C}&& \mbox{in}~\kappa(O \cap \Gamma_0).
\end{align}
\end{subequations}
Here and in the sequel brackets $[X, Y] = XY -YX$ denote {commutators}. Since $\tilde{u}^C$ and all the terms at the right-hand sides of \cref{eq:utilde} vanish outside of $\kappa(\supp(\theta))$, we may rewrite \cref{eq:utilde} as 
\begin{subequations}
\label{eq:utilde-tilde}
\begin{align}
\label{eq:utilde-domain}
&(-h^{2}\tilde{\Delta}^{C}- 1 ) \Tilde{u}^C = \tilde{f}^C-[\theta^C,h^{2}\tilde{\Delta}^C]u^{C}&& \mbox{in}~ \R^{d}_+,\\
\label{eq:utilde-boundary}
&(-h^{2}\tilde{\Delta}_{\Gamma}^{C}+ \i h - 1 ) \Tilde{u}^C = -\i h^2 \theta^C D_{y_d}u^C + \tilde{e}^C -[\theta^C,h^{2}\tilde{\Delta}_{\Gamma}^C]u^{C}&& \mbox{in}~\R^{d-1},
\end{align}
\end{subequations}
where we recall from \cref{sec:normal-geodesic} that $\tilde{\Delta}^C$ and $\tilde{\Delta}^C_{\Gamma}$ coincide with their ``tilteless'' counterparts near $\kappa(\supp(\theta))$ but have tangential coefficients that vanish away from $\kappa(O)$. By the way, we may pick $\theta'' \in \D(O)$ taking values in $[0, 1]$ such that $\supp(\theta'')$ is contained in an open subset of $ \{ x \in O : \theta'(x) = 1 \}$ and $\theta'' = 1$ near $\supp(\theta)$.\footnote{Here the prime symbols {do not} denote derivatives.} 

Next, choose a smooth and (strictly) increasing cutoff $\rho : [0, +\infty) \to [0, 1]$ such that $\rho(r) = 0$ for $r^2 \leq 4/c$ and $\rho(r) = 1$ for (say) $r^2 \geq 8/c $, where $c > 0$ is the ellipticity constant from \cref{eq:elliptic}.
Define a ($h$-independent) symbol $\chi$ as follows: 
\begin{equation}
\label{eq:chi-rho}
\chi(y, \xi_\tau) \triangleq \theta''(\kappa^{-1}(y)) \rho(|\xi_\tau|), \quad y \in \R^d_+, \quad \xi_\tau \in \R^{d-1}.
\end{equation}
Then, $\chi$ belongs to the class of  tangential symbols $S^{0}_{\tau,h}$ defined in \cref{sec:tangential}. Note that $\chi$ has compact spatial support. Define a zeroth-order tangential semiclassical pseudodifferential operator $\Chi \in \Psi^0_{\tau,h}$ by
\begin{equation}
\Chi \triangleq \Opht (\chi),
\end{equation}
where the space $\Psi^0_{\tau,h}$ and
 the quantization procedure $\Opht$ are described in \cref{sec:tangential}, which also contains some additional notation and facts concerning pseudodifferential operators on 
 $\R^{d-1}$. 

The first result of this section is an estimate for quasimodes in the regime where tangential derivatives are dominant.

\begin{prop}[High-frequency regime] \label{prop:HF} For $o(h^{1+\delta})$-quasimodes $u$, letting $v \triangleq \Chi\tilde{u}^C$, we have 
\begin{equation}
\|v\|_{H_h^1(\R^d_+)}   + h\|D_{y_d}v|_{y_d = 0}\|_{L^2(\R^{d-1})} = o(1), \quad h \to 0.
\end{equation}
\end{prop}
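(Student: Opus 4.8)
\textbf{Proof plan for \cref{prop:HF}.}
The plan is to treat \cref{prop:HF} as an elliptic estimate in the high tangential frequency regime. On the support of the symbol $\chi$, the full semiclassical symbol of $-h^2\tilde\Delta^C - 1$, namely $p(y,\xi) = \xi_d^2 + \sum_{i,j} a_{ij}(y)\xi_i\xi_j - 1$, satisfies $\sum a_{ij}\xi_i\xi_j \geq c|\xi_\tau|^2 \geq 8$ thanks to \cref{eq:elliptic} and the choice of $\rho$, hence $p \geq 7 + \xi_d^2$ is elliptic; likewise $-h^2\tilde\Delta_\Gamma^C - 1$ has symbol $\sum b_{ij}\xi_i\xi_j - 1 \geq c|\xi_\tau|^2 - 1 \geq 7$ on the relevant support, by \cref{eq:ellip-belt}. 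So $v = \Chi\tilde u^C$ should be controlled by the right-hand sides of \cref{eq:utilde-tilde}, all of which are small in the appropriate norm.

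First I would apply $\Chi$ to \cref{eq:utilde-domain} and commute it through the operator: $(-h^2\tilde\Delta^C - 1)v = \Chi\big(\tilde f^C - [\theta^C, h^2\tilde\Delta^C]u^C\big) + [\,{-h^2\tilde\Delta^C}, \Chi\,]\tilde u^C$. The commutator $[-h^2\tilde\Delta^C,\Chi]$ lives in $h\Psi^{1}_{\tau,h}$ (one order gained from the commutator, but only one power of $h$), and since it is supported where $\theta'' \neq 1$ overlaps... actually the spatial cutoff in $\chi$ equals one near $\supp\theta$, so the spatial part of the commutator is supported away from $\supp\tilde u^C$; the frequency part $\rho'(|\xi_\tau|)$ is supported in a bounded annulus, so $[-h^2\tilde\Delta^C,\Chi]\tilde u^C = O_{L^2}(h\|\tilde u^C\|_{H^1_h})$, which by \cref{lem:V-dissip-h} is $O(h)\cdot O(1) = o(1)$ once divided by... wait, more carefully: I want to show $v$ itself is $o(1)$ in $H^1_h$, so I need the source terms to be $o(1)$ in $L^2$ (after inverting the elliptic operator, which loses at most a constant). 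The term $\Chi\tilde f^C$ is $o(h^{1+\delta})$; the commutator terms $\Chi[\theta^C,h^2\tilde\Delta^C]u^C$ are supported in the transition region of $\theta$ and are $O(h\|u^C\|_{H^1_h}) = O(h)$ there, and in fact $\Chi$ composed with a spatial cutoff disjoint from its own spatial support gives an $O(h^\infty)$ smoothing — or at least $o(1)$ — contribution; and $[-h^2\tilde\Delta^C,\Chi]\tilde u^C = O(h)$. So the interior equation gives $(-h^2\tilde\Delta^C - 1)v = g$ with $\|g\|_{L^2(\R^d_+)} = o(1)$.

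Next I would do the same to the boundary relation \cref{eq:utilde-boundary}: applying $\Chi$ (which is tangential, hence acts on $\R^{d-1}$ and commutes with restriction to $y_d = 0$), I get $(-h^2\tilde\Delta_\Gamma^C + \i h - 1)(v|_{y_d=0}) = -\i h^2\Chi(\theta^C D_{y_d}u^C)|_{y_d=0} + (\text{small})$. The principal term $-\i h^2 \Chi(\theta^C D_{y_d}u^C)|_{y_d=0}$ is $O(h^2 \cdot h^{-1}) = O(h)$ by the a priori Neumann estimate \cref{lem:a-priori-neumann} (which controls $h\|\partial_\nu u\|_{L^2(\Gamma_0)} = O(1)$), plus the $\i h v|_{y_d=0}$ term which I move to the other side or absorb. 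Since the boundary symbol is elliptic (bounded below by $7$ on the relevant frequency support), I get $\|v|_{y_d=0}\|_{L^2(\R^{d-1})} = o(1)$ and in fact $\|v|_{y_d=0}\|_{H^1_h(\R^{d-1})} = o(1)$. Then the standard elliptic boundary estimate for the half-space problem $(-h^2\tilde\Delta^C - 1)v = g$ with a Neumann-type boundary condition coming from the trace of $v$ — using integration by parts / a Rellich-type or direct energy identity against $\bar v$ — upgrades $\|v\|_{H^1_h(\R^d_+)} = o(1)$ and simultaneously controls the Neumann trace $h\|D_{y_d}v|_{y_d=0}\|_{L^2(\R^{d-1})} = o(1)$.

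The main obstacle I anticipate is the careful bookkeeping of the commutators and cutoffs — specifically, making precise that $\Chi[\theta^C, h^2\tilde\Delta^C]$ and $[-h^2\tilde\Delta^C, \Chi]$ contribute only $o(1)$, which requires the pseudodifferential calculus of \cref{sec:tangential} (composition, boundedness on $H^1_h$, the gain of one $h$ per commutator) together with the fact that the spatial supports are arranged so that $\chi \equiv \rho(|\xi_\tau|)$ near $\supp\tilde u^C$; and proving that inverting the elliptic boundary operator on $\R^{d-1}$ in the semiclassical $H^1_h$ norm is uniform in $h$. A secondary subtlety is handling the $y_d$-dependence: $v$ is not compactly supported in $y_d$, so the half-space elliptic estimate must be formulated on $\R^d_+$ with decay built in from the ellipticity $p \geq 7 + \xi_d^2$, and one should be careful that $v = \Chi\tilde u^C$ genuinely lies in $H^1(\R^d_+)$ to begin with — this follows since $\tilde u^C$ does (as $u \in \dom(A) \subset H^2(\Omega)$ by \cref{prop:carac-dom-A}) and $\Chi$ is bounded on $H^1$.
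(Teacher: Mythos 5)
Your high-level plan (microlocal ellipticity on $\supp\chi$, energy pairing against $\bar v$, a Rellich-type multiplier for the Neumann trace) is the same as the paper's, but there are two places where your proposal departs from what the paper does and where the details would need substantial repair.

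First, for the tangential trace of $v$: the paper never invokes the boundary relation \cref{eq:utilde-boundary} in the proof of \cref{prop:HF}. It simply observes that $\Chi$ restricts to a pseudodifferential operator on $\R^{d-1}$ that is uniformly bounded on $H^1_h(\R^{d-1})$, so $\|v\|_{H^1_h(\R^{d-1})} = O(\|\tilde u^C\|_{H^1_h(\R^{d-1})}) = O(\|u\|_{H^1_h(\Gamma_0)}) = o(1)$ directly from \cref{lem:V-dissip-h,lem:quasim-H1-gamma}. Your route — applying $\Chi$ to the boundary equation and ``inverting'' $-h^2\tilde\Delta_\Gamma^C + \i h - 1$ — is longer and has a gap: that operator is \emph{not} invertible, nor globally elliptic, on $\R^{d-1}$ (its coefficients $\tilde b_{ij}$ vanish outside a compact set), so it is only bounded below on the microsupport of $\chi$ and you would need another application of the microlocal G\aa rding inequality (\cref{th:garding}), not a plain inversion. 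The same remark applies to the interior operator $-h^2\tilde\Delta^C - 1$: it is elliptic only on $\supp\chi$, and the paper's Step~1 is precisely a G\aa rding argument producing a lower-order remainder term ($h^2\|\tilde u^C\|^2$), not an isomorphism.

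Second, you claim the half-space energy identity ``simultaneously controls'' $\|v\|_{H^1_h(\R^d_+)}$ and $h\|D_{y_d}v|_{y_d=0}\|$, but these two estimates are \emph{coupled}: integrating by parts in the pairing against $\bar v$ produces the boundary term $h^2\int_{\R^{d-1}} D_{y_d}v\,\bar v|_{y_d=0}$, which already involves the Neumann trace of $v$ — the very quantity you are trying to control. The paper breaks this circularity by a two-round bootstrap in Step~5 (first $\|v\|_{H^1_h}=O(1)$ from $\Chi$-boundedness, then $h\|D_{y_d}v\|_{L^2(\R^{d-1})}=O(1)$ from the Rellich multiplier, then $\|v\|_{H^1_h}=O(h^{1/2})$, then $h\|D_{y_d}v\|=O(h^{1/4})$). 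Your write-up elides this step; without it (or a Young-inequality closing argument that you would need to spell out), the estimate does not close. Also, a small inaccuracy: the commutator $\Chi[\theta^C,h^2\tilde\Delta^C]u^C$ is \emph{not} an $O(h^\infty)$ off-support term, because $\theta''\equiv 1$ on the transition region of $\theta$; it is merely $O(h)$ from the commutator gain, which suffices here but is not the mechanism you describe.
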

\begin{proof}
It follows from \cref{eq:utilde-domain} that $v$ solves
\begin{equation}
\label{eq:v-equation}
(-h^2 \tilde{\Delta}^C -1) v = \Chi \tilde{f}^C - \Chi[\theta^C, h^2 \tilde{\Delta}^C] u^C - [\Chi, h^2 \tilde{\Delta}^C]\tilde{u}^C \quad\mbox{in}~\R^d_+.
\end{equation}
Recall from \cref{sec:normal-geodesic} that $-h^2\tilde{\Delta}^C$ has the form
\begin{equation}
\label{eq:pullback-h-delta}
-h^{2}\tilde{\Delta}^C = h^2 D^2_{y_d} +  h^2 \sum_{i,j = 1}^{d-1}\tilde{a}_{ij}(y) D_{y_i}D_{y_j}.
\end{equation}
    We denote by $P_\tau$ the second term at the right-hand side of \cref{eq:pullback-h-delta}, that is, $P_\tau = P_\tau(h)$ is the tangential part of $-h^2 \tilde{\Delta}^C$, so that
    $-h^2\tilde{\Delta}^C = h^2 D_{y_d}^2 + P_\tau$. We also denote by $r$ the right-hand side of \cref{eq:v-equation}.
    \Cref{eq:v-equation} in this notation reads as follows:
    \begin{equation}
    \label{eq:v-equation-bis}
    (h^2D^{2}_{y_d} + P_\tau - 1)v = r \quad \mbox{in}~\R^{d}_+.
    \end{equation}
    The proof is divided into several steps. 
    
    \emph{Step 1: elliptic estimates.}
    Multiplying \cref{eq:v-equation-bis} by $\overline{v}$ and integrating over $\R_+^{d}$ lead to
    \begin{equation}
    \label{eq:scalar-v}
    \iint_{\R^d_+} h^2 D^2_{y_d} v \overline{v} \, \d y_d \, \d y_\tau + \iint_{\R_+^d} (P_\tau - 1)v \overline{v} \, \d y_d \, \d y_\tau = \int_{\R^{d-1}} \int_0^{+\infty} r \overline{v} \, \d y_d \, \d y_\tau.
    \end{equation}
    We start with the first term in \cref{eq:scalar-v}: integrating by part we  obtain
    \begin{equation}
        \label{eq:ipp}
        \iint_{\R^d_+} h^2 D^2_{y_d} v \overline{v} \, \d y_d \, \d y_\tau =  h^2 \int_{\R^d_+} |D_{y_d}v|^2 \, \d y_d \, \d y_\tau - h^2 \int_{\R^{d-1}} D_{y_d}v \overline{v} |_{y_d = 0} \, \d y_\tau,
    \end{equation}
    where we used that $v$ vanish for large $y_d$ since $\tilde{u}^C$ is compactly supported and $\Chi$ is nonlocal only in the tangential variable.
    To estimate  the second term in \cref{eq:scalar-v}, 
    we shall now employ an argument based on G\aa rding's inequality.
    The (semiclassical) principal symbol of the tangential differential operator $P_\tau -1$ is given by
    \begin{equation}
    \sigma_h(P_\tau - 1)(y, \xi_\tau) = \sum_{i,j=1}^{d-1}\tilde{a}_{ij}(y) \xi_i \xi_j -1, \quad y \in \R^d_+, \quad \xi_{\tau} = (\xi_1, \dots, \xi_{d-1}) \in \R^{d-1}.
    \end{equation}
    We will now use the properties the position and frequency cutoffs $\theta''$ and $\rho$ involved in the definition \cref{eq:chi-rho} of our microlocalization symbol $\chi$. On the one hand, in some open neighborhood $N \subset \kappa(O) \cap \overline{\R_+^d}$ of $\supp(\theta'' \circ \kappa^{-1})$ the modified coefficients $\tilde{a}_{ij}$ coincide with the original ones $a_{ij}$ and, as such, satisfy the ellipticity condition \cref{eq:elliptic} for all $\xi_\tau \in \R^{d-1}$. On the other hand, $\rho(|\xi_\tau|) > 0$ if and only if
    $|\xi_\tau|^2 > 4/c$. Let $U \triangleq N \times \{ \xi_\tau \in \R^{d-1} : |\xi_\tau|^2 > 2/c \}$. Then $\supp(\chi) \subset U$ and
    \begin{equation}
    \sigma_h(P_\tau - 1)(y, \xi_\tau) =  \sum_{i,j=1}^{d-1}{a}_{ij}(y) \xi_i \xi_j -1 \geq c|\xi_\tau|^2 - 1 \geq \frac{c}{2}|\xi_{\tau}|^2, \quad (y, \xi_\tau) \in U.
    \end{equation}
    As a result, the microlocal tangential G\aa rding inequality from \cref{th:garding}, which is adapted from \cite[Theorem 2.50]{RouLeb22a}, provides positive constants $K$ and $K'$ such that
    \begin{multline}
    \label{eq:appli-garding}
    \iint_{\R^d_+} (P_\tau-1)\Chi \varphi \overline{\Chi \varphi} \, \d y_d \, \d y_\tau \geq K\mleft 
    ( \iint_{\R^d_+} |\Chi \varphi|^2 + h^2  \sum_{i=1}^{d-1} |D_{y_i}\Chi \varphi|^2 \, \d y_d \, \d y_\tau
    \mright) \\ - {h^2} K' 
    \iint_{\R^d_+} |\varphi|^2 \, \d y_d \, \d y_\tau
    \end{multline}
    for all functions $\varphi$ in, say, $\overline{\S}(\R^d_+)$,\footnote{Following \cite{RouLeb22a}, $\overline{\S}(\R^d_+)$ denotes
    the space of restrictions to $\overline{\R_+^d}$ of Schwartz functions on $\R^d$.}
    and all positive (small enough) $h$. Applying\footnote{That is, after a suitable density argument, which we omit.} \cref{eq:appli-garding} to $o(h^{1+\delta})$-quasimodes $u$ (recall that $v = \Chi \tilde{u}^C$), and plugging the result into \cref{eq:scalar-v,eq:ipp}, we obtain,
    for some constant $K > 0$ independent of quasimodes,
    \begin{equation}
    \label{eq:bilan-ell}
    \|v\|^2_{H_h^1(\R^d_+)}
    \leq K \mleft (h^2 \|\tilde{u}^C\|^2_{L^2(\R^d_+)} + |\langle r, v \rangle_{L^2(\R^d_+)}|  + h^2|\langle D_{y_d} v , v\rangle_{L^2(\R^{d-1})}| \mright ).
    \end{equation}
    Here and in the sequel, for the ease of notation we might omit explicit indications that boundary traces are taken at $y_d =0$ when writing norms or scalar products on $\R^{d-1}$.
    The remainder of the proof consists in estimating the terms at the right-hand side of \cref{eq:bilan-ell}.

    \emph{Step 2: normal derivative.} Here we carry out a simple differential multiplier argument, not unrelated to that of \cref{lem:a-priori-neumann}. It is therefore convenient to put $P_\tau$ in divergence form and write
    \begin{equation}
    P_\tau = h^2\sum_{i, j = 1}^{d-1} D_{y_i} (\tilde{a}_{ij}(y) D_{y_j}) + R_\tau,
    \end{equation}
    with $R_\tau \in h \diff_{\tau,h}^1$. Pick a smooth cutoff $\psi : [0, +\infty) \to [0, 1]$ such that $\psi = 1$ near $0$ and $\psi = 0$ away from zero. We shall multiply \cref{eq:v-equation-bis} by $\psi(y_d) \overline{D_{y_d} v}$ and integrate over $\R^d_+$.
    First, note that
    \begin{equation}
    \label{eq:ipp-D-nor}
    h^2 \im \iint_{\R^d_+} \psi D_{y_d}^2 v \overline{D_{y_d} v} \, \d y_d \, \d y_\tau 
     = \frac{h^2}{2} \mleft. \int_{\R^{d-1}} |D_{y_d} v|^2  \, \d y_\tau \mright|_{y_d = 0} +  \frac{h^2}{2}  \iint_{\R^d_+} \psi' |D_{y_d} v|^2  \, \d y_d \, \d y_{\tau},
    \end{equation}
    where $\psi' = (\d /\d y_d) \psi$. Also,
    \begin{multline}
    \label{eq:ipp-D-tan}
     h^2\im \iint_{\R^d_+} \sum_{i,j=1}^{d-1} D_{y_i}(\tilde{a}_{ij}(y) D_{y_j}v) \psi(y_d) \overline{D_{y_d}v} \, \d y_d \, \d y_\tau  
     = - \mleft. \frac{h^2}{2} \int_{\R^{d-1}} \sum_{i,j = 1}^{d-1}\tilde{a}_{ij}(y) D_{y_j} v \overline{D_{y_i} v}  \, \d y_\tau \mright|_{y_d = 0} \\ - \frac{h^2}{2} \iint_{\R^{d}_+} \sum_{i,j = 1}^{d-1} D_{y_d}(\psi(y_d) \tilde{a}_{ij}(y)) D_{y_j} v \overline{D_{y_i} v} \, \d y_d \, \d y_\tau.
    \end{multline}
 With \cref{eq:ipp-D-nor,eq:ipp-D-tan} we readily deduce that 
 \begin{multline}
 \label{eq:normal-r-r}
h^2\|D_{y_d}v\|_{L^2(\R^{d-1})}^2 \\ \leq K' \mleft( \|r\|_{L^2(\R^d_+)} \|D_{y_d} v\|_{L^2(\R^d_+)} + \|(R_\tau -1) v\|_{L^2(\R^d_+)}\|D_{y_d} v\|_{L^2(\R^d_+)} + \|v\| _{H^1_h(\R^d_+)}^2 + \|v\|^2_{H_h^1(\R^{d-1})} \mright).
 \end{multline}
 for some constant positive constant $K'$ (again, independent of quasimodes).

\emph{Step 3: tangential derivatives.} Recall that, for $\varphi \in \overline{\S}(\R^d_+)$, $\Opht(\chi)\varphi|_{y_d = 0} = \Oph(\chi|_{y_d = 0})(\varphi|_{y_d = 0})$, where $\Oph$ indicates  standard semiclassical quantization on $\R^{d-1}$. Note also that since $\chi$ belongs to the class of tangential symbols \smash{$S^0_{\tau,h} = S^0_{\tau, h}(\overline{\R^d_+} \times \R^{d-1})$} then its boundary trace $\chi|_{y_d = 0}$ is in $S^0_h(\R^{d-1} \times \R^{d-1})$; see \cite[Remark 2.40]{RouLeb22a}.
In particular $\Chi$ is bounded uniformly in $h$ on the spaces $L^2(\R^{d-1})$ and $H^1_h(\R^{d-1})$; see, e.g., \cite[Proposition E.19]{DyaZwo19book}.
With this in mind, since $v = \Chi \tilde{u}^C$ we obtain
\begin{equation}
\label{eq:v-H1-tang}
\|v\|_{H_h^1(\R^{d-1})} 
= O(\|\tilde{u}^C\|_{H_h^1(\R^{d-1})}) 
= O(\|u\|_{H_h^1(\Gamma_0)}) = o(1), \quad h \to 0,
\end{equation}
where we also used
\cref{lem:V-dissip-h,lem:quasim-H1-gamma}.

    \emph{Step 4: commutators and lower-order terms.} We start by stating further boundedness properties of $\Chi$. Since its symbol $\chi$ is in the tangential class \smash{$S^0_{\tau,h}$, $\Chi$} is bounded uniformly in $h$ on the spaces  $L^2(\R^d_+)$  and also $L^2(0, +\infty; H^1_h(\R^{d-1}))$ by \cite[Theorem 2.47]{RouLeb22a}. Furthermore $[\Chi, hD_{y_d}] = h\Opht(D_{y_d}\chi) \in h\Psi_{\tau,h}^0(\R^d_+)$ (see \cite[Remark 2.46]{RouLeb22a}), and thus $\Chi$ is bounded on $H^1_h(\R^d_+)$ as well. We are now ready to deal with the rest $r$ from \cref{eq:v-equation,eq:v-equation-bis} and also the term involving $R_\tau$ in \cref{eq:normal-r-r}.
    By construction $R_\tau \in h \diff_{\tau,h}^1$ and
\begin{equation}
\label{eq:R-tau}
\|(R_\tau - 1) v\|_{L^2(\R^d_+)} \|D_{y_d}v\|_{L^2(\R^d_+)} = O( \|v\|^2_{H_h^1(\R^d_+)}), \quad h \to 0.
\end{equation}
Also,
\begin{equation}
\|\Chi \tilde{f}^C\|_{L^2(\R^d_+)} = O(\|\tilde{f}^C\|_{L^2(\R^d_+)}) = O(\|f\|_{L^2(\Omega)}) = o(h^{1+\delta}), \quad h \to 0.
\end{equation}
It remains to estimate the two commutator terms.  
First, $[\theta^C, h^2 \tilde{\Delta}^C] \in h \diff^1_{\tau,h}$ with coefficients vanishing of a compact subset  of $\kappa(O)$; thus, using also \cref{lem:V-dissip-h} we see that
\begin{equation}
\|\Chi[\theta, h^2 \tilde{\Delta}^C]u^C\|_{L^2(\R^d_+)} = O(\|[\theta, h^2 \tilde{\Delta}^C]u^C\|_{L^2(\R^d_+)}) = O(h \|u\|_{H^1_h(\Omega)}) = O(h), \quad h \to 0.
\end{equation}
Next we look at the commutator $[\Chi, h^2 \tilde{\Delta}^C]$, which we may write as (minus) the sum of $[\Chi, h^2 D_{y_d}^2]$ and $[\Chi, P_\tau]$. Observe on the one hand that
$[\Chi, h^2D^2_{y_d}] = h^2[\Chi, D_{y_d}]D_{y_d} + h^2 D_{y_d}[\Chi, D_{y_d}]$:
therefore, using the fact that $[\Chi, D_{y_d}] \in \Psi_{\tau,h}^0$ combined with continuity properties of $\Chi$, we obtain
\begin{equation}
\|[\Chi, h^2 D_{y_d}]\tilde{u}^C\|_{L^2(\R^d_+)} 
= O(h\|\tilde{u}^C\|_{H^1_h(\R^d_+)})
= O(h \|u\|_{H^1_h(\Omega)}) 
= O(h), \quad h \to 0.
\end{equation}
On the other hand, the Poisson bracket (in the tangential variables) between the principal symbols of $\Chi$ and $P_\tau$ is given by
\begin{equation}
\label{eq:poisson-a}
\{ \chi, \sigma_h(P_\tau)\}(y, \xi_\tau)
=\sum_{k=1}^{d-1}\dl{\chi}{\xi_k}(y, \xi_\tau) \sum_{i,j =1}^{d-1}\dl{\tilde{a}_{ij}}{y_k}(y) \xi_i \xi_j-\dl{\chi}{y_k}(y, \xi_\tau)  \mleft (2 \sum_{i =1}^{d-1} \tilde{a}_{ki}(y)\xi_{i} \mright)
\end{equation}
for all $y \in \R^d_+$ and $\xi_\tau = (\xi_1, \dots, \xi_{d-1}) \in \R^{d-1}$. After noting that $\partial_{\xi_\tau}\chi$ vanishes for large $|\xi_\tau|$ (uniformly in $y$), we infer from \cref{eq:poisson-a} and \cite[Corollary 2.45]{RouLeb22a} that $[\Chi, P_\tau] \in h \Psi_{\tau,h}^1$, and thus
\begin{equation}
\|[\Chi, P_\tau]\tilde{u}^C\|_{L^2(\R^d_+)} = O(h\|\tilde{u}^C\|_{H^1_h(\R^d_+)}) = O(h), \quad h \to 0.
\end{equation}
Summing up, we have shown  that the term $r$ in \cref{eq:v-equation-bis} decays like $O(h)$ in $L^2$-norm as $h \to 0$.

\emph{Step 5: conclusion.} Since $\Chi$ is bounded on $H^1_h(\R^d_+)$ uniformly in $h$, it is clear that 
\begin{equation}
\|v\|_{H_h^1(\R^d_+)} = O(1), \quad h \to 0.
\end{equation}
In light of the above developments, we may now deduce from \cref{eq:normal-r-r} that
\begin{equation}
h \|D_{y_d}v\|_{L^2(\R^{d-1})} = O(1), \quad h \to 0,
\end{equation}
and at this point, with the Cauchy--Schwarz inequality and the various previous estimates,  it readily follows from \cref{eq:bilan-ell} that, in fact,
\begin{equation}
\|v\|_{H^1_h(\R^d_+)} = O(h^{1/2}),  \quad h \to 0.
\end{equation}
With this improved estimate in hand we may return to \cref{eq:normal-r-r} and finally deduce that
\begin{equation}
h \|D_{y_d}v\|_{L^2(\R^{d-1})} = O(h^{1/4}), \quad h \to 0.
\end{equation}
The proof is now complete.
\end{proof}

For the estimate at low tangential frequencies, we consider $o(h^3)$-quasimodes only.

\begin{prop}[Low-frequency regime]\label{prop:LF} For $o(h^3)$-quasimodes $u$, letting $w \triangleq (1 - \Chi) \tilde{u}^C$,  we have
\begin{equation}
\label{eq:normal-w}
 h\|D_{y_d}w|_{y_d = 0}\|_{L^2(\R^{d-1})} = o(1), \quad h \to 0.
\end{equation}
\end{prop}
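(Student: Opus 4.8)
The idea is that in the region $1 - \Chi$, i.e.\ where tangential frequencies are bounded by (roughly) $\sqrt{8/c}$, the operator $-h^2\tilde\Delta^C$ is \emph{elliptic in the normal variable}: on the support of the relevant symbols one has $h^2\xi_d^2 \approx 1 - h^2|\xi_\tau|^2 + \text{(small)}$, so $\xi_d$ is pinned away from zero but cannot be large either. The plan is to exploit this by testing the localized equation \cref{eq:utilde-tilde} against $\psi(y_d)\overline{D_{y_d}w}$ (with $\psi$ a normal cutoff equal to $1$ near $y_d=0$) exactly as in Step 2 of the proof of \cref{prop:HF}, so that the boundary term $h^2\|D_{y_d}w|_{y_d=0}\|^2_{L^2(\R^{d-1})}$ gets bounded by interior quantities plus the tangential boundary data. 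Concretely, writing $w = (1-\Chi)\tilde u^C$, it solves
\begin{equation}
(h^2 D_{y_d}^2 + P_\tau - 1)w = (1-\Chi)\tilde f^C - (1-\Chi)[\theta^C, h^2\tilde\Delta^C]u^C + [\Chi, h^2\tilde\Delta^C]\tilde u^C \quad\mbox{in}~\R^d_+,
\end{equation}
with a corresponding boundary relation on $\R^{d-1}$ inherited from \cref{eq:utilde-boundary}. The commutator $[\Chi, h^2\tilde\Delta^C]$ is $O(h)$ in the relevant operator norms by the symbolic calculus (as already established in Step 4 of \cref{prop:HF}), so the right-hand side is $o(h^{1+\delta})+O(h)$ in $L^2(\R^d_+)$; for $o(h^3)$-quasimodes, after the $\dom(A)$-a priori bounds, this is in fact $o(h)$.

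The multiplier computation produces, schematically,
\begin{equation}
h^2\|D_{y_d}w|_{y_d=0}\|^2_{L^2(\R^{d-1})} \lesssim \|r\|_{L^2(\R^d_+)}\|D_{y_d}w\|_{L^2(\R^d_+)} + \|w\|^2_{H^1_h(\R^d_+)} + h^2\big\|\text{tangential boundary data of }w\big\|^2,
\end{equation}
so the task reduces to two things: (i) controlling $\|D_{y_d}w\|_{L^2(\R^d_+)}$ and $\|w\|_{H^1_h(\R^d_+)}$, and (ii) controlling the tangential traces appearing on the boundary. For (ii) one uses that $w|_{y_d=0} = (1-\Oph(\chi|_{y_d=0}))(\tilde u^C|_{y_d=0})$, which is bounded in $L^2$ and in $H^1_h(\R^{d-1})$ by $\|u\|_{H^1_h(\Gamma_0)} = o(1)$ via \cref{lem:V-dissip-h,lem:quasim-H1-gamma}; moreover the boundary equation \cref{eq:utilde-boundary}, together with the $o(h^3)$ bounds on $e$ and the already-known $h\|\partial_\nu u\|_{L^2(\Gamma_0)} = O(1)$ from \cref{lem:a-priori-neumann}, lets one absorb the normal-derivative contribution on the boundary into terms that are $o(1)$ after multiplication by $h^2$ (crucially, the damping term $\i h$ is what makes $\|u\|_{L^2(\Gamma_0)}$ small, hence the $\delta = 2$ threshold). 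The interior bound (i) is where normal ellipticity enters: on $\operatorname{supp}(1-\chi)$ one has $|\sigma_h(h^2D_{y_d}^2 + P_\tau - 1)| = |h^2\xi_d^2 + \sigma_h(P_\tau) - 1|$, and since $\sigma_h(P_\tau)(y,\xi_\tau) \le (8/c)\cdot c = 8$ there (wait — that bound is too crude; rather, on $\operatorname{supp}(1-\chi)$ we only know $|\xi_\tau|^2 \le 8/c$, so $\sigma_h(P_\tau) \le C$), one sees that away from $y_d$ large the equation is not elliptic by itself — so instead one should estimate $\|w\|_{H^1_h(\R^d_+)}$ directly from the equation by pairing with $\overline{w}$ as in Step 1 of \cref{prop:HF}, using that $w$ vanishes for large $y_d$, and bootstrapping; this gives $\|w\|_{H^1_h(\R^d_+)} = O(1)$, then after feeding back the boundary estimate, $o(1)$.

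\textbf{Main obstacle.} The delicate point is not the multiplier identity itself but handling the boundary term on $\R^{d-1}$ that couples $w$ to $h^2\partial_\nu u$ via \cref{eq:utilde-boundary}: one must show that, in the low-frequency zone, the contribution of the normal derivative to the boundary equation can be genuinely controlled rather than merely bounded. This is precisely the ``stability bottleneck in the regime where tangential derivatives are negligible'' flagged in the introduction, and it is the reason this proposition is restricted to $o(h^3)$-quasimodes: one needs the sharp $\|u\|_{L^2(\Gamma_0)} = o(h)$ and a careful pairing of the boundary equation against $\overline{w}$ (or against $h^2\overline{D_{y_d}w}$) to close the estimate, exploiting that $1 - \Oph(\chi|_{y_d=0})$ is microlocally supported where $h^2|\xi_\tau|^2 \le 8$ so the boundary symbol $h^2|\xi_\tau|^2 + \i h - 1$ is, modulo $O(h)$, equal to $-1$ and hence invertible. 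I expect the bulk of the work to lie in tracking these boundary terms and confirming that no $O(1)$ obstruction survives.
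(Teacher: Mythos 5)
Your proposal takes a genuinely different route from the paper's, and it has a gap that I believe cannot be repaired along the lines you indicate.

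The paper's proof never touches the interior equation for $w$. It works entirely on the boundary $\R^{d-1}$: starting from \cref{eq:utilde-boundary}, it uses the commutator identity $(1-\Chi)\theta^C D_{y_d}u^C = D_{y_d}w + [\Chi,D_{y_d}]\tilde u^C - (1-\Chi)(D_{y_d}\theta^C)u^C$ to algebraically isolate $\i h^2 D_{y_d}w|_{y_d=0}$, and then estimates every remaining term on $\R^{d-1}$. The engine is the observation (which you flag only at the very end) that the boundary symbol $(1-\chi)|_{y_d=0}$ vanishes for large $|\xi_\tau|$, hence belongs to $S^{-2}_h(\R^{d-1}\times\R^{d-1})$, so $1-\Chi$ maps $L^2(\R^{d-1})\to H^2_h(\R^{d-1})$ boundedly and uniformly in $h$. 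Consequently $\|(-h^2\tilde\Delta^C_\Gamma + \i h -1)(1-\Chi)\tilde u^C\|_{L^2(\R^{d-1})} = O(\|\tilde u^C\|_{L^2(\R^{d-1})}) = O(\|u\|_{L^2(\Gamma_0)}) = o(h)$ for $o(h^3)$-quasimodes, by \cref{lem:V-dissip-h}; and all remainder terms are similarly $o(h)$ using \cref{lem:quasim-H1-gamma}. Dividing by $h^2$ gives $h\|D_{y_d}w|_{y_d=0}\|_{L^2}=o(1)$.

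Your plan instead tries to replicate Step~2 of \cref{prop:HF} for $w$: test the \emph{interior} localized equation against $\psi(y_d)\overline{D_{y_d}w}$ and bound the trace term by interior norms plus tangential boundary data. This cannot close, and you partly sense why. The schematic inequality you write produces a term $\|w\|^2_{H^1_h(\R^d_+)}$ on the right-hand side, and that quantity is \emph{not} $o(1)$ for general $o(h^3)$-quasimodes --- it is only $O(1)$. Indeed, the operator $h^2D_{y_d}^2 + P_\tau - 1$ is not elliptic on $\operatorname{supp}(1-\chi)$ (the characteristic variety $h^2\xi_d^2 + \sigma_h(P_\tau) = 1$ lives there), so there is no analogue of the G\aa rding-based Step~1 of \cref{prop:HF} to start a bootstrap; and in fact the interior mass of a quasimode can sit almost entirely in the low-tangential-frequency region --- this is exactly why the paper later needs the Hautus test and geometric control to conclude, and why the smallness of $\|w\|_{H^1_h(\R^d_+)}$ is not available at this stage. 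Your appeal to ``pairing with $\overline{w}$ as in Step 1 of \cref{prop:HF}, \dots and bootstrapping'' therefore produces only $O(1)$ for $\|w\|_{H^1_h(\R^d_+)}$, and the multiplier identity then yields only $h\|D_{y_d}w|_{y_d=0}\|_{L^2} = O(1)$, which is exactly \cref{lem:a-priori-neumann} and no improvement. The fix is to abandon the interior multiplier entirely, as the paper does, and read off the normal derivative directly from the boundary equation using the tangential smoothing of $1-\Chi$ --- the key point being that $(1-\chi)|_{y_d=0}$ compactly supports the frequency, not merely that the boundary symbol is ``$-1$ modulo $O(h)$''.
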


\begin{proof}
We shall keep in mind the remarks,  properties and notation convention from the proof of \cref{prop:HF}.
As a result of \cref{eq:utilde-boundary} we see that $w$ solves
\begin{equation}
\label{eq:w-boundary}
(-h^{2}\tilde{\Delta}_{\Gamma}^{C}+ \i h - 1 ) w = -\i h^2 (1 - \Chi)\theta^C D_{y_d}u^C + (1 - \Chi)\tilde{e}^C -[\theta,h^{2}\Delta_{\Gamma}]^Cu^{C} + [\Chi, h^2 \tilde{\Delta}^C]\tilde{u}^C \quad \mbox{in}~\R^{d-1}.
\end{equation}
The proof is divided into two steps.

\emph{Step 1: normal derivative.} We start by noting that
\begin{equation}
\label{eq:comm-Dy}
\begin{aligned}
(1 - \Chi) \theta^C D_{y_d} u^C & = (1 - \Chi) D_{y_d} \tilde{u}^C - (1 - \Chi)(D_{y_d} \theta^C)u^C \\
&= D_{y_d}w + [\Chi, D_{y_d}]\tilde{u}^C - (1 - \Chi)(D_{y_d} \theta^C)u^C,
\end{aligned}
\end{equation}
and thus, in view of the desired estimate \cref{eq:normal-w}, we may rewrite \cref{eq:w-boundary} as
\begin{equation}
\label{eq:w-boundary-bis}
\i h^2 D_{y_d}w = -(-h^{2}\tilde{\Delta}_{\Gamma}^{C}+ \i h - 1 )(1 - \Chi)\tilde{u}^C   + r \quad \mbox{in}~\R^{d-1},
\end{equation}
where the rest $r$ is made of terms left over from \cref{eq:w-boundary,eq:comm-Dy}. On the boundary $\R^{d-1}$, $1 - \Chi = \Oph((1 - \chi)|_{y_d = 0})$ and the (smooth, bounded) symbol $(1 - \chi)|_{y_d = 0}$ vanishes for large $|\xi_\tau|$ (uniformly in $y_\tau$). In particular, it belongs to the class $S^{-2}_h(\R^{d-1} \times \R^{d-1})$. Therefore, by \cite[Proposition E.19]{DyaZwo19book}, $1-\Chi$ continuously (and uniformly in $h$) maps $L^2(\R^{d-1})$ into $H^2_h(\R^{d-1})$, which is simply $H^2(\R^{d-1})$ with the semiclassical norm
\begin{equation}
\|\varphi\|^2_{H^2_h(\R^{d-1})} \triangleq \int_{\R^{d-1}}  \sum_{
\substack{
\alpha \in \N^{d-1} \\
|\alpha| \leq 2}} h^{2|\alpha|
}|D^\alpha \varphi|^2 \, \d y_\tau, \quad \varphi \in H^2(\R^{d-1}).
\end{equation}
As an immediate consequence,
\begin{equation}
\|(-h^{2}\tilde{\Delta}_{\Gamma}^{C}+ \i h - 1 )(1 - \Chi)\tilde{u}^C\|_{L^2(\R^{d-1})} = O(\|(1-\Chi)\tilde{u}^C\|_{H^2_h(\R^{d-1})}) = O(\|\tilde{u}^C\|_{L^2(\R^{d-1})}), \quad h \to 0.
\end{equation}
Here we specifically consider $o(h^3)$-quasimodes: recall from \cref{lem:V-dissip-h} that
\begin{equation}
\label{eq:H2-u^C}
\|\tilde{u}^C\|_{L^2(\R^{d-1})} = O(\|u\|_{L^2(\Gamma_0)}) = o(h), \quad h \to 0.
\end{equation}
Then,  it follows from \cref{eq:w-boundary-bis} and the above observations that
\begin{equation}
h \|D_{y_d} w\|_{L^2(\R^{d-1})} = h^{-1}\|r\|_{L^2(\R^{d-1})} + o(1), \quad h \to 0.
\end{equation}

\emph{Step 2: left-over terms.} Now it remains to handle the term $r$, which we must prove is of size $o(h)$ as $h \to 0$.
The operator $1 - \Chi$ is bounded on $L^2(\R^{d-1})$ uniformly in $h$ and thus
\begin{equation}
\|(1 - \Chi)\tilde{e}^C\|_{L^2(\R^{d-1})} = O(\|e\|_{L^2(\Gamma_0)}) = o(h^3), \quad h \to 0.
\end{equation}
Similarly,
\begin{equation}
\|(1 - \Chi)(D_{y_d} \theta^C)u^C\|_{L^2(\R^{d-1})} = O(\|u\|_{L^2(\Gamma_0)}) = o(h), \quad h \to 0.
\end{equation}
Next we deal with the commutators. Noting that $[\theta, h^2 \Delta_\Gamma]^C \in h\diff_h^1(\R^{d-1})$ we get
\begin{equation}
\|[\theta,h^{2}\Delta_{\Gamma}]^Cu^{C}\|_{L^2(\R^{d-1})} 
= O(h\|u\|_{H^1_h(\Gamma_0)}) = o(h^2), \quad h \to 0,
\end{equation}
where we also used \cref{lem:quasim-H1-gamma}. Also, just as in the proof of \cref{prop:HF},
\begin{equation}
\|[\Chi, D_{y_d}]\tilde{u}^C\|_{L^2(\R^{d-1})} = O(\|\tilde{u}^C\|_{L^2(\R^{d-1})}) = o(h), \quad h \to 0.
\end{equation}
The last term to estimate is given by $[\Chi, h^2\tilde{\Delta}^C]\tilde{u}^C$: by a Poisson bracket argument almost identitical to that of the previous proof, together with, e.g., \cite[Proposition E.8]{DyaZwo19book}, we have
$[\Chi, h^2 \tilde{\Delta}^C] \in h \Psi_{h}^1(\R^{d-1})$. Then, finally,
\begin{equation}
\|[\Chi, h^2\tilde{\Delta}^C]\tilde{u}^C\|_{L^2(\R^{d-1})} = O(h^2 \|{u}\|_{H^1(\Gamma_0)} + h \|u\|_{L^2(\Gamma_0)}) = o(h^2), \quad h \to 0,
\end{equation}
and the proof is complete.
\end{proof}



We conclude this section by gluing together the estimates of \cref{prop:HF,prop:LF}. 
\begin{prop}[Improved normal derivative estimate]
\label{prop:improved-neumann} For $o(h^3)$-quasimodes $u$,
\begin{equation}
h \|\partial_\nu u\|_{L^2(\Gamma_0)} = o(1), \quad h \to 0.
\end{equation}
\end{prop}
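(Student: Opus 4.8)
The substance of this statement is already contained in \cref{prop:HF,prop:LF}; what remains is a gluing step and a transfer from the normal‑coordinate trace to the genuine Neumann trace on $\Gamma_0$. First I would fix $n \in \{1, \dots, N\}$ and, in the chart $C_n$, decompose $\tilde u^{C_n} = v + w$ with $v \triangleq \Chi\tilde u^{C_n}$ and $w \triangleq (1 - \Chi)\tilde u^{C_n}$. An $o(h^3)$‑quasimode is in particular an $o(h^{1+\delta})$‑quasimode with $\delta = 2$, so \cref{prop:HF} gives $h\|D_{y_d}v|_{y_d=0}\|_{L^2(\R^{d-1})} = o(1)$, while \cref{prop:LF} gives $h\|D_{y_d}w|_{y_d=0}\|_{L^2(\R^{d-1})} = o(1)$; summing and using the triangle inequality,
\[ h\bigl\|D_{y_d}\tilde u^{C_n}|_{y_d=0}\bigr\|_{L^2(\R^{d-1})} = o(1), \quad h \to 0. \]

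Next I would convert this into an estimate for $\theta_n\partial_\nu u$. Since $\tilde u^{C_n} = \theta_n^{C_n} u^{C_n}$ on the chart domain, the Leibniz rule gives on $\{y_d = 0\}$
\[ \theta_n^{C_n} D_{y_d} u^{C_n} = D_{y_d}\tilde u^{C_n} - (D_{y_d}\theta_n^{C_n}) u^{C_n}. \]
By the choice of normal geodesic coordinates in \cref{sec:normal-geodesic} we have $\partial_\nu^{C_n} = \i D_{y_d}$ on the boundary, so the left‑hand side equals $\i$ times the pullback of $\theta_n\partial_\nu u$. The correction term is harmless: $D_{y_d}\theta_n^{C_n}$ is smooth and compactly supported, hence $\|(D_{y_d}\theta_n^{C_n}) u^{C_n}|_{y_d=0}\|_{L^2(\R^{d-1})} = O(\|u\|_{L^2(\Gamma_0)})$, and for $o(h^3)$‑quasimodes \cref{lem:V-dissip-h} (with $\delta = 2$) gives $\|u\|_{L^2(\Gamma_0)} = o(h)$. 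Therefore $\|(\theta_n\partial_\nu u)^{C_n}|_{y_d=0}\|_{L^2(\R^{d-1})} = o(h^{-1})$.

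Finally I would return to $\Gamma_0$ and sum over the partition of unity. On $\Gamma_0 \cap O_n$ the map $\kappa_n$ is a diffeomorphism whose associated density $\det(g^{C_n})^{1/2}$ is smooth and bounded above and below on the compact set $\supp\theta_n$, so the $L^2(\Gamma_0)$‑norm of the compactly supported function $\theta_n\partial_\nu u$ is comparable to the $L^2(\R^{d-1})$‑norm of its pullback; hence $\|\theta_n\partial_\nu u\|_{L^2(\Gamma_0)} = o(h^{-1})$. Because $\theta_0$ vanishes on $\Gamma_0$, we have $\sum_{n=1}^N \theta_n \equiv 1$ there, so $\partial_\nu u = \sum_{n=1}^N \theta_n\partial_\nu u$ on $\Gamma_0$, and the triangle inequality yields $\|\partial_\nu u\|_{L^2(\Gamma_0)} = o(h^{-1})$, that is, $h\|\partial_\nu u\|_{L^2(\Gamma_0)} = o(1)$.

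I do not expect a genuine obstacle in this proof: all the analytic work has been carried out in \cref{prop:HF,prop:LF}, and the present argument is bookkeeping. The only points requiring a little care are the chart‑by‑chart splitting via the partition of unity and the comparison between the surface measure on $\Gamma_0$ and the Lebesgue measure on $\R^{d-1}$ under the charts $\kappa_n$; both are routine because the relevant Jacobians are smooth and uniformly positive on the compact supports involved.
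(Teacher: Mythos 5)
Your proof is correct and follows essentially the same route as the paper's: partition of unity, local charts, splitting $\tilde u^{C_n}$ into $\Chi_n$ and $1-\Chi_n$ pieces, then \cref{prop:HF,prop:LF}. The only minor deviation is that you track $\theta_n\partial_\nu u$ rather than $\partial_\nu(\theta_n u)$, which forces you to estimate the (harmless) correction term $(D_{y_d}\theta_n^{C_n})u^{C_n}$; the paper sidesteps it by noting $\partial_\nu u = \sum_{n=1}^N\partial_\nu(\theta_n u)$ on $\Gamma_0$ and bounding $\|\partial_\nu(\theta_n u)\|_{L^2(\Gamma_0)}$ directly by $\|D_{y_d}\tilde u^{C_n}\|_{L^2(\R^{d-1})}$, but this is a cosmetic difference, not a different argument.
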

\begin{proof}
We indicate again dependence on the particular local chart $C_n = (O_n, \kappa_n)$ from \cref{sec:normal-geodesic} with subscripts $n$.
With the partition of unity that we defined previously, we have $\partial_\nu u = \sum_{n = 1}^N \partial_\nu (\theta_n u)$ on $\Gamma_0$, and thus
\begin{equation}
\|\partial_\nu u\|_{L^2(\Gamma_0)} \leq \sum_{n = 1}^{N} \|\partial_\nu (\theta_n u)\|_{L^2(\Gamma_0)} \leq K \sum_{n = 1}^N \|D_{y_d}\tilde{u}^C_n\|_{L^2(\R^{d-1})},
\end{equation}
where the positive constant $K$ comes from changes of variables by means of the diffeomorphisms $\kappa_n$ and is independent of quasimodes. For each $n = 1, \dots, N$,
\begin{equation}
\|D_{y_d}\tilde{u}^C_n\|_{L^2(\R^{d-1})} \leq  \|D_{y_d} \Chi_n\tilde{u}^C_n\|_{L^2(\R^{d-1})} + \|D_{y_d}(1 - \Chi_n)\tilde{u}^C_n\|_{L^2(\R^{d-1})},
\end{equation}
and the result immediately follows from \cref{prop:HF,prop:LF}.
\end{proof}

\subsubsection{Decoupling arguments and conclusion}
\label{sec:decoupling}

At this point, we know from \cref{lem:quasim-H1-gamma,prop:improved-neumann} that $o(h^3)$-quasimodes $u$ satisfy
\begin{equation}
\label{eq:summary-gamma}
\|u\|_{H^1_h(\Gamma_0)} + h \|\partial_\nu u\|_{L^2(\Gamma_0)} = o(1), \quad h \to 0.
\end{equation}
Thus, we have suitable control over boundary traces on $\Gamma_0$ in both tangential and normal directions. Our next goal is to take advantage of the geometric control condition  to propagate our boundary estimates \cref{eq:summary-gamma} over the whole domain $\Omega$.
To that end, we shall use a splitting $u = u_1 + u_2$ of quasimodes in the following manner.
\begin{enumerate}
    \item  Define $u_1$ as a suitable lifting of the boundary data $u|_{\Gamma_0}$. Here we exploit \emph{well-posedness}
    of the Dirichlet problem for the wave equation to derive interior and boundary estimates of $u_1$.
    \item Recover $u_2$ in terms of its Neumann trace
    $\partial_\nu u_2 = \partial_\nu (u - u_1)$
    on $\Gamma_0$.
    This step relies on \emph{exact observability} of the wave equation with homogeneous Dirichlet boundary condition.
\end{enumerate}
To implement the first step of our strategy, we need the following lemma.

\begin{lemma}
\label{lem:dirichlet-pb} 
Let $h > 0$. For every $g \in H^{1/2}(\Gamma)$ there exists a unique $w \in H^1(\Omega)$ such that
\begin{subequations}
\label{eq:dirichlet-pb}
\begin{align}
\label{eq:dirichlet-pde}
&(-h^2 \Delta + 2 \i h - 1)w = 0 &&\mbox{in}~\Omega, \\
&w = g &&\mbox{on}~\Gamma.
\end{align}
\end{subequations}
\end{lemma}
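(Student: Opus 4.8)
The plan is to reduce \cref{eq:dirichlet-pb} to a problem with homogeneous boundary data and then apply the Lax--Milgram theorem; the key observation is that, although the zeroth-order coefficient $2\i h - 1$ has negative real part, the dissipative imaginary part $2\i h$ restores coercivity once combined with a Poincar\'e inequality (here $h > 0$ is fixed).

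First I would fix a bounded lifting $E \colon H^{1/2}(\Gamma) \to H^1(\Omega)$ of the trace map (e.g.\ harmonic extension) and look for $w$ in the form $w = \tilde w + Eg$, so that \cref{eq:dirichlet-pb} is equivalent to finding $\tilde w \in H^1_0(\Omega)$ with $(-h^2\Delta + 2\i h - 1)\tilde w = F$ in $\Omega$, where $F \triangleq -(-h^2\Delta + 2\i h - 1)Eg$ is well-defined in $H^{-1}(\Omega)$ since $\Delta Eg \in H^{-1}(\Omega)$. Introduce the continuous sesquilinear form on $H^1_0(\Omega)$ defined by $a(\varphi, \psi) \triangleq h^2 \int_\Omega \nabla\varphi \cdot \overline{\nabla\psi} \, \d x + (2\i h - 1)\int_\Omega \varphi\,\overline\psi \, \d x$; then $\tilde w$ is characterized by $a(\tilde w, \varphi) = \langle F, \varphi\rangle$ for all $\varphi \in H^1_0(\Omega)$, which the Lax--Milgram theorem provides, together with uniqueness, as soon as $a$ is coercive.

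To see that $a$ is coercive, compute for $\varphi \in H^1_0(\Omega)$ that $\re a(\varphi, \varphi) = h^2\|\nabla\varphi\|^2_{L^2(\Omega)} - \|\varphi\|^2_{L^2(\Omega)}$ and $\im a(\varphi,\varphi) = 2h\|\varphi\|^2_{L^2(\Omega)}$, whence $\re a(\varphi,\varphi) + (2h)^{-1}\im a(\varphi,\varphi) = h^2\|\nabla\varphi\|^2_{L^2(\Omega)}$ and therefore $|a(\varphi,\varphi)| \geq (1 + (2h)^{-2})^{-1/2}\, h^2\|\nabla\varphi\|^2_{L^2(\Omega)}$. Since $h$ is fixed and the Poincar\'e inequality on $H^1_0(\Omega)$ controls $\|\varphi\|_{L^2(\Omega)}$ by $\|\nabla\varphi\|_{L^2(\Omega)}$, this yields $|a(\varphi,\varphi)| \geq c_h \|\varphi\|^2_{H^1(\Omega)}$ for some $c_h > 0$. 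With $\tilde w$ in hand, testing the identity $a(w,\varphi) = 0$ against $\varphi \in \D(\Omega)$ shows that $w$ solves \cref{eq:dirichlet-pde} in the distributional sense, while $w - Eg \in H^1_0(\Omega)$ yields the boundary condition; uniqueness of $w$ follows because the difference of two solutions lies in $H^1_0(\Omega)$ and is annihilated by the coercive form $a$. Equivalently, one could simply observe that $h^{-2} - 2\i h^{-1}$, having nonzero imaginary part, is not a Dirichlet eigenvalue of $-\Delta$, and invoke well-posedness of the corresponding shifted Dirichlet problem. The only mildly delicate point is the coercivity estimate above; once the dissipative term is brought in it is immediate, and the remainder of the argument is routine.
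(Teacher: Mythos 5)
Your proposal is correct. The paper's own proof carries out the same first reduction — lift the boundary data by the harmonic extension $D$ so that the problem becomes a homogeneous Dirichlet problem for the operator $-h^2\Delta_D + 2\i h - 1$ — but then inverts that operator by appealing directly to spectral theory: $-h^2\Delta_D$ is positive self-adjoint, so the \emph{non-real} number $1 - 2\i h$ lies in its resolvent set, and well-posedness follows with no further computation. Your Lax--Milgram route verifies the same invertibility by hand. Your coercivity computation is sound: the identity $\re a(\varphi,\varphi) + (2h)^{-1}\im a(\varphi,\varphi) = h^2\|\nabla\varphi\|_{L^2(\Omega)}^2$ is precisely the statement that the form $a$, after multiplication by a fixed unimodular constant proportional to $1 - \i(2h)^{-1}$, is coercive in the usual real-part sense, so the complex Lax--Milgram lemma applies; combined with Poincar\'e on $H^1_0(\Omega)$ this gives $|a(\varphi,\varphi)| \geq c_h\|\varphi\|^2_{H^1(\Omega)}$ as you write. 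You even note the spectral observation as an equivalent alternative at the end, which is literally what the paper does, so the two arguments differ only in whether the resolvent bound is quoted abstractly or rederived via a bilinear-form estimate. The abstract route is shorter; yours is more self-contained and makes visible what the dissipative term $2\i h$ buys, namely that it rotates the numerical range into a half-plane despite the negative real zeroth-order coefficient. Either is fine here since the lemma makes no claim uniform in $h$ — the quantitative, $h$-uniform Dirichlet-to-Neumann bounds are handled separately in \cref{prop:diri-neu}.
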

\begin{proof}
Let $D \in \L(H^{1/2}(\Gamma), H^1(\Omega))$ denote the Dirichlet \emph{harmonic extension} map, which is defined as follows: $w = Dg$ if and only if $\Delta w = 0$ in $\Omega$ and $w = g$ on $\Gamma$; see, e.g., \cite[Chapter 2]{LioMag68book} for
the existence and continuity of such a map. The Dirichlet Laplacian $\Delta_D$ is defined as an unbounded (negative self-adjoint) operator on $L^2(\Omega)$ by $\dom(\Delta_D) \triangleq \{ w_0 \in H^1_0(\Omega) : \Delta w \in L^2(\Omega)\} = H^2(\Omega) \cap H^1_0(\Omega)$ and $\Delta_D w = \Delta w$ for $w \in \dom(\Delta_D)$; it is negative self-adjoint and also naturally extends to isomorphisms from $H^1_0(\Omega) = \dom((-\Delta_D)^{1/2})$ onto $H^{-1}(\Omega)$\footnote{Strictly speaking, here $H^{-1}(\Omega)$ is seen as a space of \emph{antilinear} forms.} and from $L^2(\Omega)$ onto $\dom(\Delta_D)^\ast$.
With this in mind, we see that,
given $g \in H^{1/2}(\Gamma)$, $w \in H^1(\Omega)$ satisfies \cref{eq:dirichlet-pb} if and only if $w - Dg \in H^1_0(\Omega)$  and
$-h^2 \Delta_D(w - h^{-2}Dg) + 2\i h w - w = 0$ in $H^{-1}(\Omega)$. On the other hand, since the operator $-h^2 \Delta_D$ is positive, $1 - 2 \i h \not \in \R$ lies in its resolvent set, and we may therefore let $w \triangleq -(-h^2 \Delta_D + 2\i h - 1)^{-1}\Delta_DDg$. The candidate $w$ is \emph{a priori} defined in $L^2(\Omega)$; following the above observations, it is immediate to check that $w$ belongs to $ H^1(\Omega)$ and is indeed the unique solution to \cref{eq:dirichlet-pb}.
\end{proof}
Next, we give estimates for solutions $w$ in \cref{lem:dirichlet-pb} that are uniform in the small parameter $h$.
\begin{prop}[Dirichlet-to-Neumann bounds]
\label{prop:diri-neu}
If $g \in H^1(\Gamma)$ then the corresponding solution $w$ to \cref{eq:dirichlet-pb}, as given by \cref{lem:dirichlet-pb}, satisfies $\partial_\nu w \in L^2(\Gamma)$. In fact, there exists $K > 0$ such that, 
\begin{equation}
\label{eq:diri-to-neu}
\|w\|_{H^1_h(\Omega)} + h \|\partial_\nu w\|_{L^2(\Gamma)} 
  \leq K \|g\|_{H^1_h(\Gamma)}, \quad g \in H^1(\Gamma), \quad 0 < h \leq 1.
\end{equation}
\end{prop}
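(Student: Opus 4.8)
The plan is to first prove \cref{eq:diri-to-neu} under the additional assumption $g \in H^{3/2}(\Gamma)$ — a dense subspace of $H^1(\Gamma)$ — and then obtain the general case, including the assertion $\partial_\nu w \in L^2(\Gamma)$, by a density argument. If $g \in H^{3/2}(\Gamma)$, then the solution $w$ provided by \cref{lem:dirichlet-pb} satisfies $-h^2\Delta w = (1 - 2\i h)w \in L^2(\Omega)$ with $w|_\Gamma \in H^{3/2}(\Gamma)$, so elliptic regularity for the Dirichlet problem (e.g.\ \cite[Chapter 2]{LioMag68book}) gives $w \in H^2(\Omega)$; in particular $\partial_\nu w \in H^{1/2}(\Gamma) \subset L^2(\Gamma)$ and the Rellich identity \cref{eq:rellich} is applicable to $u = w$.

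\textbf{The a priori estimate.} I would combine two ingredients. First, an energy identity: multiplying \cref{eq:dirichlet-pde} by $\overline w$, integrating over $\Omega$ and using Green's formula yields
\[
(1 - 2\i h)\|w\|^2_{L^2(\Omega)} - h^2\|\nabla w\|^2_{L^2(\Omega)} = -h^2\int_\Gamma (\partial_\nu w)\,\overline g\,\d\sigma .
\]
Taking the imaginary part gives $2h\|w\|^2_{L^2(\Omega)} \le h^2\|\partial_\nu w\|_{L^2(\Gamma)}\|g\|_{L^2(\Gamma)}$, and then the real part gives, for $0 < h \le 1$,
\[
\|w\|^2_{H^1_h(\Omega)} \le 2h\,\|\partial_\nu w\|_{L^2(\Gamma)}\,\|g\|_{L^2(\Gamma)} .
\]
Second, I would apply \cref{eq:rellich} with a vector field $q \in \C^1(\overline\Omega)^d$ such that $q = \nu$ on the whole of $\Gamma$ (such $q$ exists since $\Gamma$ is smooth: extend the unit outward normal and cut off). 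On $\Gamma$ one has $q \cdot \nu = 1$, $q \cdot \nabla w = \partial_\nu w$ and $|\nabla w|^2 = |\partial_\nu w|^2 + |\nabla_\Gamma w|^2$, so the boundary term collapses to $h^2\|\partial_\nu w\|^2_{L^2(\Gamma)} + \|g\|^2_{L^2(\Gamma)} - h^2\|\nabla_\Gamma g\|^2_{L^2(\Gamma)}$; in the interior integrals I would substitute the equation in the form $h^2\Delta w + w = 2\i h w$, so that the multiplier term becomes $2\re(\overline{q\cdot\nabla w}\cdot 2\i h w)$, pointwise bounded by $4h\,|q|\,|\nabla w|\,|w|$ and hence integrating to at most a constant times $\|w\|^2_{H^1_h(\Omega)}$. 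Every interior contribution is then $\le C_\Omega\|w\|^2_{H^1_h(\Omega)}$ with $C_\Omega$ independent of $h$, whence
\[
h^2\|\partial_\nu w\|^2_{L^2(\Gamma)} \le C_\Omega\,\|w\|^2_{H^1_h(\Omega)} + h^2\|\nabla_\Gamma g\|^2_{L^2(\Gamma)} .
\]
Writing $Y \triangleq h\|\partial_\nu w\|_{L^2(\Gamma)}$ and inserting the first displayed estimate into the last one yields the scalar inequality $Y^2 \le 2C_\Omega\,Y\,\|g\|_{L^2(\Gamma)} + h^2\|\nabla_\Gamma g\|^2_{L^2(\Gamma)}$, from which $Y \le (2C_\Omega + 1)\|g\|_{H^1_h(\Gamma)}$; feeding this back into the first displayed estimate controls $\|w\|_{H^1_h(\Omega)}$ as well. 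This is \cref{eq:diri-to-neu} for $g \in H^{3/2}(\Gamma)$.

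\textbf{Density.} For a general $g \in H^1(\Gamma)$, pick $g_n \in H^{3/2}(\Gamma)$ with $g_n \to g$ in $H^1(\Gamma)$ and let $w_n \in H^1(\Omega)$ be the corresponding solutions of \cref{eq:dirichlet-pb}. Since $w_n - w_m$ solves \cref{eq:dirichlet-pb} with boundary datum $g_n - g_m$, the a priori estimate applied to $w_n - w_m$ shows that $\{w_n\}$ is Cauchy in $H^1(\Omega)$ and $\{\partial_\nu w_n\}$ is Cauchy in $L^2(\Gamma)$; the $H^1$-limit solves \cref{eq:dirichlet-pb} with datum $g$, so it equals $w$ by the uniqueness part of \cref{lem:dirichlet-pb}. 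Since $\Delta w_n = h^{-2}(1-2\i h)w_n \to \Delta w$ in $L^2(\Omega)$ and the weak normal trace is continuous from $\{v \in H^1(\Omega) : \Delta v \in L^2(\Omega)\}$ into $H^{-1/2}(\Gamma)$ (cf.\ \cref{eq:weak-norm}), the $L^2(\Gamma)$-limit of $\partial_\nu w_n$ must coincide with $\partial_\nu w$; hence $\partial_\nu w \in L^2(\Gamma)$, and letting $n \to \infty$ in the estimate for $w_n$ gives \cref{eq:diri-to-neu}.

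\textbf{Main obstacle.} The technical heart is the Rellich bookkeeping in the a priori estimate: one must verify that \emph{every} interior term produced by \cref{eq:rellich} is dominated by the semiclassical energy $\|w\|^2_{H^1_h(\Omega)}$ with an $h$-independent constant on $(0,1]$. This is precisely where the complex shift $2\i h$ in \cref{eq:dirichlet-pde} is used: it turns the otherwise problematic multiplier term $2\re(\overline{q\cdot\nabla w}(h^2\Delta w + w))$ into a quantity of size $O\bigl(h\,\|\nabla w\|_{L^2(\Omega)}\|w\|_{L^2(\Omega)}\bigr) = O\bigl(\|w\|^2_{H^1_h(\Omega)}\bigr)$, so that the Neumann trace on the boundary can be absorbed. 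The energy identity, the quadratic bootstrap and the density step are routine.
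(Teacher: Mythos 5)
Your proof is correct and takes essentially the same route as the paper: the same complex-shifted energy identity with real and imaginary parts, the same Rellich identity with $q\cdot\nu = 1$ on $\Gamma$, and the same absorption argument (yours packaged as a quadratic bootstrap in $Y = h\|\partial_\nu w\|_{L^2(\Gamma)}$, the paper's as Young's inequality with a small parameter $\eps$). The density step you use to justify $\partial_\nu w \in L^2(\Gamma)$ is precisely the alternative the paper mentions in lieu of citing Ne\v{c}as' regularity result.
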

\begin{proof}
The property that $\partial_\nu w$ (\emph{a priori} defined in $H^{-1/2}(\Gamma)$ as in \cref{eq:weak-norm}) belongs to $L^2(\Gamma)$ when $g \in H^1(\Gamma)$ is already contained in \cite[Theorem 1.1, Chapter 5]{Nec12book};  alternatively we may establish \cref{eq:diri-to-neu} for sufficiently smooth functions first and recover boundary regularity via density arguments.
Now, it follows from \cref{eq:dirichlet-pde} that
\begin{equation}
\label{eq:pairing-w}
\int_\Omega h^2  |\nabla w|^2 \, \d x +(2 \i h -1)  |w|^2 \, \d x = h^2 \int_\Gamma \partial_\nu w \overline{g} \, \d \sigma, \quad g \in H^1(\Gamma), \quad h > 0.
\end{equation}
By successively taking the imaginary and real parts of \cref{eq:pairing-w} and using Young's inequality, we deduce that, for some constant $K > 0$,
\begin{equation}
\label{eq:pairing-young}
\int_\Omega h^2 |\nabla w|^2 + |w|^2 \, \d x \leq \frac{\eps h^2K}{2} \int_{\Gamma} |\partial_\nu w|^2 \, \d \sigma + \frac{K}{2\eps} \int_\Gamma |g|^2 \, \d \sigma, \quad \eps > 0, \quad g \in H^1(\Gamma), \quad 0 < h \leq 1.
\end{equation}
Similarly as in the proof of \cref{lem:a-priori-neumann}, we may use the multiplier identity \cref{eq:rellich} from \cref{lem:rellich} with a vector field $q$ that satisfies $q \cdot \nu = 1$ on $\Gamma$. After a couple of Cauchy--Schwarz and Young inequalities we then see that there exists a constant $K' > 0$ such that
\begin{equation}
\label{eq:diff-diri-to-neu}
h^2 \int_\Gamma |\partial_\nu w|^2 \, \d \sigma
\leq K \mleft( \int_\Omega |w|^2 + h^2 |\nabla w|^2 \, \d x + \int_{\Gamma} |g|^2 + h^2|\nabla_\Gamma g|^2 \, \d \sigma  \mright), \quad g \in H^1(\Gamma), \quad h > 0.
\end{equation}
(Here $w$ satisfies \cref{eq:rellich-trace} with $f$ replaced by $-2\i h w$ and $\Gamma_0$ replaced by $\Gamma$.) To complete the proof, simply make the sum of \cref{eq:pairing-young,eq:diff-diri-to-neu} with $\eps$ chosen sufficiently small.
\end{proof}

\begin{rem}
In \cref{eq:dirichlet-pb} the additional ``damping'' term $2 \i h w$ allows us to stay away from the spectrum of the Dirichlet Laplacian, so that $w$ is uniquely determined by its boundary data $g$. For the pure Helmholtz equation $(-h^2 \Delta -1)w = 0$, Dirichlet-to-Neumann bounds such as \cref{eq:diri-to-neu} must take the form of \emph{a priori estimates}; see for instance \cite{Spe14,BasSpe15}. 
\end{rem}



We are ready to introduce our splitting of quasimodes $u$. Let $u_1$ be the unique solution to \cref{eq:dirichlet-pb} with Dirichlet data $g  = u|_{\Gamma}$, and let $u_2 \triangleq u - u_1$.
\begin{coro}
\label{coro:neu-dir-qm}
For $o(h^{{1 + \delta}})$-quasimodes $u$, 
\begin{equation}
\label{eq:o-u1}
\|u_1\|_{H^1_h(\Omega)}+ h \|\partial_\nu u_1\|_{L^2(\Gamma)}  = o(1), \quad h \to 0.
\end{equation}
\end{coro}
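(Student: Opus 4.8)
The plan is to read off \cref{eq:o-u1} directly from the Dirichlet-to-Neumann bound of \cref{prop:diri-neu}, once we observe that the Dirichlet datum defining $u_1$ is supported on $\Gamma_0$ and that we already control its $H^1_h(\Gamma_0)$-norm through the preliminary quasimode estimates.

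First I would record that $g \triangleq u|_\Gamma$ is an admissible datum for \cref{prop:diri-neu}: since $u \in \dom(A)$, \cref{prop:carac-dom-A} gives $u|_{\Gamma_0} \in H^2(\Gamma_0)$ and $u|_{\Gamma_1} = 0$, and because $\overline{\Gamma_0} \cap \overline{\Gamma_1} = \emptyset$ the trace $g$ belongs to $H^1(\Gamma)$, is simply the zero-extension of $u|_{\Gamma_0}$ across $\Gamma_1$, and satisfies
\[
\|g\|^2_{H^1_h(\Gamma)} = \int_{\Gamma_0} h^2|\nabla_\Gamma u|^2 + |u|^2 \, \d\sigma = \|u\|^2_{H^1_h(\Gamma_0)}.
\]
By uniqueness in \cref{lem:dirichlet-pb}, $u_1$ is precisely the solution of \cref{eq:dirichlet-pb} with datum $g$, so for $h$ small enough that $h \leq 1$, \cref{prop:diri-neu} provides a constant $K > 0$, independent of the quasimode, with
\[
\|u_1\|_{H^1_h(\Omega)} + h\|\partial_\nu u_1\|_{L^2(\Gamma)} \leq K\|u\|_{H^1_h(\Gamma_0)}.
\]

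It then remains to show $\|u\|_{H^1_h(\Gamma_0)} = o(1)$ as $h \to 0$. Writing $\|u\|^2_{H^1_h(\Gamma_0)} = h^2\|\nabla_\Gamma u\|^2_{L^2(\Gamma_0)^{d-1}} + \|u\|^2_{L^2(\Gamma_0)}$, I would bound the first term by $o(h^\delta + h^{(2+\delta)/2})$ using \cref{lem:quasim-H1-gamma} and the second by $o(h^\delta)$ using \cref{lem:V-dissip-h}; since $\delta \geq 0$, both contributions are $o(1)$ — in fact one gets the quantitative rate $\|u\|_{H^1_h(\Gamma_0)} = o(h^{\delta/2} + h^{(2+\delta)/4})$ — and plugging this into the previous display yields \cref{eq:o-u1}.

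There is no real obstacle here: the corollary is a direct consequence of \cref{prop:diri-neu} together with \cref{lem:V-dissip-h,lem:quasim-H1-gamma}. The only point that requires a moment's care is the identity $\|g\|_{H^1_h(\Gamma)} = \|u\|_{H^1_h(\Gamma_0)}$, which relies on the geometric separation $\overline{\Gamma_0} \cap \overline{\Gamma_1} = \emptyset$ so that zero-extending $u|_{\Gamma_0}$ across $\Gamma_1$ neither destroys $H^1$-regularity nor introduces any cross term between the two components of $\Gamma$.
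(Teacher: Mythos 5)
Your proof is correct and follows essentially the same route as the paper: combine \cref{prop:diri-neu} with the boundary trace estimates from \cref{lem:V-dissip-h,lem:quasim-H1-gamma}, using the geometric separation $\overline{\Gamma_0}\cap\overline{\Gamma_1}=\emptyset$ and $u|_{\Gamma_1}=0$ to identify $\|g\|_{H^1_h(\Gamma)}$ with $\|u\|_{H^1_h(\Gamma_0)}$. The paper states this more tersely as an "immediate consequence"; your write-up simply spells out the intermediate steps, including the explicit invocation of \cref{lem:V-dissip-h} for the $L^2(\Gamma_0)$ part of the semiclassical $H^1$-norm.
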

\begin{proof}
This is an immediate consequence of \cref{lem:quasim-H1-gamma,prop:diri-neu}. Here we also use that $\|u\|_{H^1_h(\Gamma)} = \smash\|u\|_{H^1_h(\Gamma_0)}$ \smash{since $\overline{\Gamma_0} \cap \overline{\Gamma_1} = \emptyset$ and $u|_{\Gamma_1} = 0$.}
\end{proof}
Now we deal with the remaining part $u_2$. This is the final step towards the conclusion that there are actually no $o(h^3)$-quasimodes under the geometric control condition.
\begin{prop}[Observation from the boundary]
\label{prop:boundary-qm}
Suppose that \cref{as:dyn} holds.
For $o(h^{3})$-quasimodes $u$, 
\begin{equation}
\|u_2\|_{H^1_h(\Omega)}  = o(1), \quad h \to 0.
\end{equation}
\end{prop}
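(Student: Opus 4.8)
The plan is to use the splitting $u = u_1 + u_2$ to reduce the estimate to the classical boundary observability inequality for the wave equation with \emph{homogeneous} Dirichlet data, which is then invoked as a black box. First I would write down the equation satisfied by $u_2 = u - u_1$. Since $(-h^2\Delta - 1)u = f$ in $\Omega$ and, by \cref{lem:dirichlet-pb}, $(-h^2\Delta + 2\i h - 1)u_1 = 0$, i.e.\ $(-h^2\Delta - 1)u_1 = -2\i h u_1$, subtraction gives
\[
(-h^2\Delta - 1)u_2 = f + 2\i h u_1 \quad\text{in }\Omega, \qquad u_2|_\Gamma = u|_\Gamma - g = 0.
\]
The right-hand side is small: for $o(h^3)$-quasimodes $\|f\|_{L^2(\Omega)} = o(h^3)$ by \cref{eq:base-qm}, while \cref{coro:neu-dir-qm} gives $\|u_1\|_{H^1_h(\Omega)} = o(1)$, hence $\|u_1\|_{L^2(\Omega)} = o(1)$ and therefore $\|f + 2\i h u_1\|_{L^2(\Omega)} = o(h)$ as $h \to 0$.

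Next I would control the normal trace of $u_2$ on $\Gamma_0$. Since $\partial_\nu u_2 = \partial_\nu u - \partial_\nu u_1$ there, combining the improved Neumann estimate $h\|\partial_\nu u\|_{L^2(\Gamma_0)} = o(1)$ from \cref{prop:improved-neumann} with the bound $h\|\partial_\nu u_1\|_{L^2(\Gamma)} = o(1)$ from \cref{coro:neu-dir-qm} yields $h\|\partial_\nu u_2\|_{L^2(\Gamma_0)} = o(1)$ as $h \to 0$. Now comes the key input, where \cref{as:dyn} enters: the Bardos--Lebeau--Rauch boundary observability estimate for the Dirichlet wave equation \cite{BarLeb92}. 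Applying it to $v(t,x) = e^{\i t/h}w(x)$, which solves $\partial_t^2 v - \Delta v = h^{-2}e^{\i t/h}(-h^2\Delta - 1)w$ with $v|_\Gamma = 0$, together with the standard energy estimate for the inhomogeneous equation and the hidden regularity of boundary traces, one obtains the following stationary form: under the geometric control condition for $\Gamma_0$ and the finite-order contact condition \cref{ass:finite-contact}, there exist $C, h_0 > 0$ such that
\[
\|w\|_{H^1_h(\Omega)} \leq C\Bigl( h\|\partial_\nu w\|_{L^2(\Gamma_0)} + h^{-1}\|(-h^2\Delta - 1)w\|_{L^2(\Omega)} \Bigr)
\]
for all $0 < h \leq h_0$ and all $w \in H^2(\Omega) \cap H^1_0(\Omega)$. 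Since $g = u|_\Gamma \in H^{3/2}(\Gamma)$, elliptic regularity forces $u_1 \in H^2(\Omega)$, so $u_2 = u - u_1 \in H^2(\Omega) \cap H^1_0(\Omega)$ is admissible (alternatively one argues by density on smooth quasimodes). Applying the estimate to $w = u_2$ and inserting the two previous bounds yields $\|u_2\|_{H^1_h(\Omega)} \leq C(o(1) + h^{-1}o(h)) = o(1)$, which is the claim.

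The conceptual heart of the argument — and the reason for introducing the decoupling $u = u_1 + u_2$ — is that $u_2$ carries \emph{homogeneous} Dirichlet data, so that one never has to analyze propagation of singularities under the hyperbolic boundary condition \cref{eq:HBC}; the entire effect of that boundary condition has been pushed into $u_1$, which was already handled by the Dirichlet-to-Neumann bounds of \cref{sec:decoupling}. The only genuine care needed is bookkeeping of the powers of $h$ in the observability estimate, and checking that its two hypotheses match exactly the two components of \cref{as:dyn}; the factor $h^{-1}$ in front of the source term is harmless here precisely because $f + 2\i h u_1$ is $o(h)$ rather than merely $o(1)$, which is where the restriction to $o(h^3)$-quasimodes (as opposed to general $o(h^{1+\delta})$-quasimodes) is essential.
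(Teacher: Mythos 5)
Your proof is correct and follows essentially the same route as the paper: the equation and homogeneous Dirichlet data for $u_2$, the $o(h)$-bound on the source $f + 2\i h u_1$, the $o(1)$-bound on $h\|\partial_\nu u_2\|_{L^2(\Gamma_0)}$ via \cref{prop:improved-neumann} and \cref{coro:neu-dir-qm}, and the conclusion via the stationary (Hautus) form of the Bardos--Lebeau--Rauch observability inequality. The paper packages the passage from the time-dependent observability inequality to the semiclassical resolvent estimate as a separate lemma (\cref{lem:hautus}) proved through the abstract Hautus characterization of exact observability for skew-adjoint generators (\cite[Theorem~6.6.1]{TucWei09book}), rather than through the more informal ``insert $v=e^{\i t/h}w(x)$'' sketch you give; that step deserves either a precise citation or a careful argument (cutting off in time, admissibility of the Neumann trace), since the naive time-harmonic ansatz is not a finite-energy solution. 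The membership $u_2\in\dom(\Delta_D)$ can also be obtained more directly: $u\in\dom(A)\subset H^2(\Omega)$ and $\Delta u_1=h^{-2}(2\i h-1)u_1\in L^2(\Omega)$, so $u_2\in H^1_0(\Omega)$ with $\Delta u_2\in L^2(\Omega)$, no extra elliptic regularity for $u_1$ is needed.
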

\begin{proof}
Under the geometric control condition and by virtue of \cite[Theorem 3.8]{BarLeb92}, the wave equation with homogeneous Dirichlet boundary condition is exactly observable through the Neumann trace on $\Gamma_0$.
\Cref{lem:hautus} expresses this fact
in resolvent form as a \emph{Hautus test} and yields $M,m >0$ such that
\begin{equation}
\label{eq:hautus-corp}
\|w\|_{H^1_h(\Omega)} \leq h^{-1} M \|(-h^2 \Delta - 1)w\|_{L^2(\Omega)} +  mh  \|\partial_\nu w\|_{L^2(\Gamma_0)}, \quad w \in \dom(\Delta_D), \quad h > 0.
\end{equation}
(Here $\Delta_D$ denotes the Dirichlet Laplacian, introduced in the proof of \cref{lem:dirichlet-pb}.) By construction $u_2 = u - u_1$ solves
\begin{subequations}
\begin{align}
&(-h^2 \Delta - 1)u_2 = f + 2 \i h u_1&&\mbox{in}~\Omega, \\
&u_2 = 0&&\mbox{on}~\Gamma.
\end{align}
\end{subequations}
In particular $u_2 \in \dom(\Delta_D)$ and \cref{eq:hautus-corp} gives, with the triangle inequality,
\begin{equation}
 \|u_2\|_{H^1_h(\Omega)} \leq h^{-1}M  \mleft( \|f\|_{L^2(\Omega)} + 2 h \|u_1\|_{L^2(\Omega)} \mright) + mh \mleft(\|\partial_\nu u\|_{L^2(\Gamma_0)} + \|\partial_\nu u_1\|_{L^2(\Gamma_0)}\mright)
\end{equation}
Thus the result follows from
\cref{prop:improved-neumann,coro:neu-dir-qm}.
\end{proof}
\begin{coro}
\label{coro:no-qm}
Under \cref{as:dyn} there are in fact no $o(h^3)$-quasimodes.
\end{coro}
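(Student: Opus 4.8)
The plan is simply to assemble the estimates already proved and derive a contradiction. First I would observe that a sequence of $o(h^3)$-quasimodes is exactly the case $\delta = 2$ of \cref{def:qm}, so every preceding result — in particular \cref{lem:V-dissip-h}, \cref{prop:improved-neumann}, \cref{coro:neu-dir-qm} and \cref{prop:boundary-qm} — is applicable. Assume, for contradiction, that such a sequence $u$ (with $h = h_n \to 0$) exists.

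Next, I would use the splitting $u = u_1 + u_2$ introduced before \cref{coro:neu-dir-qm}, where $u_1$ is the lifting of the boundary data $u|_\Gamma$ solving the Dirichlet problem \cref{eq:dirichlet-pb} and $u_2 = u - u_1$. Combining $\|u_1\|_{H^1_h(\Omega)} = o(1)$ from \cref{coro:neu-dir-qm} (applied with $\delta = 2$) with $\|u_2\|_{H^1_h(\Omega)} = o(1)$ from \cref{prop:boundary-qm}, the triangle inequality yields $\|u\|_{H^1_h(\Omega)} = o(1)$ as $h \to 0$. Since the semiclassical norm $\|\cdot\|_{H^1_h(\Omega)}$ dominates the $L^2(\Omega)$-norm, this forces $\|u\|_{L^2(\Omega)} = o(1)$. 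On the other hand, \cref{lem:V-dissip-h} already gives $\|u\|_{L^2(\Gamma_0)} = o(h^{\delta/2}) = o(h) = o(1)$ for $o(h^3)$-quasimodes.

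Finally, I would confront these two bounds with the normalization $\|u\|^2_{L^2(\Omega)} + \|u\|^2_{L^2(\Gamma_0)} = 1$ recorded in \cref{eq:base-qm}: the left-hand side tends to $0$ while the right-hand side is identically equal to $1$, which is absurd. Hence no $o(h^3)$-quasimodes can exist under \cref{as:dyn}, which is the claim.

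There is no genuine obstacle at this stage; the corollary is merely the formal conclusion of the analysis carried out in \cref{prop:HF}, \cref{prop:LF}, the decoupling argument of \cref{sec:decoupling}, and \cref{prop:boundary-qm}. The only points requiring a little care are bookkeeping the value $\delta = 2$ so that all the cited statements apply verbatim, and making sure that the auxiliary estimates on $u_1$ and on the Neumann trace on $\Gamma_0$ are invoked in their full $H^1_h$-strength (not merely in $L^2$), so that the triangle-inequality step indeed produces $\|u\|_{L^2(\Omega)} = o(1)$.
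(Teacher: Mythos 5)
Your proof is correct and matches the paper's argument essentially verbatim: both combine \cref{coro:neu-dir-qm} and \cref{prop:boundary-qm} to get $\|u\|_{L^2(\Omega)} = o(1)$, invoke \cref{lem:V-dissip-h} for $\|u\|_{L^2(\Gamma_0)} = o(1)$, and contradict the normalization $\|u\|_H = 1$ from \cref{eq:base-qm}.
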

\begin{proof}
By \cref{coro:neu-dir-qm,prop:boundary-qm}, $o(h^3)$-quasimodes $u$ must satisfy $\|u\|_{L^2(\Omega)} = o(1)$ as $h \to 0$. Since $\|u\|_{L^2(\Gamma_0)}
 \to 0$ as well, this  contradicts $\|u\|_H = 1$.
\end{proof}

Our study of quasimodes is now complete, and at last we can write down the proof of \cref{prop:res}.
\begin{proof}[Proof of \cref{prop:res}]
Observe first that 
\begin{equation}
\|P^{-1}(\i \lambda)\|_{\L(H)} = \|P^{-1}(- \i \lambda)^\ast\|_{\L(H)} = \|P^{-1}(- \i \lambda)\|_{\L(H)}, \quad \lambda \in \R, \quad \lambda \not = 0.
\end{equation}
Thus it suffices to prove that \cref{eq:second-order-res-estimate} holds  for $\lambda \to + \infty$. Arguing by contradiction, suppose that for all integers $n \geq 1$ there exists a real $\lambda_n > n$ and a vector $(f_n, e_n) \in H$ such that
\begin{equation}
\lambda_n^{-1} \|P(\i \lambda_n)^{-1}(f_n, e_n)\|_H > n \|(f_n, e_n)\|_{H}.
\end{equation}
Letting $v_n \triangleq P(\i \lambda_n)^{-1}(f_n, e_n) \in \dom(A) \setminus \{0\}$ and $u_n \triangleq v_n / \|v_n\|_H$, we have $\|u_n\|_H = 1$ and
\begin{equation}
\label{eq:qm-lambda}
\|(-A + \i \lambda_n BB^\ast - \lambda_n^2)u_n\|_H < \frac{1}{n \lambda_n}, \quad n \geq 1.
\end{equation}
Writing $h_n \triangleq \lambda_n^{-1}$ and multiplying \cref{eq:qm-lambda} by $h_n^2$ we immediately deduce that $\{(h_n, u_n)\}_{n \geq 1}$ is a sequence of normalized $o(h_n^3)$-quasimodes in the sense of \cref{def:qm}, which is impossible by \cref{coro:no-qm}.
\end{proof}

\subsection{Focusing modes in the disk and optimality of the decay rate}

\label{sec:focusing}

Now we are in a position to prove \cref{th:intro-sharp}, which guarantees that the energy decay rate of \cref{th:intro} is sharp.
To do so we investigate the \emph{spectrum} $\sigma(P)$ of the quadratic operator pencil $P$ in the case where 
$\Omega$ is the unit disk $ \mathbb{D} \triangleq \{x \in \R^2 : |x| < 1\}$, with $\Gamma = \Gamma_0 = \partial \mathbb{D}$. By spectrum we mean the set of complex numbers $z$ such that $P(z)$ does not have an inverse in $\L(H)$; as we have seen in \cref{sec:sg-generation}, it is made of \emph{eigenvalues} $z$, for which the equation $P(z)u = 0$ has nontrivial solutions $u$ which we refer to as \emph{eigenvectors}. In \cref{prop:roots-bessel} below, we exhibit a sequence of eigenvalues $\{z_k\}_{k\gg1}$ such that
\begin{equation}
\im z_k \to + \infty, \quad \re z_k < 0, \quad \re z_k = O((\im z_k)^{-2}), \quad k \to + \infty.
\end{equation}
This will show that the resolvent estimate of \cref{prop:res} cannot be improved (see \cref{coro:qm-bessel}) and yield \cref{th:intro-sharp} as a consequence of semigroup theory.

Analyzing the spectrum of $P$ amounts to investigating a quadratic eigenvalue problem for the Laplacian with a special, $z$-dependent boundary condition. In the case of the disk, we shall therefore make heavy use of \emph{Bessel functions} and their properties; see the monograph \cite{Wat95book} for extensive details.
Recall that the {Bessel function} $J_n$ of the first kind and of order $n \in \N$ is given by
\begin{equation}
J_n(z) = \sum_{m = 0}^{+\infty} \frac{(-1)^m}{m! (m + k)!} \mleft( \frac{z}{2} \mright)^{2m + n}, \quad z \in \Co.\end{equation}
Each $J_n$ is an entire function
that solves the Bessel differential equation
\begin{equation}
\label{eq:bessel-ode}
z^2 \frac{\d^2 J_n}{\d z^2}  + z \frac{\d J_n}{\d z} + (z^2 - n^2) J_n = 0.
\end{equation}
As we will see in a moment, \cref{eq:bessel-ode} typically arises when expressing the Laplacian in \emph{polar} coordinates $(r, \theta)$:
\begin{equation}
\label{eq:laplace-polar}
\Delta = \ddl{}{r} + \frac{1}{r} \dl{}{r} + \frac{1}{r^2} \ddl{}{\theta}.
\end{equation}
Note that the Laplace--Beltrami operator  $\Delta_\Gamma$ and the normal derivative $\partial_\nu$ on $\Gamma = \partial \mathbb{D}$  simply read as 
\begin{equation}
\Delta_\Gamma = \ddl{}{\theta}, \quad \dl{}{\nu} = \dl{}{r}.
\end{equation}
Here and in the sequel, with a slight abuse in notation, we write polar coordinates freely: for instance, for a function $u$ of $\overline{\mathbb{D}}$, $u(x_1, x_2) = u(r, \theta)$, $r = (x_1^2 + x_2^2)^{1/2}$, $0 \leq r \leq 1$, $x_1 = r \cos \theta$, $x_2 = r \sin \theta$, $\theta \in \R/2\pi$. That is, we omit change of variable maps.
\begin{lemma}
\label{lem:bessel-eg}
Let $n \in \N$ and $\lambda \in \Co$. If $\lambda$ is nonzero and satisfies
\begin{equation}
\label{eq:roots-bessel}
(n^2 +\i \lambda - \lambda^2) J_n(\lambda)  + \lambda J'_n(\lambda) = 0,
\end{equation}
then $\i \lambda \in \sigma(P)$\footnote{Here we let $\lambda$ be complex, in contrast with the previous sections.} and a corresponding eigenvalue $u \in \dom(A)$ is given by
\begin{equation}
\label{eq:bessel-eigenvalue}
u(r, \theta) = J_n(\lambda r) e^{\i n \theta}, \quad 0 \leq r \leq 1, \quad \theta \in \R \setminus 2\pi.
\end{equation}
\end{lemma}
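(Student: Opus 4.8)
The plan is to verify by a direct computation that the function $u$ defined in \cref{eq:bessel-eigenvalue} is a nonzero element of $\dom(A)$ with $P(\i\lambda)u = 0$; this gives $\i\lambda \in \sigma(P)$ at once, since a nontrivial kernel prevents $P(\i\lambda)$ from having a bounded inverse on $H$ (if $T \in \L(H)$ inverted $P(\i\lambda)$ then $u = TP(\i\lambda)u = 0$, a contradiction).

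First I would settle regularity. The power series defining $J_n$ shows that $J_n(z) = z^n g(z^2)$ for some entire function $g$; hence, in Cartesian coordinates, $u(r,\theta) = J_n(\lambda r)e^{\i n \theta} = \lambda^n (x_1 + \i x_2)^n\, g\big(\lambda^2(x_1^2 + x_2^2)\big)$, which is smooth on all of $\R^2$. Since $\Gamma_1 = \emptyset$ here, \cref{prop:carac-dom-A} gives $u \in \C^\infty(\overline{\mathbb{D}}) \subset \dom(A)$, so that $Au$ is given by the explicit formula \cref{eq:A-explicit} and $P(\i\lambda)u = (A + \i\lambda BB^\ast - \lambda^2)u$ may be evaluated componentwise. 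Identifying $u$ with $(u, u|_\Gamma)$ in $H$, one has $P(\i\lambda)u = \big(-\Delta u - \lambda^2 u,\ -\Delta_\Gamma u + \partial_\nu u + \i \lambda u|_\Gamma - \lambda^2 u|_\Gamma\big)$.

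Next I would check the interior component. Using the polar form \cref{eq:laplace-polar} of the Laplacian, $\Delta u = \big(\lambda^2 J_n''(\lambda r) + \tfrac{\lambda}{r} J_n'(\lambda r) - \tfrac{n^2}{r^2}J_n(\lambda r)\big) e^{\i n\theta}$. Substituting the value of $J_n''(\lambda r)$ obtained from the Bessel differential equation \cref{eq:bessel-ode} evaluated at $z = \lambda r$ makes the terms cancel and leaves $\Delta u = -\lambda^2 u$, i.e. $(-\Delta - \lambda^2)u = 0$ in $\mathbb{D}$. Then I would handle the boundary component: at $r = 1$ one has $u|_\Gamma = J_n(\lambda)e^{\i n\theta}$, $\Delta_\Gamma u = \partial_\theta^2 u = -n^2 J_n(\lambda) e^{\i n\theta}$, and $\partial_\nu u = \partial_r u|_{r=1} = \lambda J_n'(\lambda) e^{\i n\theta}$, so the second component of $P(\i\lambda)u$ equals $\big((n^2 + \i\lambda - \lambda^2)J_n(\lambda) + \lambda J_n'(\lambda)\big)e^{\i n\theta}$, which vanishes precisely by the hypothesis \cref{eq:roots-bessel}. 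Finally, $u \neq 0$: since $\lambda \neq 0$ and $J_n \not\equiv 0$ has only isolated zeros, $r \mapsto J_n(\lambda r)$ cannot vanish identically on $[0,1]$.

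The argument presents no genuine difficulty; the only points that deserve a little care are the smoothness of $u$ at the origin, which is why I would record the factorization $J_n(z) = z^n g(z^2)$, and the bookkeeping in the polar-coordinate computation of $\Delta u$, both entirely routine.
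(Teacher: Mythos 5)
Your proof is correct and follows essentially the same route as the paper: verify by direct computation in polar coordinates that $(-\Delta-\lambda^2)u=0$ in $\mathbb{D}$ via the Bessel ODE, then check that \cref{eq:roots-bessel} forces the boundary component of $P(\i\lambda)u$ to vanish. The one genuine improvement in your write-up is that you explicitly justify $u\in\dom(A)$: the paper takes this for granted, whereas you record the factorization $J_n(z)=z^n g(z^2)$ (with $g$ entire) and rewrite $u$ in Cartesian coordinates as $\lambda^n(x_1+\i x_2)^n g\bigl(\lambda^2(x_1^2+x_2^2)\bigr)$, which exhibits smoothness at the origin cleanly and invokes \cref{prop:carac-dom-A}; this is worth having, since a naive reading of $J_n(\lambda r)e^{\i n\theta}$ leaves regularity at $r=0$ unaddressed.
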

\begin{proof}
Since $J_n$ is entire, $J_n(\lambda r)$ cannot vanish for all $0 \leq r \leq 1$ unless $\lambda = 0$; as a result the function $u$ given by \cref{eq:bessel-eigenvalue} is nontrivial. Now, using \cref{eq:laplace-polar} and then \cref{eq:bessel-ode} we see that
\begin{equation}
\label{eq:pol-int}
- \Delta u(r, \theta) = -\frac{1}{r^2}\mleft(\lambda^2 r^2J''_n(\lambda r) + \lambda r J'_n(\lambda r) - n^2 J_n(\lambda r)\mright) e^{\i n \theta} = \lambda^2 J_n(\lambda r)e^{\ n\theta} = \lambda^2 u(r, \theta)
\end{equation}
for all $0 < r < 1$ and $\theta \in \R / 2\pi$. On the other hand,
\begin{equation}
\label{eq:pol-bound}
(-\Delta_\Gamma + \i \lambda + \partial_\nu) u(\theta) = (n^2 + \i \lambda)J_n(\lambda)e^{\i n \theta} + \lambda J'_n(\lambda) e^{\i n\theta} = \lambda^2 J_n(\lambda)e^{\i n \pi} = \lambda^2 u(1, \theta)
\end{equation}
for all $\theta \in \R / 2\pi$ due to \cref{eq:roots-bessel}. Recalling the explicit form of $P$, we immediately deduce from \cref{eq:pol-int,eq:pol-bound} that $P(\i \lambda)u = 0$, as required.
\end{proof}
\begin{rem}
We are not concerned with the multiplicity of eigenvalues or the  description of all eigenvectors.
\end{rem}
Since we look for eigenvalues $z$ of $P$ in the high-frequency regime, we shall investigate complex roots $\lambda$ of \cref{eq:roots-bessel} with large real part and small imaginary part. For fixed $n \in \N$, after dividing \cref{eq:roots-bessel} by $n^2 + \i \lambda - \lambda^2$, we expect that roots satisfy
\begin{equation}
J_n(\lambda) \simeq 0, \quad \re \lambda \gg 1.
\end{equation}
This suggests seeking our $\lambda$ as  perturbations of \emph{real} large roots $\alpha$ of
\begin{equation}
\label{eq:bessel-hom}
J_n(\alpha) = 0,
\end{equation}
which, in terms of Laplacian eigenvalues, corresponds to the well-known case of homogeneous Dirichlet boundary condition. Positive roots of \cref{eq:bessel-hom} are discrete and denoted by $\alpha_{nk}$, $n,k \in \N$, where the sequence is increasing in $k$ for fixed $n$, and corresponding eigenmodes with zero Dirichlet data are spanned by
\begin{equation}
u^{\pm}_{nk}(r, \theta) = J_n(\alpha_{nk}r)e^{\pm \i n \theta}, \quad 0 \leq r \leq 1, \quad  \theta \in \R \setminus 2\pi.
\end{equation}
In particular, for fixed $n$,  the limit $k \to + \infty$ yields \emph{focusing modes} that are highly oscillating in the radial direction and concentrate around the origin: see \cite{NguGre13,GreNgu13review} for an in-depth investigation of their localization properties. Our subsequent analysis will shed some light on how the associated eigenvalues behave upon
replacing the homogeneous Dirichlet boundary condition with our dissipative dynamic boundary condition \cref{eq:HBC}. We consider the case $n = 0$ only.

\begin{prop}[Root asymptotics]
\label{prop:roots-bessel}
There exists a sequence $\{\lambda_k\}_{k\gg 1}$ of complex roots of
\begin{equation}
\label{eq:bessel-roots-0}
(\i \lambda - \lambda^2)J_0(\lambda) + \lambda J'_0(\lambda) = 0
\end{equation}
that satisfies
\begin{equation}
\lambda_k = k\pi - \frac{\pi}{4} + \frac{1}{8\mleft( k\pi - \frac{\pi}{4}\mright)} + \frac{k\pi - \frac{\pi}{4}}{1 + \mleft( k \pi - \frac{\pi}{4}\mright)^2} + \frac{\i}{1 + \mleft( k \pi - \frac{\pi}{4}\mright)^2} + O\mleft( \frac{1}{k^3}\mright), \quad k \to + \infty.
\end{equation}

\end{prop}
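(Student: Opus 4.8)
The plan is to realize the roots of \eqref{eq:bessel-roots-0} as small perturbations of the zeros of $J_0$ --- i.e.\ of the Dirichlet eigenvalues of the disk --- and to control the perturbation by a contraction argument. Since $J'_0 = -J_1$, a nonzero $\lambda$ solves \eqref{eq:bessel-roots-0} if and only if it is a zero of the entire function $\Phi(\lambda) \triangleq J'_0(\lambda) - (\lambda - \i)J_0(\lambda)$ (divide by $\lambda$), equivalently $J'_0(\lambda)/J_0(\lambda) = \lambda - \i$ wherever $J_0(\lambda) \neq 0$. The positive zeros $\alpha_{0k}$ of $J_0$ are simple, so $\Phi(\alpha_{0k}) = J'_0(\alpha_{0k}) \neq 0$ and $\Phi$ does \emph{not} vanish at $\alpha_{0k}$ itself. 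One writes
\begin{equation}
\frac{J'_0(\lambda)}{J_0(\lambda)} = \frac{1}{\lambda - \alpha_{0k}} + g_k(\lambda),
\end{equation}
where $g_k$ is holomorphic on $\{|\lambda - \alpha_{0k}| < \rho_0\}$, $\rho_0 > 0$ being a small fixed radius chosen so that this disk contains no other zero of $J_0$. On that disk, $\Phi(\lambda) = 0$ is then equivalent to the fixed-point equation $\lambda = T_k(\lambda)$, where $T_k(\lambda) \triangleq \alpha_{0k} + (\lambda - \i - g_k(\lambda))^{-1}$.

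The crux is a bound on $g_k$ uniform in $k$. Using the large-argument asymptotics of $J_0$ and $J_1$ \cite{Wat95book}, which hold uniformly in a fixed horizontal strip around $\R_+$, together with the elementary identity $\cos(\lambda - \tfrac{\pi}{4}) = (-1)^k \sin(\lambda - k\pi + \tfrac{\pi}{4})$ and the fact that $\alpha_{0k} = k\pi - \tfrac{\pi}{4} + O(k^{-1})$, one checks that $|\cos(\lambda - \tfrac{\pi}{4})|$ is bounded below by a positive constant on the circle $|\lambda - \alpha_{0k}| = \rho_0$ for $k$ large; hence $|J_0(\lambda)| \geq c\,|\lambda|^{-1/2}$ and $|J'_0(\lambda)/J_0(\lambda)| = O(1)$ there, so that by the maximum principle $\sup_{|\lambda - \alpha_{0k}| \leq \rho_0} |g_k(\lambda)| = O(1)$ uniformly in $k$, while Cauchy's estimate gives $|g_k'| = O(1)$ on the half-size disk. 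Since $|\lambda|$ is of order $k$ on this disk, $T_k$ maps $\{|\lambda - \alpha_{0k}| \leq \rho_0/2\}$ into the far smaller disk $\{|\lambda - \alpha_{0k}| \leq 2/\alpha_{0k}\}$ and satisfies $\sup|T_k'| = O(k^{-2})$. Banach's fixed-point theorem then produces, for all large $k$, a unique $\lambda_k$ with $\Phi(\lambda_k) = 0$ and $|\lambda_k - \alpha_{0k}| = O(k^{-1})$; since $\Phi$ does not vanish at $\alpha_{0k}$, this $\lambda_k$ is in fact the unique zero of $\Phi$ in $\{|\lambda - \alpha_{0k}| \leq \rho_0/2\}$.

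To get the expansion, Taylor-expand $J_0$ at $\alpha_{0k}$ and use the Bessel equation $\alpha_{0k}^2 J''_0(\alpha_{0k}) + \alpha_{0k} J'_0(\alpha_{0k}) = 0$ to obtain $g_k(\alpha_{0k}) = J''_0(\alpha_{0k})/(2 J'_0(\alpha_{0k})) = -1/(2\alpha_{0k})$; combined with the Lipschitz bound this gives $g_k(\lambda_k) = O(k^{-1})$. Substituting $\lambda_k = \alpha_{0k} + O(k^{-1})$ into the fixed-point identity,
\begin{equation}
\lambda_k = \alpha_{0k} + \frac{1}{\lambda_k - \i - g_k(\lambda_k)} = \alpha_{0k} + \frac{1}{\alpha_{0k} - \i + O(k^{-1})} = \alpha_{0k} + \frac{1}{\alpha_{0k} - \i} + O(k^{-3}).
\end{equation}
Finally, insert McMahon's classical asymptotics $\alpha_{0k} = \beta_k + \tfrac{1}{8\beta_k} + O(k^{-3})$ with $\beta_k \triangleq k\pi - \tfrac{\pi}{4}$ \cite{Wat95book}, and use $\tfrac{1}{\alpha_{0k} - \i} = \tfrac{1}{\beta_k - \i} + O(k^{-3}) = \tfrac{\beta_k}{1 + \beta_k^2} + \tfrac{\i}{1 + \beta_k^2} + O(k^{-3})$; this yields the announced expansion.

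The main obstacle is the uniform-in-$k$ control of $g_k$ on a fixed-size disk, which is where the complex-variable asymptotics of Bessel functions must be invoked carefully near the zeros of $J_0$; the remainder is routine perturbation theory. One could instead establish the existence of $\lambda_k$ via Rouché's theorem, comparing $\Phi$ on $|\lambda - \alpha_{0k}| = \rho_0$ with the leading trigonometric term of its asymptotic expansion, but the fixed-point formulation is preferable since it also produces the asymptotic expansion of $\lambda_k$ at the same time.
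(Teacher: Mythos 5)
Your proof is correct, but the route is genuinely different from the paper's. The paper establishes existence of $\lambda_k$ near $\alpha_{0k}$ by first proving a uniform lower bound $|J_0(\alpha_{0k}+\eps)| \geq c'/\sqrt{k}$ on $|\eps|=3/4$ via the Hankel expansion (its Step~1), then applies Rouch\'e's theorem comparing $J_0(\alpha_{0k}+\cdot)$ with $J_0(\alpha_{0k}+\cdot) + \frac{1}{\i-\alpha_{0k}-\cdot}J_0'(\alpha_{0k}+\cdot)$ (Step~2), and finally re-enters the Hankel expansion to grind out the asymptotics through careful Taylor work (Step~3). You instead rewrite the equation as the scalar relation $J'_0/J_0 = \lambda - \i$, extract the simple pole of the logarithmic derivative at $\alpha_{0k}$, and run a contraction mapping argument for $T_k(\lambda) = \alpha_{0k} + (\lambda - \i - g_k(\lambda))^{-1}$ on a fixed-size disk. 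This produces existence and the expansion in a single pass, and the clean identity $g_k(\alpha_{0k}) = J''_0(\alpha_{0k})/(2J'_0(\alpha_{0k})) = -1/(2\alpha_{0k})$, obtained from the Bessel ODE, replaces the paper's repeated manipulation of the $p,q,f,g$ Hankel coefficients. The ingredients are equivalent in substance --- both hinge on the uniform bound near the circle away from $\alpha_{0k}$, which in your case enters via the maximum principle and Cauchy estimates for $g_k$ rather than directly into a Rouch\'e comparison --- but the fixed-point packaging is tighter. You correctly note that $\Phi(\alpha_{0k})=J'_0(\alpha_{0k})\neq 0$, which is needed since the passage to the logarithmic-derivative equation requires $J_0\neq 0$. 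The only thing worth spelling out slightly more is that the lower bound on $|\cos(\lambda-\tfrac{\pi}{4})|$ on the circle $|\lambda-\alpha_{0k}|=\rho_0$ ultimately rests on the fact that $\alpha_{0k}$ is (up to $O(1/k)$) a zero of $\cos(\cdot-\tfrac{\pi}{4})$ and $|\sin|$ is bounded below on the annulus $\rho_0/2\leq|\cdot|\leq 2\rho_0$ --- exactly the ingredient the paper uses in its Step~1 --- but you gesture at this correctly.
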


\begin{proof}
Let us first recall from \cite{Wat95book} some further facts regarding Bessel functions. For all $n \in \N$ and $z \in \Co$, we have the bound $|J_n(z)| \leq e^{\im z}$ and the recurrence relation
\begin{equation}
\frac{\d }{\d z} (z^n J_n(z)) = z^n J_{n-1}.
\end{equation}
In particular,  $J'_0 = - J_1$, and both $J_0$ and $J'_0$ remain bounded in any horizontal strip of finite width. Furthermore, $J''_0(z) = - J_1'(z) = - (J_0(z) + J_1(z)/z)$, and thus $J'_0$ is uniformly Lipschitz on, say the region $\{z \in \Co : \re z \geq 1,\ - 1 \leq \im z \leq 1\}$. 
We shall use the Hankel expansion \cite[Section 7.21]{Wat95book}, which reads as
\begin{equation}
\label{eq:hankel}
J_0(z) = \sqrt{\frac{2}{\pi z}}  \mleft ( p(z) \cos \mleft( z - \frac{\pi}{4} \mright) - q(z) \sin \mleft( z - \frac{\pi}{4} \mright)    \mright), \quad - \pi < \arg z < \pi, \quad |z| \to + \infty,
\end{equation}
where we take the principal branch of the square root, and
\begin{equation}
\label{eq:prop-p-q}
p(z) = 1 
+ O\mleft(\frac{1}{|z|^2}\mright), \quad q(z) = - \frac{1}{8z} + O\mleft(\frac{1}{|z|^3}\mright), \quad |z| \to + \infty.
\end{equation}
Similarly,
\begin{equation}
\label{eq:hankel-prime}
J'_0(z) =  - \sqrt{\frac{2}{\pi z}} \mleft( f(z) \sin \mleft ( z - \frac{\pi}{4}\mright)  + g(z) \cos \mleft ( z - \frac{\pi}{4}\mright) \mright), \quad - \pi < \arg z < \pi, \quad |z| \to + \infty,
\end{equation}
where
\begin{equation}
\label{eq:prop-f-g}
f(z) = 1 + O\mleft( \frac{1}{|z|^2} \mright), \quad 
g(z) = \frac{3}{8z} + O\mleft(\frac{1}{|z|^3}\mright), \quad |z| \to + \infty.
\end{equation}
Furthermore, zeroes $\alpha_{0k}$ of $J_0$ satisfy the McMahon expansion \cite[Section 15.52]{Wat95book}
\begin{equation}
\label{eq:mcmahon}
\alpha_{0k} = k \pi - \frac{\pi}{4} + \frac{1}{8\mleft ( k \pi - \frac{\pi}{4} \mright)} + O\mleft( \frac{1}{k^3} \mright), \quad k \to + \infty.
\end{equation}
With that in hand, we may proceed to the proof, which is divided into three steps. In what follows, since we consider the Bessel function $J_0$ only, we drop the subscript indicating the order $n = 0$.

\emph{Step 1:  lower bound away from zeroes.} Fix $\eps \in \Co$ with $|\eps|  \leq 1$.
Then, according to \cref{eq:hankel},
\begin{equation}
\label{eq:lower-bound-start}
H_k(\eps) \triangleq 
\sqrt{\frac{\pi (\alpha_k + \eps)}{2}} J(\alpha_k + \eps) = \cos\mleft ( \alpha_k + \eps - \frac{\pi}{4} \mright)
+ O\mleft ( \frac{1}{k} \mright), \quad k \to + \infty
\end{equation} 
Using \cref{eq:mcmahon} at leading order, we see that
\begin{equation}
\alpha_k + \eps - \frac{\pi}{4} = 
k\pi - \frac{\pi}{2} + \eps + O\mleft(\frac{1}{k}\mright), 
\quad k \to + \infty,
\end{equation}
and thus, with a trigonometric formula,
\begin{equation}
\label{eq:cos-sin-eps}
 \cos\mleft ( \alpha_k + \eps - \frac{\pi}{4} \mright) = (-1)^k \sin \mleft (\eps + O\mleft(\frac{1}k\mright)\mright), \quad k \to + \infty.
\end{equation}
Now recall that the only complex root of $\sin z = 0$ in the open unit disk is $0$. As a result, there surely exists $c > 0$ such that $|\sin z| \geq c$ if $1/2 \leq |z| \leq 1$. In particular, if $|\eps| = 3/4$ (say), then it follows from \cref{eq:cos-sin-eps,eq:lower-bound-start} that, whenever $k$ is sufficiently large,
\begin{equation}
\label{eq:lower-eps}
|H_k(\eps)| = 
\mleft |\sqrt{\frac{\pi (\alpha_k + \eps)}{2}}J(\alpha_k + \eps) \mright | 
\geq \frac{c}{2}.
\end{equation}
\emph{A priori} \cref{eq:lower-eps} holds for integers $k$ greater than some quantity that depends on $\eps$. Let us check that, actually, a uniform lower bound holds.
First,
\begin{equation}
\label{eq:H-k-p}
H'_k(\eps) = 
\frac{\d}{\d \eps} \sqrt{\frac{\pi (\alpha_k + \eps)}{2}} J(\alpha_k + \eps) =  \sqrt{\frac{\pi}{2}} \mleft ( - \frac{J(\alpha_k + \eps)}{2\sqrt{\alpha_k + \eps}} +  \sqrt{\alpha_k + \eps} J'(\alpha_k + \eps) \mright).
\end{equation}
\Cref{eq:hankel-prime,eq:mcmahon} show that $\sqrt{\alpha_k} J'(\alpha_k) = O(1)$ as $k \to \infty$, and since, for large $k$, the $\alpha_k + \eps$ live in a region where $J'$ is uniformly Lipschitz continuous, we see that $\sqrt{\alpha_k + \eps} J'(\alpha_k + \eps)$, and thus $H'_k(\eps)$ by \cref{eq:H-k-p}, remains bounded uniformly in $|\eps| \leq 1$ and $k$. Thus, when $k$ is sufficiently large, $H_k$
is Lipschitz continuous on the unit disk with a constant that we may choose independently of $k$.
Therefore, since the circle of radius $3/4$ is compact, with a simple covering argument we may deduce from \cref{eq:lower-eps} that there exists $K \in \N$ such that $|H_k(\eps)| \geq c/4$ for all $\eps \in \Co$ with $|\eps| = 3/4$ and $k \geq K$. In particular, using \cref{eq:mcmahon} we deduce that there exists $c' > 0$ such that, taking $K$ larger if needed,
\begin{equation}
\label{eq:lower-bound-Jk}
\inf_{\eps \in \Co,\ |\eps| = \frac{3}{4}} {|J(\alpha_k + \eps)|} \geq \frac{c'}{\sqrt{k}}, \quad k \geq K.
\end{equation}

\emph{Step 2: existence of complex roots.} Our argument here is based on Rouch\'e's theorem.
We consider roots $\lambda$ with large real part and small imaginary part, and thus both $\lambda$ and $\i \lambda - \lambda^2$ may be assumed nonzero. Therefore, \cref{eq:bessel-roots-0} is equivalent to
\begin{equation}
\label{eq:equi-J}
J(\lambda) + \frac{1}{\i - \lambda}J'(\lambda) = 0.
\end{equation}
As mentioned above, we are looking for roots of the form $\lambda_k = \alpha_{k} + \eps_k$, where $\eps_k$ is a complex number of small modulus. Recall that $\alpha_{k} \to + \infty$ as $k \to + \infty$, and thus, by taking $k$ sufficiently large, we may assume that $|\i - \alpha_{k} - \eps| \geq 1$ for all complex numbers $\eps$ with $|\eps| \leq 1$.
Then, we define functions $F_k$ and $G_k$ of the  variable $\eps$ by
\begin{equation}
F_k(\eps) \triangleq J(\alpha_k + \eps), \quad 
G_k(\eps) \triangleq J(\alpha_{k} + \eps) + \frac{1}{\i - \alpha_{k} - \eps} J'(\alpha_{k} + \eps).
\end{equation}
These are holomorphic on the open unit disk, where each $F_k$ has a single simple zero, $0$. Let
\begin{equation}
a_k \triangleq \sup_{\eps \in \Co,\ |\eps| = \frac{3}{4}} \mleft | \frac{1}{\i - \alpha_{k} - \eps} J'(\alpha_{k} + \eps)  \mright | =  \sup_{\eps \in \Co,\ |\eps| = \frac{3}{4}} |G_k(\eps) - F_k(\eps)|,  \quad b_k \triangleq \inf_{\eps \in \Co,\ |\eps| = \frac{3}{4}} {|F_k(\eps)|}.
\end{equation}
Then, using \cref{eq:mcmahon}, boundedness of $J'$ near the real axis and the lower bound \cref{eq:lower-bound-Jk} established at the previous step, we see that $a_k/b_k = O(1/\sqrt{k}) \to 0 $ as $k \to + \infty$. As a result, there exists $K \in \N$ such that
\begin{equation}
\label{eq:sin}
|F_k(\eps) - G_k(\eps)| < |G_k(\eps)|, \quad  |\eps| = \frac{3}{4}, \quad k \geq K.
\end{equation}
Then, Rouch\'e's theorem guarantees that, for all $k \geq K$, $F_k$ possesses a unique zero $\eps_k$, which is simple, in the open disk of radius $3/4$; by \cref{eq:equi-J}, $\lambda_k \triangleq \alpha_k + \eps_k$ is indeed a complex root of \cref{eq:bessel-roots-0}.

\emph{Step 3: asymptotic behavior of roots.}  
Let us now derive finer asymptotics for the sequence $\{\lambda_k\}_{k \gg 1}$. By \cref{eq:hankel,eq:hankel-prime}, since $\lambda_k = \alpha_k + \eps_k$ are roots of \cref{eq:bessel-roots-0} they must satisfy
\begin{multline}
\label{eq:big-expansion}
p(\alpha_k + \eps_k) \cos\mleft( \alpha_k + \eps_k - \frac{\pi}{4} \mright) - q(\alpha_k + \eps_k) \sin \mleft ( \alpha_k + \eps_k - \frac{\pi}{4} \mright) \\ =  \frac{1}{\i - \alpha_k - \eps_k} \mleft ( f(\alpha_k + \eps_k) \sin \mleft( \alpha_k + \eps_k - \frac{\pi}{4} \mright) + g(\alpha_k + \eps_k) \cos \mleft( \alpha_k + \eps_k - \frac{\pi}{4}\mright)\mright).
\end{multline}
To ease the notation, let $\varphi_k \triangleq k\pi - \pi/4$. In what follows we shall use McMahon's expansion \cref{eq:mcmahon} at the order $O(1/k^3)$. First, similarly as in \cref{eq:cos-sin-eps}, when $k \to + \infty$,
\begin{equation}
\label{eq:cos}
\cos\mleft(\alpha_k + \eps_k - \frac{\pi}{4}\mright) = \cos \mleft ( k \pi - \frac{\pi}{2} + \frac{1}{8\varphi_k} + \eps_k + O\mleft(\frac{1}{k^3}\mright) \mright) = (-1)^k \sin \mleft ( \eps_k + \frac{1}{8\varphi_k} + O\mleft(\frac{1}{k^3}\mright )  \mright),
\end{equation}
We claim that $\eps_k \to 0$ as $k \to + \infty$. Indeed, by \cref{eq:prop-p-q},
\begin{equation}
\mbox{Left-hand side of \cref{eq:big-expansion}}~= \cos\mleft(\alpha_k + \eps_k - \frac{\pi}{4}\mright) + O\mleft(\frac{1}{k} \mright), \quad k \to + \infty.
\end{equation}
But we also see that the right-hand side of \cref{eq:big-expansion} converges to $0$. Since for large $k$, the argument of the sinus in \cref{eq:cos} lies within the unit disk, that convergence is possible only if $\eps_k \to 0$ as $k +\infty$, which proves our claim. In fact, the right-hand side of \cref{eq:big-expansion} is of size $O(1/k)$; and since Taylor expansions in $\eps_k$ are now possible, \cref{eq:cos} reveals in turn that $\eps_k = O(1/k)$ as well.
Then,
\begin{equation}
\sin \mleft ( \eps_k + \frac{1}{8\varphi_k} + O\mleft(\frac{1}{k^3}\mright )  \mright) = \eps_k + \frac{1}{8\varphi_k} + O\mleft(\frac{1}{k^3}\mright), \quad k \to + \infty.
\end{equation}
On the other hand
\begin{equation}
\sin\mleft ( \alpha_k + \eps_k - \frac{\pi}{4} \mright) =  (-1)^{k+1} \cos \mleft ( \eps_k + \frac{1}{8\varphi_k} + O\mleft(\frac{1}{k^3}\mright) \mright) = (-1)^{k+1} + O\mleft (\frac{1}{k^2}\mright), \quad k \to + \infty,
\end{equation}
and
\begin{equation}
\frac{1}{\i - \alpha_k - \eps_k} 
= \frac{1}{\i - \varphi_k} + O\mleft(\frac{1}{k^3}\mright) 
,\quad k \to + \infty.
\end{equation}
Our goal now is to expand \cref{eq:big-expansion} at the order $O(1/k^3)$. Keeping in mind the order of each term in our previous developments, we observe that the following asymptotic estimates, which result from \cref{eq:prop-p-q,eq:prop-f-g}, are good enough: as $k \to + \infty$,
\begin{subequations}
\begin{align}
&p(\alpha_k + \eps_k) = 1 + O\mleft(\frac{1}{k^2}\mright), && q(\alpha_k + \eps_k) 
= - \frac{1}{8\varphi_k} + O\mleft (\frac{1}{k^3} \mright), \\
& f(\alpha_k + \eps_k) = 1 + O\mleft(\frac{1}{k^2}\mright), && g(\alpha_k + \eps_k) = O\mleft( \frac{1}{k}\mright).
\end{align}
\end{subequations}
By carefully plugging the above computations into \cref{eq:big-expansion} we finally get
\begin{equation}
(-1)^k \eps_k 
=
\frac{(-1)^{k+1}}{\i - \varphi_k}  + O\mleft (\frac{1}{k^3} \mright), \quad k \to + \infty,
\end{equation}
which shows that
\begin{equation}
\eps_k = - \frac{1}{\i - \varphi_k} + O\mleft (\frac{1}{k^3} \mright) = \frac{\varphi_k}{1 + \varphi_k^2} + \frac{\i}{1 + \varphi_k^2} +   O\mleft (\frac{1}{k^3} \mright), \quad k \to + \infty,
\end{equation}
as required.
\end{proof}

\begin{coro}
\label{coro:qm-bessel}
 For all sufficiently large integers $k$, let $\lambda_k$ be the complex root of \cref{eq:bessel-roots-0} given by \cref{prop:roots-bessel}. Then $z_k \triangleq \i \lambda_k$ is an eigenvalue of $P$. Furthermore, writing  $\omega_k \triangleq \im z_k$, and taking any sequence of normalized eigenvectors $u_k$ associated with $z_k$, we have
 \begin{equation}
\|P(\i \omega_k)u_k\|_{\L(H)} = O(\omega_k^{-1}), \quad k \to + \infty.
 \end{equation}
\end{coro}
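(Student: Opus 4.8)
The plan is to exploit the fact that $z_k = \i\lambda_k$ is a genuine eigenvalue of the pencil $P$, so that $P(z_k)u_k = 0$ by hypothesis, and then to control the error introduced by replacing $z_k$ with $\i\omega_k$ inside $P$. First I would record the consequences of \cref{prop:roots-bessel}: the complex roots $\lambda_k$ of \cref{eq:bessel-roots-0} exist for $k \gg 1$, and writing $\omega_k \triangleq \im z_k = \re\lambda_k$ and $\gamma_k \triangleq \im\lambda_k$, we have $\omega_k = k\pi - \frac{\pi}{4} + o(1) \to +\infty$, while
\begin{equation}
\label{eq:gamma-small}
\gamma_k = \frac{1}{1 + \mleft(k\pi - \frac{\pi}{4}\mright)^2} + O(k^{-3}) = O(\omega_k^{-2}), \quad k \to +\infty,
\end{equation}
since $\mleft(k\pi - \frac{\pi}{4}\mright)^2 = \omega_k^2 + O(1)$. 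The decisive observation is that \cref{eq:gamma-small} says precisely that $z_k$ lies at distance $O(\omega_k^{-2})$ from the imaginary axis: indeed $\i\omega_k - z_k = \i\omega_k - \i\lambda_k = \gamma_k$. By \cref{lem:bessel-eg} applied with $n = 0$, each such $z_k$ belongs to $\sigma(P)$, so the normalized eigenvectors $u_k$ appearing in the statement satisfy $\|u_k\|_H = 1$ and $P(z_k)u_k = 0$.

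Next, using the explicit form $P(z) = A + zBB^\ast + z^2$ recalled in \cref{eq:pencil} and subtracting the relation $P(z_k)u_k = 0$, I would write
\begin{equation}
\label{eq:pencil-diff}
P(\i\omega_k)u_k = \big(P(\i\omega_k) - P(z_k)\big)u_k = (\i\omega_k - z_k)BB^\ast u_k + \big((\i\omega_k)^2 - z_k^2\big)u_k = \gamma_k BB^\ast u_k + \gamma_k(\i\omega_k + z_k)u_k,
\end{equation}
having used $\i\omega_k - z_k = \gamma_k$ and $(\i\omega_k)^2 - z_k^2 = (\i\omega_k - z_k)(\i\omega_k + z_k)$. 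Taking $H$-norms in \cref{eq:pencil-diff}, using $\|u_k\|_H = 1$, the boundedness of $BB^\ast$ on $H$, and $|\i\omega_k + z_k| = |2\i\omega_k - \gamma_k| = O(\omega_k)$, yields
\begin{equation}
\|P(\i\omega_k)u_k\|_H \leq |\gamma_k|\,\|BB^\ast\|_{\L(H)} + |\gamma_k|\,|\i\omega_k + z_k| = O(\omega_k^{-2}) + O(\omega_k^{-2}\cdot\omega_k) = O(\omega_k^{-1}),
\end{equation}
which is the desired estimate.

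I do not anticipate any genuine obstacle here: the corollary is essentially a bookkeeping consequence of the root asymptotics already established in \cref{prop:roots-bessel}, the whole content being that the eigenvalues $z_k$ sit at distance only $O(\omega_k^{-2})$ from $\i\R$, so that replacing $z_k$ by $\i\omega_k$ in the quadratic pencil costs at most one power of $\omega_k$. The only points deserving a little care are reading the $O(\omega_k^{-2})$ bound for $\gamma_k$ off the expansion correctly and interpreting $\|P(\i\omega_k)u_k\|$ as the norm of a vector of $H$ (rather than an operator norm). Finally, this estimate is exactly what is needed to conclude, through a one-line argument by contradiction, that $\|P(\i\omega_k)^{-1}\|_{\L(H)}$ cannot be $o(\omega_k)$; combined with $\omega_k \to +\infty$, this will show that the resolvent bound of \cref{prop:res} cannot be improved and, via semigroup theory, deliver \cref{th:intro-sharp}.
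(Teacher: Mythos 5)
Your proof is correct and is essentially the same as the paper's: both exploit $P(z_k)u_k = 0$ and bound $\|(P(\i\omega_k) - P(z_k))u_k\|_H$ using the fact that $z_k$ is $O(\omega_k^{-2})$-close to $\i\omega_k$ by \cref{prop:roots-bessel}; your factoring of $(\i\omega_k)^2 - z_k^2$ versus the paper's direct expansion with $\sigma_k = \re z_k = -\gamma_k$ is purely cosmetic. You also correctly flag that the $\L(H)$ subscript in the statement is a typo for $H$, as the quantity is a vector norm.
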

\begin{proof}
The first statement immediately follows from \cref{lem:bessel-eg}. We have $\omega_k = \re \lambda_k > 0$ and $\sigma_k \triangleq \re z_k = - \im \lambda_k < 0$. Then, taking normalized eigenvectors $u_k$,
\begin{equation}
\label{eq:dev-P}
0 = P(z_k)u_k = (A + z_k BB^\ast + z_k^2)u_k = (A + \i \omega_k BB^\ast - \omega_k^2)u_k  + ( \sigma_k BB^\ast + \sigma_k^2 + 2 \i \sigma_k \omega_k)u_k,
\end{equation}
and recognizing here the expression of $P(\i \omega_k)$ we deduce that
\begin{equation}
\|P(\i \omega_k)u_k\|_H = O(|\sigma_k|) + O(\sigma_k^2) + O (|\sigma_k|\omega_k), \quad k \to + \infty.
\end{equation}
Owing to the asymptotics of \cref{prop:roots-bessel}, $\sigma_k = O(1/k^2)$ and $\omega_k \sim k \pi$, and the result follows.
\end{proof}
\begin{proof}[Proof of \cref{th:intro-sharp}]
 If for some $\eps > 0$, the supremum in \cref{eq:op-sup} were finite for all data $(u_0, u_1) \in \dom(A) \times V$ then \cite[Proposition 2.4]{AnaLea14}, based on \cite[Theorem 2.4]{BorTom10}, would give 
\begin{equation}
\|P(\i \omega)^{-1}\|_{\L(H)} = O(|\omega|^{1-\eps}), \quad \omega \to \pm \infty,
\end{equation}
which contradicts \cref{coro:qm-bessel}.
\end{proof}

\section{Concluding remarks}
\label{sec:num}

We conclude our article by giving some comments and perspectives.
\begin{itemize}
    \item In our work the whole dynamic boundary is damped. As a possible direction for future work one could investigate \emph{localized} damping, of the form $b(x) \partial_t u$ in \cref{eq:IBVP}, similarly as in \cite{Buffe17}. With a (nonnegative, sufficiently smooth) control function $b$ supported on some subset of $\Gamma_0$ that geometrically controls $\Omega$, it is reasonable to expect that the energy decay rate $1/t$ of \cref{th:poly-stab} remains valid.

    \item We have shown that the geometric control condition is essentially a sufficient condition for the stabilization rate $1/t$. With our approach we should also be able to establish slower rates when the geometric control conditions fails to hold but \emph{non-uniform} Hautus tests are available nonetheless, for instance when the Schr\"odinger equation in $\Omega$ with homogeneous Dirichlet data is exactly observable from its Neumann trace on $\Gamma_0$. In this more general setting however, there are reasons to believe that our black box observability argument would lead to non-optimal results; see the brief discussion regarding the result of \cite{NicLao10} in our introduction. Whether or not the geometric control condition is necessary for achieving the energy decay rate $1/t$ remains an open question.

    \item An alternative approach to establishing \cref{th:poly-stab} would be to prove that the Schr\"odinger equation with first-order (in time) Wentzell boundary condition
    \begin{subequations}
    \begin{align}
    &(\i \partial_t - \Delta) \Psi = 0 &&\mbox{in}~\Omega \times (0, T), \\
    &(\i \partial_t - \Delta_\Gamma) \Psi = - \partial_\nu \Psi &&\mbox{on}~\Gamma_0 \times (0, T), \\
    &\Psi = 0&&\mbox{on}~\Gamma_1 \times (0, T),
    \end{align}
    \end{subequations}
    is exactly observable for some $T > 0$ (with respect to the $L^2(\Omega) \times L^2(\Gamma_0)$-energy) from its Dirichlet trace on $\Gamma_0$. We do not know if this property is equivalent to the stabilization rate $1/t$ for \cref{eq:IBVP}.

    \item \Cref{th:intro-sharp} also provides a non-controllability result in the disk case $\Omega = \mathbb{D}$. 
    Let us consider the ``open-loop'' system associated with \cref{eq:IBVP}:
\begin{subequations} 
\label{eq:IBVP cons}
\begin{align}
&(\partial_t^2 - \Delta)u = 0&&\mbox{in}~\Omega \times (0,T), \\
&(\partial_t^2 - \Delta_\Gamma)u = - \partial_\nu u + f&&\mbox{on}~\Gamma \times (0,T),
\end{align}
\end{subequations}
where the control $f$ is taken in $L^2(\Gamma \times (0, T))$ and the state space is still $\H$. Without control ($f = 0$), \cref{eq:IBVP cons} is conservative: the energy $E(u, t)$ of each solution remains constant. Even when $T$ is taken large,
this system fails to be \emph{exactly} controllable; otherwise, following Lions' Hilbert uniqueness method, the observability inequality
\begin{equation}\label{eq:obs-gamma_0}
E(u,0) \leq C \int_0^T \!\! \int_{\Gamma} |\partial_t u|^2\, \d\sigma\, \d t,
\end{equation}
would hold for some $C>0$ and all initial data. In the context of second-order systems of the form \cref{eq:IBVP-abstract}, it is known that such a uniform observability estimate for the uncontrolled problem implies exponential stability of the feedback system \cref{eq:IBVP},\footnote{
This is sometimes referred to as \emph{Russell's principle}; see also \cite{Har89,AmmTuc01,ChiPau23}.}
which is indeed not possible by \cref{th:intro-sharp}. 

\item On the other hand, our analysis does not reveal any \emph{a priori} obstruction to exact controllability or uniform stabilization by means of control or damping on the non-dynamic boundary $\Gamma_1$.
For instance, consider the particular case
of a two-dimensional annular domain $\Omega = \{ x \in \R^2 : R_1 < |x | < R_2\}$, $0 < R_1 < R_2$,
with inner and outer boundaries  $\Gamma_0 = \{ |x| = R_1\}$ and $\Gamma_1 = \{ |x| = R_2 \}$. In this particular geometry, the authors of \cite{baudouin2022unified} investigated the following system:
\begin{subequations} 
\label{eq:BDEM}
\begin{align}
&(\partial_t^2 - \Delta)u = 0&&\mbox{in}~\Omega \times (0,T), \\
&(\alpha\partial_t^2 - \beta\Delta_\Gamma)u = - \partial_\nu u&&\mbox{on}~\Gamma_0 \times (0,T), \\
&u = 0 &&\mbox{on}~\Gamma_1 \times (0,T),
\end{align}
\end{subequations}
where $\alpha$ and $\beta$ are positive parameters. Here the outer boundary $\Gamma_1$ carries a homogeneous Dirichlet boundary condition and satisfies the geometric control condition.
They proved \cite[Theorems 2.2 and 2.4]{baudouin2022unified} that exact observability of solutions to \cref{eq:BDEM} via their velocity traces on the \emph{static} boundary $\Gamma_1$ holds when
when $0<\alpha<\beta$ but \emph{not} when $0<\beta<\alpha$. The case 
$\alpha=\beta$ is open.
These findings
constitute further evidence that, in models such as \cref{eq:IBVP,eq:BDEM}, the dynamic nature of the boundary coupling is the main limiting factor for stabilization and observation. 




\end{itemize}

\appendix

\section{Tangential semiclassical pseudodifferential operators}

\label{sec:tangential}

In this section we define a class of \emph{tangential} pseudodifferential operators on $\R^d_+ \triangleq \R^{d-1} \times (0, +\infty)$ with a small parameter $h > 0$. We will mostly present or adapt results from \cite{Zwo12book,DyaZwo19book,RouLeb22a}. We also wish to point out that supplementary material from \cite{Buffe17,Fil24} has been helpful to the authors.

Here $y = (y_1, \dots, y_{d-1}, y_d) = (y_\tau, y_d) \in \R^d_+$ is the space variable, $y_\tau$ and $y_d$ being the tangential and normal components, respectively, and $\xi_\tau = (\xi_1, \dots, \xi_{d-1}) \in \R^{d-1}$ is the tangential dual variable. 
We fix a positive constant $h_0$ and consider a small parameter $0 <h < h_0$.

First we introduce some semiclassical symbols and pseudodifferential operators on $\R^{d-1}$. For $m \in \Z$, the symbol class $S_h^m(\R^{d-1} \times \R^{d-1})$ is the set of smooth functions $b : \R^{d-1} \times \R^{d-1} \times (0, h_0) \to \Co$ satisfying, for all multiindices $(\alpha, \beta) \in \N^{d-1} \times \N^{d-1}$,
\begin{equation}
\sup_{0 < h < h_0} \sup_{
(y_\tau, \xi_\tau) \in \R^{d-1} \times \R^{d-1}
} 
(1 + |\xi_\tau|^2)^{(|\beta| - m)/2} | \partial_{y_\tau}^\alpha \partial_{\xi_\tau}^\beta b(y_\tau, \xi_\tau; h)| < + \infty.
\end{equation}
We may associate to each symbol $b \in S_h(\R^{d-1}\times\R^{d-1})$ a semiclassical pseudodifferential operator $\Oph(b)$ via the standard quantization procedure:
\begin{equation}
\label{eq:oph}
(\Oph(b) \varphi)(y_\tau) \triangleq  \frac{1}{(2\pi h)^{d-1}}\iint_{\R^{d-1} \times \R^{d-1}} e^{\i h^{-1}\langle y_\tau - y'_\tau, \xi_\tau \rangle} b(y_\tau, \xi_\tau ; h) \varphi(y'_\tau) \, \d y'_\tau \, \d \xi_\tau, \quad y_\tau \in \R^{d-1},
\end{equation}
where $\varphi$ is taken in the Schwartz class $\S(\R^{d-1})$. Such operators are linear and continuous on $\S(\R^{d-1})$ and also $\S'(\R^{d-1})$ (tempered distributions); see \cite[Theorem 4.16]{Zwo12book}.
In \cref{eq:oph} the integral may be viewed as an \emph{oscillatory integral}, or rewritten as
$
(\Oph(b) \varphi)(y_\tau) = \F_h^{-1}(b(y_\tau, \cdot; h) \F_h \varphi)(y_\tau)
$,
where $\F_h$ denotes the semiclassical Fourier transform;
for details regarding convergence, the reader is referred to \cite[Chapters 3 and 4]{Zwo12book} or \cite[Section 2.4]{RouLeb22a}. The resulting space of semiclassical pseudodifferential operators is denoted by $\Psi_h^m(\R^{d-1})$.

Now,
the class of tangential symbols $S_{\tau,h}^m(\overline{\R^d_+} \times \R^{d-1})$, $m  \in \Z$, is the set of smooth functions \smash{$a : \overline{\R^d_+} \times \R^{d-1} \times (0, h_0) \to \Co$} satisfying, for all multiindices $(\alpha, \beta) \in \N^{d} \times \N^{d-1}$, 
\begin{equation}
\sup_{0 < h < h_0} \sup_{
(y, \xi_\tau) \in \overline{\R^d_+} \times \R^{d-1}
} 
(1 + |\xi_\tau|^2)^{(|\beta| - m)/2} | \partial_y^\alpha \partial_{\xi_\tau}^\beta a(y, \xi_\tau; h)| < + \infty.
\end{equation}
We then introduce a quantization procedure $\Opht$ for symbols in $S^m_{\tau,h}(\overline{\R^d_+} \times \R^{d-1})$. Let $\overline{\S}(\R^d_+)$ be the space of restrictions to \smash{$\overline{\R^d_+}$} of functions in $\S(\R^d)$. Given \smash{$a \in S^m_{\tau, h}(\overline{\R^d_+} \times \R^{d-1})$} and $\varphi \in \overline{\S}(\R^d_+)$, we let
\begin{equation}
\label{eq:opth}
(\Opht(a)\varphi)(y) \triangleq  \frac{1}{(2\pi h)^{d-1}}\iint_{\R^{d-1} \times \R^{d-1}} e^{\i h^{-1}\langle y_\tau - y'_\tau, \xi_\tau \rangle} a(y, \xi_\tau ; h) \varphi(y'_\tau, y_{d})  \, \d y'_\tau \, \d \xi_\tau
\end{equation}
for all $y = (y_\tau, y_d) \in \overline{\R^d_+}$.
Since $\varphi$ is taken in  $\overline{\S}(\R^d_+)$, the integrand in \cref{eq:opth}  as well as  all its derivatives with respect to $y$ are absolutely integrable, and thus $\Opht(a)\varphi$ is well-defined in \smash{$\C^\infty(\overline{\R^d_+})$} by standard Lebesgue integration theory.
These  tangential  operators constitute a space which we denote by \smash{$\Psi_{\tau,h}^m(\overline{\R^d_+})$}. Additional results can be found in \cite[Section 2.10]{RouLeb22a}.

The subspace of $\Psi_{\tau,h}^m(\overline{\R^d_+})$ (resp. $\Psi_h^m (\R^{d-1}))$ for which $a$ (resp. $b$) is a polynomial of order at most $m \geq 0$ in $\xi_\tau$ with coefficients varying in $y$ (resp. $y_\tau$) is denoted by \smash{$\diff_{\tau,h}^m(\overline{\R^{d}_+})$} (resp. \smash{$\diff^m_h(\R^{d-1})$}; those are spaces of (semiclassical) \emph{differential} operators.

All these operators, defined here for smooth functions,
possess  continuous extension on some natural Sobolev spaces. For the reader's convenience, such properties are given precise statements  when needed throughout the manuscript; again, see also \cite{Zwo12book,DyaZwo19book,RouLeb22a}.
When there is no risk of confusion, we shall drop $\R^{d}_+$ and $\R^{d-1}$ from our notation and simply write $S^m_{\tau,h}$, $\Psi_{\tau, h}$, and so on.

\begin{rem}
\label{rem:small-h}
We use a small parameter $h > 0$ as in \cite{Zwo12book,DyaZwo19book} whereas \cite{RouLeb22a} uses a large parameter $\lambda = 1/h$. The operator classes under consideration are essentially the same but the scaling and notation are different. For instance, if $\Delta_\tau$ is the Laplacian on $\R^{d-1}$ then, as a differential operator with large parameter in the terminology of \cite{RouLeb22a}, $- \Delta_\tau - \lambda^2$ has principal symbol $|\xi_\tau|^2 - \lambda^2$; it is related to the semiclassical operator $- h^2 \Delta_\tau - 1$ by
\begin{equation}
- \Delta_\tau - \lambda^2 = \operatorname{Op}(|\xi_\tau|^2 - \lambda^2) = \lambda^2\Oph(|\xi_\tau|^2 - 1) = \lambda^2 (- h^2 \Delta_\tau - 1),
\end{equation}
where $\operatorname{Op}$ denotes the non-semiclassical standard quantization on $\R^{d-1}$, that is, \cref{eq:oph} with $h = 1$. The reader is referred to \cite[Section 2.11]{RouLeb22a} for further details regarding the correspondence.
\end{rem}

We conclude this section with a positivity inequality adapted  from \cite[Theorem 2.50]{RouLeb22a}.
\begin{theo}[Microlocal tangential G\aa rding inequality]
\label{th:garding}
Consider
\begin{itemize}
    \item A tangential semiclassical differential operator  $P = \Oph(p) \in \diff^ m_{\tau,h}$, where $m \geq 0$ is even;
    \item A compact subset $C$ of \smash{$\overline{\R_+^d}$} and  an open subset $U$ of $C \times \R^{d-1}$;
    \item A tangential symbol $\chi \in S^0_{\tau, h}$ that is $h$-independent and satisfies $\supp(\chi) \subset U$.
\end{itemize}
Suppose that there exists $c> 0$ such that
\begin{equation}
\re p(y, \xi_\tau) \geq c |\xi_\tau|^m, \quad (y, \xi_\tau) \in U.
\end{equation}
Then, there exist $K, K' > 0$ and $0 < h_1 < h_0$ such that, writing $\Chi \triangleq \Opht(\chi)$,
    \begin{equation}
    \label{eq:ann-garding}
    \iint_{\R^d_+} P\Chi \varphi \overline{\Chi \varphi} \, \d y_d \, \d y_\tau \geq K
    \iint_{\R^d_+} 
    \sum_{
    \substack{
    \alpha \in \N^{d - 1} \\ |\alpha|\leq m/2 
    }
    } h^{2|\alpha|}|D^{(\alpha,0)}\Chi \varphi|^2 \, \d y_d \, \d y_\tau
    - K'{{h^m}} 
    \iint_{\R^d_+} |\varphi|^2 \, \d y_d \, \d y_\tau
    \end{equation}
for all $\varphi \in \overline{\S}(\R^{d}_+)$ and all $0 < h < h_1$.
\end{theo}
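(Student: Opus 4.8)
The plan is to reduce the tangential microlocal G\aa rding inequality to the standard (full-space) semiclassical G\aa rding inequality on $\R^{d-1}$, applied \emph{fibrewise} in the normal variable $y_d$, and then integrate out $y_d$. First I would exploit the defining feature of the quantization $\Opht$: for a tangential symbol the normal variable $y_d$ acts purely as a parameter, so for each fixed $y_d \geq 0$ the operator $P\Chi$ restricted to that slice is the composition $\Oph\bigl(p(\cdot,y_d,\cdot)\bigr)\Oph\bigl(\chi(\cdot,y_d,\cdot)\bigr)$ of honest semiclassical pseudodifferential operators on $\R^{d-1}$, with symbols depending smoothly and with uniformly bounded seminorms on the parameter $y_d$ ranging over the compact set $C$ (outside a compact set in $y$ both $\chi$ and the coefficients of $P$ are controlled, and the support condition $\supp(\chi)\subset U\subset C\times\R^{d-1}$ confines everything). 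Thus $\langle P\Chi\varphi,\Chi\varphi\rangle_{L^2(\R^d_+)}=\int_0^{+\infty}\langle P(y_d)\Chi(y_d)\varphi(\cdot,y_d),\Chi(y_d)\varphi(\cdot,y_d)\rangle_{L^2(\R^{d-1})}\,\d y_d$, and it suffices to prove the inequality slicewise with constants $K,K'$ independent of $y_d$.

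For the slicewise estimate I would first handle the elliptic lower bound. On the support of $\chi(\cdot,y_d,\cdot)$ we have $\re p \geq c|\xi_\tau|^m$ by hypothesis; hence the symbol $\re p(y,\xi_\tau)-\tfrac{c}{2}\langle\xi_\tau\rangle^m$ is, modulo a term supported away from $\supp\chi$ (and therefore harmless after conjugation by $\Chi$, using that the product of a pseudodifferential operator with a cutoff disjoint from $\supp\chi$ is $O(h^\infty)$ smoothing — cf.\ the symbol calculus in \cite[Section 2.10]{RouLeb22a}), bounded below by $0$ on a neighbourhood of $\supp\chi$. Write $P = \tfrac{c}{2}\Oph(\langle\xi_\tau\rangle^m) + Q$ with $\re Q \geq 0$ microlocally on $\supp\chi$. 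The first term gives exactly the left-hand side of \cref{eq:ann-garding} up to equivalence of $\sum_{|\alpha|\leq m/2}h^{2|\alpha|}\|D^{(\alpha,0)}\cdot\|^2$ with $\|\Oph(\langle\xi_\tau\rangle^{m/2})\cdot\|^2$ (a standard fact about semiclassical $H^{m/2}_h$-norms), and to the operator $\Chi^*Q\Chi$ I would apply the sharp G\aa rding inequality (\cite[Theorem 4.32]{Zwo12book} or the version in \cite[Theorem 2.50]{RouLeb22a}): since the conjugated symbol $\overline{\chi}\,\#\,q\,\#\,\chi$ has nonnegative real part up to $O(h)$ and $O(h^\infty)$ errors, one gets $\re\langle Q\Chi\varphi,\Chi\varphi\rangle \geq -Kh\|\Chi\varphi\|_{H^{(m-1)/2}_h}^2 - K'h^\infty\|\varphi\|^2$, and the $h$-loss is absorbed into the main positive term for $h$ small (this is where the factor $h^m$ versus $h$ matters; for $m\geq 2$ the absorption is clear, for $m=0$ one uses that the principal part is already bounded below, and in our application $m=2$).

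The main obstacle I anticipate is bookkeeping the \emph{uniformity in $y_d$} and the normal-variable smoothness: one must check that all the constants produced by the symbol calculus and by sharp G\aa rding depend only on finitely many seminorms of $p$ and $\chi$, which are uniformly bounded in $y_d$ because $\chi$ has compact spatial support in $C$ and $p$ has symbol estimates uniform on $\overline{\R^d_+}$ by definition of $\diff^m_{\tau,h}$. A secondary, purely technical point is the density reduction: the inequality is first proved for $\varphi\in\overline{\S}(\R^d_+)$, and one should note that all the operators involved ($\Chi$, $P\Chi$, the $D^{(\alpha,0)}$) extend continuously in the relevant semiclassical Sobolev topologies, so the stated class $\overline{\S}(\R^d_+)$ is the right one and no further extension is needed. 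Once the slicewise estimate is established with $y_d$-uniform constants, integrating in $y_d$ over $(0,+\infty)$ — legitimate since everything has compact support in $y_d$ because $\chi$ (hence $\Chi\varphi$) is compactly supported in the normal direction on $C$ — yields \cref{eq:ann-garding} and completes the proof.
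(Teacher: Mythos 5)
Your proposal takes a genuinely different route from the paper. The paper's proof is a one-line citation: the statement is obtained from \cite[Theorem 2.50]{RouLeb22a} by translating their large-parameter ($\lambda = 1/h$) tangential calculus into the small-parameter form, with the remark that the homogeneity hypotheses of that theorem hold automatically because $P$ is a differential operator and $\chi$ is $h$-independent. You instead attempt a from-scratch proof by slicewise reduction: exploit that $\Opht$ acts fibrewise in $y_d$, apply the ordinary semiclassical G\aa rding inequality on $\R^{d-1}$ for each slice with $y_d$-uniform constants, and integrate. The slicewise observation is correct and is indeed the structural reason tangential calculus works on $\overline{\R^d_+}$, and your remarks on uniformity of constants over the compact spatial support $C$ are also sound.

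There are, however, two gaps that keep this from being a complete substitute for the cited result. First, the decomposition $P = \tfrac{c}{2}\Oph(\langle\xi_\tau\rangle^m) + Q$ with $\re q \geq 0$ near $\supp\chi$ requires $\re p \geq \tfrac{c}{2}\langle\xi_\tau\rangle^m$, whereas the hypothesis only gives $\re p \geq c|\xi_\tau|^m$; these are equivalent only if $|\xi_\tau|$ is bounded away from zero on $U$. This does hold in the paper's application (where $U$ is cut off at $|\xi_\tau|^2 > 2/c$) but is not stated as a hypothesis of the theorem, and your sketch does not flag it; it is instead implicit in the large-parameter homogeneity framework of \cite{RouLeb22a} that the paper invokes. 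Second and more substantively, the remainder bookkeeping in the sharp G\aa rding step is asserted rather than justified. Applying sharp G\aa rding to $\Chi^\ast Q\Chi$ (whose symbol has order $m$ and nonnegative real part up to $O(h)$ corrections) yields a remainder of the form $-Kh\|\varphi\|^2_{H^{(m-1)/2}_h}$, controlled by the argument $\varphi$ itself, not by $\Chi\varphi$. Your claimed form $-Kh\|\Chi\varphi\|^2_{H^{(m-1)/2}_h} - K'h^\infty\|\varphi\|^2$ would require commuting $\Chi$ back out of the $O(h)$ subprincipal term via a higher-order parametrix or a more careful composition expansion; you cite the disjoint-support argument but that only handles the part of the symbol supported away from $\supp\chi$, not the $O(h)$ corrections sitting \emph{on} $\supp\chi$. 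Even granting the absorption of the $H^{(m-1)/2}_h$ term into the main positive contribution, the leftover $L^2$-error is of size $O(h)\|\varphi\|^2$, not the stated $O(h^m)\|\varphi\|^2$. (For the paper's application with $m = 2$ this weaker remainder would in fact still suffice, but it does not establish the theorem as written.) Closing this gap is precisely the content of the finer tangential symbol calculus in \cite[Theorem 2.50]{RouLeb22a}, which is why the paper elects to cite it rather than reprove it.
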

\begin{proof}
This is a particular case of \cite[Theorem 2.50]{RouLeb22a} reformulated in the setting of a small parameter $h$ instead of a large parameter $\lambda$; see \cref{rem:small-h}. Note that the homogeneity hypotheses of that theorem are automatically satisfied due to $P$ being a differential operator and $\chi$ being $h$-independent.
\end{proof}

\section{The geometric control condition}
\label{sec:gcc}

In this section we will give a precise statement of the \emph{geometric control condition} (``GCC'') and briefly discuss its
central role in observation and control of wave equations. The exposition here is inspired by \cite{Melrose78,BarLeb92,le2017geometric,kleinhenz2023sharp}.

Wave equations possess solutions that are localized near some particular curves
in space-time called \emph{rays}. 
Rays follow the laws of geometric optics and in the case of a one-dimensional spatial domain  they coincide with characteristic curves.
To effectively observe or control such solutions, it is necessary that every ray intersects the observation or damping region. The use of rays for the stability analysis of damped hyperbolic systems
originates in the work
of Rauch and Taylor \cite{Rauch1974}, which considers boundaryless manifolds and builds upon
H\"ormander's theorem on the propagation of singularities. In the case of  boundary observation and damping, the theory was subsequently developed by Bardos, Lebeau, and Rauch \cite{BarLeb92}.

Recall that $\Omega$ is a smooth bounded domain of $\R^d$ and let ${M} \triangleq \R \times \overline{\Omega}$ be the associated space-time manifold. 
For simplicity of  exposition and since we are interested in the pure wave equation \cref{eq:wave-equation}, we will only consider the wave operator $P=\partial_{t}^{2}-\Delta$, whose principal symbol is $p= - \omega^{2}+|\xi|^{2}$; here, $p = p(t, x, \omega, \xi)$ is a smooth real function of the
cotangent bundle $T^\ast M$.
The \emph{Hamiltonian vector} field on $T^\ast M$ associated with $p$ is $H_p= - 2\omega \partial_{t}+ 2\xi\cdot\partial_{x}$. A \emph{bicharacteristic} of $P$ is an integral curve of the Hamiltonian vector field $H_p$ in $T^\ast M$ that is contained in the characteristic set 
\begin{equation}
\cha(p) \triangleq \{ (t, x, \omega, \xi) \in T^\ast M : p(t, x, \omega, \xi) = 0, \, (\omega, \xi) \not = 0\}.
\end{equation}
Equivalently, it is a maximal solution 
$s \mapsto (t(s),x(s),\omega(s),\xi(s))$
of the Hamilton–Jacobi system
\begin{equation}
\frac{\d}{\d s} (t, x , \omega, \xi) = (\partial_\omega p, \partial_\xi p, - \partial_t p, - \partial_x p),
\end{equation}
which in our case reduces to
\begin{equation}
\frac{\d}{\d s} (t, x , \omega, \xi) = (- 2\omega ,  2 \xi, 0, 0).
\end{equation}
Hence, for the wave operator, $\omega$ and $\xi$ are constant along the flow while $t$ and $x$ evolve linearly. A ray is the projection
of a bicharacteristic onto the $(t,x)$-space.

To define generalized bicharacteristics we  follow
\cite{Melrose78}; 
see also \cite[Chapter~24]{hormander2007III}, \cite[Section 1C1]{le2017geometric} and \cite[Appendix A.1]{kleinhenz2023sharp}.  Near the boundary $\Gamma = \partial\Omega$, using normal geodesic coordinates $(x_\tau,x_d)$, the principal symbol of 
$-\Delta$ reads as $\xi_d^{2} + \ell(x,\xi_\tau)$, 
{where $\ell$ is a suitable elliptic tangential symbol}; see  \cref{sec:normal-geodesic}.
In these local coordinates $(t, x, \omega, \xi) = (t, x_\tau, x_d, \omega, \xi_\tau, \xi_d)$,
the principal symbol $p$ of the wave operator $\partial_t^2 - \Delta$ takes the form
$
p= - \omega^2 + \xi_d^2 + \ell(x, \xi_\tau)
$,
and the associated Hamiltonian vector field $H_p$ reads as
\begin{equation}
H_p = - 2 \omega \partial_t + 2 \xi_d \partial_{x_d} + (\partial_{\xi_\tau} \ell) \cdot \partial_{x_\tau} - (\partial_{x} \ell) \cdot \partial_\xi.
\end{equation}
We will need to define the \emph{order of contact} of points at the boundary. Note that in local coordinates the boundary $\partial T^* M$ of the cotangent bundle $T^\ast M$ is described by  $\{y_n = 0\}$, and let $\Sigma \triangleq \cha(p) \cap \partial T^\ast M$.
\begin{defi}[Order of contact with the boundary]
\label{def:order-contact}
Let $q = (t, x, \omega, \xi) \in \Sigma$, and let  $f$ be the (smooth, boundary-defining) function defined in local coordinates near $(t, x)$ by $y_d$.
The \emph{order of contact} of bicharacteristics  with the boundary $\partial M$ at $q$ is the smallest integer $k\geq1$ such that
\begin{equation}
(H_p^jf)(q) = 0\quad \mbox{for}~ j=0,\dots,k-1,
\quad \mbox{and}~(H_p^kf)(q)\neq0.
\end{equation}
If $(H_p^jf)(q)=0$ for all $j$, the contact is said to be of \emph{infinite order}.
\end{defi}

In what follows we make repeated use of the previous local coordinates. First, define a symbol $r$ on $\partial T^\ast M$ by 
\begin{equation}
r(t, x_\tau, \omega, \xi_\tau) = - \omega^2 + \ell(0, x_\tau, \xi_\tau),
\end{equation}
so that $r$ represents the tangential part of $p$ restricted to $\partial T^\ast M$.
The associated Hamiltonian vector field $H_r$ on $\partial T^\ast M$ is given  by
\begin{equation}
H_{r} = - 2 \omega \partial_t   + (\partial_{\xi_\tau} \ell) \cdot \partial_{x_\tau} - (\partial_{x_\tau} \ell) \cdot \partial_{\xi_\tau}.
\end{equation}
We define the
\emph{glancing set} $\mathcal{G} \subset \Sigma$ by
\begin{equation}
\mathcal{G} = \mleft \{ (t, x_\tau, 0, \omega, \xi_\tau, \xi_d) \in \Sigma : \xi_d = r(t, x_\tau, \omega, \xi_\tau) = 0 \mright \}.
\end{equation}
The \emph{hyperbolic set} $\H \triangleq \Sigma \setminus \mathcal{G}$ is the complement in $\Sigma$ of the glancing set. Given 
$q = (t, x_\tau, 0, \omega, \xi_\tau, \xi_d) \in \Sigma$, observe that if $q \in \H$ then $r(t, x_\tau, \omega, \xi_\tau) = - \xi_d^2$; as a result, it is easy to see that
\begin{equation}
q \in \H ~\mbox{if and only if}~ r(t, x_\tau, \omega, \xi_\tau) < 0, \quad q \in \mathcal{G} ~\mbox{if and only if}~ r(t, x_\tau, \omega, \xi_\tau) = 0.
\end{equation}
We may decompose the glancing set as 
$\mathcal{G} = \mathcal{G}_2 \supset \mathcal{G}_3 \supset \cdots \supset \mathcal{G}^\infty$, where $1 \leq k \leq +\infty$ indicates the order of contact with the boundary. The construction of generalized bicharacteristics requires that the glancing set $\mathcal{G}^2 \setminus \mathcal{G}^3$ of order exactly $2$ be decomposed into the union of a \emph{diffractive set} $\mathcal{G}^2_d$ and a \emph{gliding set} $\mathcal{G}^2_g$:
\begin{equation}
\mathcal{G}^2_d \triangleq \mleft\{ q \in \mathcal{G}^2 \setminus \mathcal{G}^3 : (H^2_p f)(q) > 0\mright\}, \quad \mathcal{G}^2_g \triangleq \mleft\{ q \in \mathcal{G}^2 \setminus \mathcal{G}^3 : (H^2_p f)(q) < 0\mright\},
\end{equation}
where $f$ is as in \cref{def:order-contact}.
More generally, we consider glancing sets $\mathcal{G}^{2l} \setminus \mathcal{G}^{2l + 1}$ of even order $2l$, $l \geq 1$, and we let
\begin{equation}
\mathcal{G}^{2l}_d \triangleq \mleft\{ q \in \mathcal{G}^{2l} \setminus \mathcal{G}^{2l +1} : (H^2_p f)(q) > 0\mright\}, \quad \mathcal{G}^{2l}_g \triangleq \mleft\{ q \in \mathcal{G}^{2l} \setminus \mathcal{G}^{2l+1} : (H^2_p f)(q) < 0\mright\},
\end{equation}
and, just as in \cite{le2017geometric}, we define the set of \emph{diffractive points} as
\begin{equation}
\mathcal{G}_d \triangleq \bigcup_{l \geq 0} \mathcal{G}_d^{2l}.
\end{equation}
We are in a position to present a definition of generalized bicharacteristics. 
\begin{defi}[Generalized bicharacteristic]
A \emph{generalized bicharacteristic} of $p$ is a  map 
\begin{equation}
\gamma : \dom(\gamma) \subset \R \to (\cha(p)\setminus \partial T^\ast M)\cup \mathcal{G},
\end{equation}
where $\dom(\gamma)$ is open, that is differentiable and satisfies
the following properties: 
\begin{enumerate}
\item For all $s \in \dom(\gamma)$, $\dot{\gamma}(s)=H_p(\gamma(s))$ 
      if $\gamma(s)\in \cha(p) \setminus \partial T^\ast M$
      or $\gamma(s)\in \mathcal{G}_d^{2}$;
\item For all $s \in \dom(\gamma)$, $\dot{\gamma}(s)=H_{r}(\gamma(s))$ if $\gamma(s)\in \mathcal{G}\setminus \mathcal{G}_d^{2}$;
\item For every $s_0\in \R \setminus \dom(\gamma)$, there exists $\delta > 0$ such that $s \in \dom(\gamma)$ if $|s_0 - s| < \delta$ and
\begin{equation}
\gamma(s) \in \cha (p) \setminus \partial T^\ast M, \quad 0 < |s - \delta| < \delta;
\end{equation}
furthermore, $\gamma$ has left and right limits  $q^\pm$ at $s_0$ that satisfy, in local coordinates,
    \begin{equation}
    x_d^-  = x_d^+ = 0, \quad (t^-, x_\tau^-) = (t^+, x_\tau^+), \quad (\omega^-, \xi_\tau^-) = (\omega^+, \xi_\tau^+), \quad \xi_d^- = -  \xi_d^+.
    \end{equation}
\end{enumerate}
\end{defi}
\begin{figure}[ht]
\centering
\includegraphics[scale=0.57]{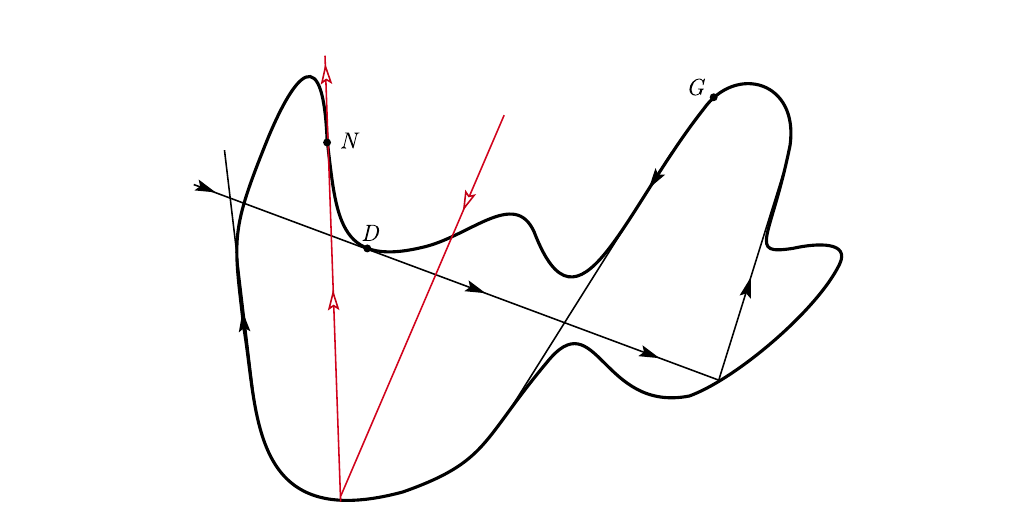}
\caption{Two generalized bicharacteristics (projected onto the $x$-space), with different type of points of contact: nondiffractive ($N$), diffractive ($D$), gliding ($G$). Adapted from \cite[Figure 1.2]{miller2002escape}.}
\label{fig:geo png}
\end{figure}

\Cref{fig:geo png} illustrates projections of generalized bicharacteristics onto the $x$-space and different type of contacts at the boundary.
Next, given a point $(t, x)$ near $\partial M$,
let
$\prescript{b}{}{T}_{(t, x)}M$  be the tangent space spanned by $\partial_t$, $\partial_{x_1}, \dots, \partial_{x_{d-1}}$, and $x_d\partial_{x_d}$. 
The \emph{compressed cotangent bundle} is 
\begin{equation}
\prescript{b}{}{T}^{*}{M} \triangleq \bigcup_{(t, x)\in{M}}(\prescript{b}{}{T}_{(t, x)}{M})^{*}
\end{equation}
and the \emph{compression map} $j:T^{*}{M}\to \prescript{b}{}{T}^{*}{M}$ is defined, using local coordinates, 
by
\begin{equation}
j(t, x_\tau, x_d, \omega, \xi_\tau, \xi_d) = j(t, x_\tau, x_d, \omega, \xi_\tau, x_d \xi_d).
\end{equation}
A \emph{compressed generalized bicharacteristic} is the image under $j$ of a
generalized bicharacteristic of~$p$. We will use the following standard hypothesis on points of contact with the boundary.
\begin{as}[No infinite-order contact]
\label{ass:finite-contact} We have $\mathcal{G}^\infty = \emptyset$.
\end{as}
\begin{prop}[\cite{Melrose78}] Under \cref{ass:finite-contact},
 compressed generalized bicharacteristic 
are uniquely determined by any of their points.
\end{prop}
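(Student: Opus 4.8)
The statement is the Melrose--Sjöstrand uniqueness theorem for generalized bicharacteristics under the no-infinite-order-contact assumption. The plan is to reduce uniqueness of compressed generalized bicharacteristics to a local uniqueness statement near each point $q_0$ of the characteristic set, distinguishing the three standard regimes: interior/hyperbolic points, diffractive points, and glancing (non-diffractive) points. First I would observe that away from $\partial T^\ast M$ and at hyperbolic points, the vector field $H_p$ is smooth and transverse to $\{y_d = 0\}$ (the transversality being precisely the content of $q_0 \notin \mathcal{G}$, i.e. $(H_pf)(q_0)\neq 0$), so the Picard--Lindelöf theorem gives a unique integral curve through $q_0$; the reflection condition in property (3) of the definition then pins down how the curve continues through the boundary, so there is no ambiguity there. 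At a diffractive point $q_0 \in \mathcal{G}^{2l}_d$, the curve follows $H_p$ by property (1), and one checks using $(H_p^2 f)(q_0) > 0$ that $f\circ\gamma$ has a strict local minimum, so the bicharacteristic leaves the boundary into the interior on both sides; again ODE uniqueness for $H_p$ applies. The delicate case is the gliding/glancing regime $\mathcal{G}\setminus\mathcal{G}^2_d$, where $\gamma$ is governed by the tangential field $H_r$ but may also pass through lower-dimensional glancing strata of higher contact order.

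For the glancing case I would follow Melrose's original argument. The key is to set up good local coordinates near $q_0$ in which $p$ has the normal form of a glancing hypersurface, reducing to the model where one studies the flow of $H_r$ on $\partial T^\ast M$ together with a careful bookkeeping of the stratification $\mathcal{G}^2 \supset \mathcal{G}^3 \supset \cdots$. The assumption $\mathcal{G}^\infty = \emptyset$ guarantees that every point of $\mathcal{G}$ has some finite contact order $k$, and the set where contact order is at least $k$ is a submanifold of decreasing dimension; this is what makes an induction or a "finite descent" argument possible. Concretely, I would argue that if two generalized bicharacteristics $\gamma_1,\gamma_2$ agree at $s=0$ with $\gamma_1(0) = \gamma_2(0) = q_0$, then they agree on a neighborhood of $0$: on any interval where both lie in $\cha(p)\setminus\partial T^\ast M$ or in $\mathcal{G}^2_d$ they solve the same ODE $\dot\gamma = H_p\gamma$; on any interval in $\mathcal{G}\setminus\mathcal{G}^2_d$ they solve $\dot\gamma = H_r\gamma$; the only issue is transitions, and one shows the transition times and the side (interior vs. boundary) into which the curve proceeds are forced by the sign of the appropriate iterated derivative $H_p^k f$, using continuity/differentiability of $\gamma$ and the finite-contact hypothesis to rule out infinitely many oscillations. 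A connectedness argument on $\dom(\gamma)$ then upgrades local uniqueness to global uniqueness, and applying the compression map $j$ (which is injective modulo the reflection identification built into property (3)) transfers the conclusion to compressed generalized bicharacteristics.

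The main obstacle is unquestionably the glancing analysis: near a glancing point the switching between the $H_p$-flow and the $H_r$-flow can in principle happen on a complicated closed set of parameter values, and controlling this requires the normal form for glancing hypersurfaces and the stratification of $\mathcal{G}$ by contact order. This is exactly where $\mathcal{G}^\infty=\emptyset$ is essential: without it one could have a bicharacteristic with infinitely-degenerate tangency, and uniqueness genuinely fails. Since a complete proof of this is the substance of Melrose--Sjöstrand's work \cite{Melrose78}, in the write-up I would not reproduce it but rather cite \cite{Melrose78} (and the textbook treatment in \cite[Chapter~24]{hormander2007III}) for the glancing normal form and the local uniqueness statement, and then assemble the global conclusion from the local one by the connectedness argument sketched above together with the reflection condition in the definition of generalized bicharacteristic. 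In other words, the honest plan is: isolate the easy interior, hyperbolic, and diffractive cases by elementary ODE uniqueness; invoke the hard glancing case as the cited result; glue via connectedness of $\dom(\gamma)$; and finally push forward by $j$.
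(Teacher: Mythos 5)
The paper gives no proof of this proposition: it is stated purely as a citation to \cite{Melrose78} (with a pointer in the surrounding text to \cite[Chapter~24]{hormander2007III}), and the authors use it as a black box. Your sketch is a faithful outline of how the Melrose--Sj\"ostrand argument is actually structured in the literature --- ODE uniqueness away from the glancing set and at diffractive points, the normal form and finite-descent argument on the stratification $\mathcal{G}^2 \supset \mathcal{G}^3 \supset \cdots$ for the genuinely hard gliding/glancing case (where $\mathcal{G}^\infty = \emptyset$ is essential), a connectedness argument to globalize, and the compression map $j$ which identifies $\xi_d^\pm$ at hyperbolic reflections --- and you correctly recognize that the glancing case is not something one re-proves but cites. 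So your proposal is consistent with the paper: both treat \cite{Melrose78} as the source of the substantive content, and your extra exposition of the easy regimes is accurate.
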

\begin{rem}
One can show that \cref{ass:finite-contact} is always verified when $\Gamma$ is real-analytic.
\end{rem}
\Cref{ass:finite-contact} guarantees that every bicharacteristic admits a unique continuation 
as a compressed generalized bicharacteristic.  
Now we have all the tools to give a proper definition of the geometric control condition.

\begin{defi}[Geometric control condition]
\label{def:gcc}
Suppose that \Cref{ass:finite-contact} holds. We say that $\Gamma_0$ satisfies the geometric control condition
if there exists $T>0$ such that, for every compressed generalized bicharacteristic $\prescript{b}{}\gamma$ of $p$, there exists $s \in \R$ such that, writing $\prescript{b}{}\gamma(s) = (t(s), x(s), \omega(s), \xi(s)) \in \prescript{b}{}T^\ast M$, we have $0 < t(s) < T$, $x(s) \in \Gamma_0$, and $\prescript{b}{}\gamma(s) \in j(\H \cup (\mathcal{G} \setminus \mathcal{G}_d))$.
\end{defi}

\begin{rem}
In the absence of infinite-order contact with the boundary (\cref{ass:finite-contact}), the geometric control condition  \emph{characterizes}  exact observability of the wave equation with homogeneous Dirichlet boundary condition via the Neumann trace on $\Gamma_0$; see, in a more general setting, \cite{BarLeb92} for the sufficiency, and \cite{Burq97} for the necessity.
\end{rem}



Another important tool used in this work is the so-called \emph{Hautus test}, 
a resolvent-type condition that characterizes exact observability 
by means of frequency-domain estimates involving the system's generator;
see \cite{BurZwo04,Mil05} and also\cite[Chapter~6]{TucWei09book}.
The next lemma gives the exact observability inequality from  \cite{BarLeb92} in the form of a Hautus test that is most convenient for our analysis.
\begin{lemma}[Hautus test]
Suppose that \cref{as:dyn} holds, i.e., \cref{ass:finite-contact} is valid and $\Gamma_0$ satisfies the geometric control condition.
Then there exist constants $M,m>0$ such that
\label{lem:hautus}
\begin{equation}
\label{eq:hautus-sc}
\|w\|_{H^1_h(\Omega)} \leq h^{-1} M \|(-h^2 \Delta - 1)w\|_{L^2(\Omega)} +  mh  \|\partial_\nu w\|_{L^2(\Gamma_0)}, \quad w \in \dom(\Delta_D), \quad 0 < h \leq 1.
\end{equation}
\end{lemma}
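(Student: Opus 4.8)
The plan is to deduce the Hautus test \cref{eq:hautus-sc} from the exact observability of the Dirichlet wave equation through the Neumann trace on $\Gamma_0$, which holds by \cite[Theorem 3.8]{BarLeb92} under \cref{as:dyn}. First I would recall the precise statement of that observability result: since $\Gamma_0$ satisfies the geometric control condition and there is no infinite-order contact with the boundary, there exist $T > 0$ and $C > 0$ such that every solution $\phi$ to the wave equation $\partial_t^2 \phi - \Delta \phi = 0$ in $\Omega \times (0, T)$ with homogeneous Dirichlet boundary condition and $(\phi, \partial_t \phi)|_{t=0} = (\phi_0, \phi_1) \in (H^1_0(\Omega) \cap H^2(\Omega)) \times H^1_0(\Omega)$ satisfies
\begin{equation}
\|(\phi_0, \phi_1)\|^2_{H^1_0(\Omega) \times L^2(\Omega)} \leq C \int_0^T \int_{\Gamma_0} |\partial_\nu \phi|^2 \, \d \sigma \, \d t.
\end{equation}
(More precisely one works with the generator $\A_D$ of the Dirichlet wave semigroup on $H^1_0(\Omega) \times L^2(\Omega)$ and the observation operator $\phi \mapsto \partial_\nu \phi|_{\Gamma_0}$, which is admissible.)

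The second, and main, step is to translate this time-domain observability estimate into a frequency-domain resolvent estimate. This is the content of the ``Hautus test'' of \cite{BurZwo04,Mil05} (see also \cite[Chapter 6]{TucWei09book}): for a skew-adjoint generator, exact observability in some time $T$ is equivalent to the existence of constants $M, m > 0$ such that
\begin{equation}
\|v\|^2_{H^1_0(\Omega)} + \lambda^2 \|v\|^2_{L^2(\Omega)} \leq M^2 \|(-\Delta - \lambda^2)v\|^2_{L^2(\Omega)} \bigl(\text{with a suitable normalization}\bigr) + m^2 \|\partial_\nu v\|^2_{L^2(\Gamma_0)}
\end{equation}
for all $v \in \dom(\Delta_D)$ and all $\lambda \in \R$. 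The precise bookkeeping here is to write the second-order equation as a first-order system for $(v, \i \lambda v)$, observe that $(\A_D - \i \lambda)(v, \i \lambda v) = (0, (-\Delta - \lambda^2)v)$ in the energy space, apply the Burq--Zworski / Miller characterization, and then massage the two norm terms. One subtlety worth checking is the behaviour near $\lambda = 0$; since the Dirichlet Laplacian is strictly positive, $\lambda = 0$ is in the resolvent set and the estimate there is elementary, so combining with the high-frequency Hautus estimate gives a uniform bound for all real $\lambda$ (or, say, all $|\lambda| \geq 1$, which suffices after adjusting constants since $0 < h \leq 1$).

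The final step is purely cosmetic: set $h = 1/|\lambda|$, so that $-\Delta - \lambda^2 = h^{-2}(-h^2 \Delta - 1)$, and rewrite $\|v\|^2_{H^1_0(\Omega)} + \lambda^2\|v\|^2_{L^2(\Omega)}$ as $h^{-2}\|v\|^2_{H^1_h(\Omega)}$ (using that on $H^1_0(\Omega)$ the full $H^1$ norm and the gradient norm are equivalent by Poincaré, and that $\|v\|^2_{H^1_h(\Omega)} = h^2\|\nabla v\|^2_{L^2} + \|v\|^2_{L^2}$). Multiplying through by $h^2$ and taking square roots (using $\sqrt{a^2 + b^2} \leq a + b$ for nonnegative $a, b$) yields exactly \cref{eq:hautus-sc}, with $w$ in place of $v$ and the constants $M, m$ renamed. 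I expect the main obstacle to be stating the Burq--Zworski Hautus characterization in exactly the right form and carrying out the reduction of the second-order problem to the first-order skew-adjoint framework with the correct identification of the observation operator and its admissibility; none of this is deep, but it requires care with norms and normalizations. Everything else is routine algebra with the semiclassical rescaling.
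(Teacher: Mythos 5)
Your proposal follows essentially the same route as the paper: cite the Bardos--Lebeau--Rauch boundary observability for the Dirichlet wave system, invoke the abstract Hautus/resolvent characterization of exact observability for a skew-adjoint generator (via Tucsnak--Weiss / Burq--Zworski / Miller), specialize the Hautus inequality to vectors of the form $(w, \i\lambda w)$ to obtain the second-order estimate $\|\nabla w\|^2 + \lambda^2\|w\|^2 \leq M^2\|(-\Delta - \lambda^2)w\|^2 + m^2\|\partial_\nu w\|^2_{L^2(\Gamma_0)}$, then rescale with $\lambda = 1/h$ and multiply through by $h^2$. Your remark about the behaviour near $\lambda = 0$ is harmless over-caution, correctly resolved: the statement only requires $0 < h \leq 1$, i.e.\ $|\lambda| \geq 1$, and the abstract Hautus estimate already holds for all real $\lambda$, so nothing extra is needed.
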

\begin{proof}
Let $\H_0 \triangleq H^1_0(\Omega) \times L^2(\Omega) = \dom((-\Delta_D)^{1/2} \times L^2(\Omega)$
endowed with its natural energy inner product. The operator $-\A_0$, where
\begin{equation}
\A_0 \triangleq \begin{pmatrix} 0 & - 1 \\ -\Delta_D & 0 \end{pmatrix}, \quad \dom(\A_0) = \dom(\Delta_D) \times H^1_0(\Omega) =  (H^2(\Omega)\cap H_0^1(\Omega))\times H_0^1(\Omega),
\end{equation}
is skew-adjoint 
and generates a strongly continuous group $\{e^{-t\A_0}\}_{t \in \R}$ on $\H_0$ which is unitary and describes all finite-energy solutions to the wave equation with homogeneous Dirichlet conditions. Let $\C : \dom(\A_0) \to L^2(\Gamma_0)$ be the {observation operator} given by
\begin{equation}
\C \triangleq \begin{pmatrix}
\partial_\nu & 0
\end{pmatrix}.
\end{equation}
A standard differential multiplier argument shows that $\partial_\nu u$ 
is well-defined in $L^2(\Gamma \times (0, T))$, $T > 0$, for all finite-energy solutions to the wave equation with zero Dirichlet data; see \cite[Theorem 2.2]{Kom94book} or, alternatively, use \cref{lem:rellich}.
This means that $\C$ is \emph{admissible} for $-\A_0$ in the sense of \cite[Definition 4.3.1]{TucWei09book}. On the other hand, 
\cite[Theorem~3.8]{BarLeb92}, translated into system-theoretic language, asserts that the pair $(- \A_0,\C)$ is exactly observable. Therefore, \cite[Theorem~6.6.1]{TucWei09book} provides constants $M,m>0$ such that
\begin{equation}\label{eq:Hautus-abstract}
\|z\|^{2}_{\H_0} \leq  M^{2}\,\|(\i\lambda + \A_0)z\|^{2}_{\H_0} + m^{2}\,\|\C z\|_{L^2(\Gamma_0)}, \quad z = (u_0, u_1) \in \dom(\A_0), \quad \lambda \in \R.
\end{equation}
Now, choosing $z$ of the form $z = (w, \i \lambda w)$ in
in \cref{eq:Hautus-abstract} yields
\begin{equation}
\label{eq:res-omega}
\|\nabla w\|_{L^2(\Omega)^d}^2+\lambda^2\|w\|_{L^2(\Omega)}^2 \leq M^2\|(-\Delta -\lambda^2) w\|_{L^2(\Omega)}^2 + m^2\|\partial_\nu w\|_{L^2(\Gamma_0)}^2, \quad w \in \dom(\Delta_D), \quad \lambda \in \R.
\end{equation}
In the semiclassical setting we take $\lambda=1/h$ with $0 < h \leq 1$, and we rewrite \cref{eq:res-omega} as
\begin{equation}
\label{eq:res-omega-h}
\|\nabla w\|_{L^2(\Omega)^d}^2+h^{-2}\|w\|_{L^2(\Omega)}^2 \leq M^2 h^{-4} \|(-h^2\Delta-1)w\|_{L^2(\Omega)}^2 +  m^2 \|\partial_\nu w\|_{L^2(\Gamma_0)}^2,
\end{equation}
Finally, we multiply \cref{eq:res-omega-h} by $h^2$, recall from \cref{eq:sc-norm} the definition of the semiclassical norm $\|\cdot\|_{H^{1}_{h}(\Omega)}$ and readily get \cref{eq:hautus-sc} with a slight modification of the constants $m, M > 0$.
\end{proof}

\begin{rem}
If one allows infinite-order contact with the boundary, which can occur if $\Gamma$ is merely $\C^\infty$, 
uniqueness of compressed generalized bicharacteristics is lost and the propagation of singularities becomes more delicate. In this case, \cite[Theorem~3.4]{BarLeb92} provides exact observability 
but under stronger geometric conditions, for which, consequently,
our \cref{th:intro} remains valid.
\end{rem}

\bibliography{HBC.bib}
\bibliographystyle{alpha}

\end{document}